\def\DATE{\today}
\newtheorem{theorem}{Theorem}
\newtheorem{definition}[theorem]{Definition}
\newtheorem{corollary}[theorem]{Corollary}
\newtheorem{lemma}[theorem]{Lemma}
\newtheorem{proposition}[theorem]{Proposition}
\newtheorem{remark}[theorem]{Remark}
\newcommand\C{\mathbb{C}}
\newcommand\ch{\mathrm{char\:}}
\newcommand\chr{\mathrm{char\:}}
\newcommand\vp{\varphi}
\newcommand\og{\overline{\Gamma}}
\newcommand\ld{\ldots}
\newcommand\cd{\cdots}
\newcommand\op{\oplus}
\newcommand\la{\langle}
\newcommand\ra{\rangle}
\newcommand\hg{\widehat{G}}
\newcommand\Aut{\mathrm{Aut}\:}
\newcommand\Diagg{\mathrm{Diag}\:\Gamma}
\newcommand\ag{\mathrm{Aut}\;\mathfrak{g}}
\newcommand\dg{\mathrm{Der}\;\mathfrak{g}}
\newcommand\ad{\mathrm{ad}}
\newcommand\gr{\mathrm{gr}\;}
\newcommand\g{\mathfrak{g}}
\newcommand\G{\Gamma}
\newcommand\n{\mathfrak{n}}
\newcommand\K{\mathbb{K}}
\newcommand\Z{\mathbb{Z}}
\newcommand\pf{\noindent{\it Proof. }}
\newcommand\lr{\left\{ \begin{array}{l}}
\def\ds{\displaystyle}
\renewcommand{\mod}{\mathrm{mod}}
\title{Group gradings on filiform Lie algebras}
\author{Yuri Bahturin, Michel Goze, Elisabeth Remm}
\date{24 AV 5773}
\address{Department of Mathematics and Statistics
Memorial University of Newfoundland
St. John's, NL, A1C5S7. Universit\'{e} de Haute Alsace, LMIA, 4 rue des Fr\`{e}res Lumi\`{e}%
re, 68093 Mulhouse}
\email{bahturin@mun.ca, michel.goze@uha.fr, elisabeth.remm@uha.fr}
\begin{document}
\begin{abstract}We classify, up to isomorphism, gradings by abelian groups on nilpotent filiform Lie algebras of nonzero rank. In case of rank $0$, we describe conditions to obtain non trivial $\Z_k$-gradings.
\end{abstract}
\maketitle

\section{Introduction}\label{sII}

In this paper we are interested in describing (and where possible, classifying) abelian group gradings on nilpotent filiform Lie algebras of finite dimension over an algebraically closed field $\K$ of characteristic different from 2.

These algebras naturally arise in the study of affine structures of differential manifolds and their related Lie algebras. It was believed for some time that all nilpotent Lie algebras possess affine structures. However in 1992 Benoist \cite{Benoist} has produced an example of a 12-dimensional nonabelian nilpotent Lie algebra without affine structure. This algebra turned out to be filiform, that is of nil-index 11. On the other hand, it is well-known that the existence of an affine structure on a Lie algebra $\g$ over a field of characteristic 0 is guaranteed when $\g$ admits a $\Z$-grading with positive support. This makes it interesting to classify group gradings on filiform Lie algebras.

The class of filiform Lie algebras splits into two classes, as follows. Let us consider the automorphism group $A=\ag$ for $\g$. This is an algebraic group and we quickly show in this paper that $\ag$ is a solvable group, provided that $\dim\g\ge 4$. The maximal tori, that is the maximal subgroups of the form $(\K^\times)^\ell$, are conjugate in any algebraic group. The common rank $\ell$ of all maximal tori in $\ag$ is called the rank of the Lie algebra $\g$. Thus our two big classes are the Lie algebras of nonzero rank and Lie algebras of zero rank, also known as characteristically nilpotent.

Filiform Lie algebras of nonzero rank $\ell$, which is actually, either 1 or 2, have been described in \cite{G.K}. Our main result here is that each grading of any of the algebra $\g$ of nonzero rank is isomorphic to a factor-grading of one fixed grading by the group $\Z^\ell$. This grading is the eigenspace decomposition of $\g$ with respect to the action of a distinguished maximal torus.

In the case $\ell=2$, if $n=\dim\g$ is even, for each value $n$  there are just two pairwise non-isomorphic algebras $L_n$ and $Q_n$. If $n$ is odd, there is, up to isomorphism, only one algebra $L_n$. In the case $\ell=1$ there are two families of algebras usually denoted by $A_n^p$ and $B_n^p$ (see below in Section \ref{sCFLA}). For these algebras, we give a complete classification of all abelian group gradings, up to equivalence. Two gradings on $\g$ are equivalent if there is an automorphism of $\g$ which maps each component of the first grading to a component of the second. We also give the details of the gradings, such as the universal group and the saturated torus  of each grading. There are $\dfrac{(n-1)(n+2)}{2}$ pairwise inequivalent gradings in the case of $L_n$, $\dfrac{(n-1)(n+2)}{2}-1$ in the case of $Q_n$, $n+p-1$ in the case of $A_n^p$ and $n+p-2$ in the case of $B_n^p$.

The class of characteristically nilpotent algebras also splits into two subclasses: class of algebras which admit a nontrivial grading and class of algebras which does not admit any nontrivial gradings. Before this paper it was generally believed that characteristically nilpotent filiform Lie algebras do not admit any gradings because all derivations of such algebras are nilpotent. Actually, based on the description of characteristically nilpotent algebras in term of second cohomology group of $L_n$ or $Q_n$, for each $k\ge 2$, we can find an infinite family of characteristically nilpotent algebras which admit a nontrivial $\Z_k$-grading.

\section{Preliminaries}\label{sI}
In what follows $\K$ is an algebraically closed field of characteristic different from 2. For more details on group gradings of algebras see recent monograph \cite{EK}.

Let $\g$ be a Lie algebra over $\K$ and $S$ a set. An $S$-grading $\Gamma$ of $\g$ with support $S$ is a decomposition $\Gamma: \g= \bigoplus_{s \in S} \g_{s}$ of $\g$ as the sum on nonzero vector subspaces $\g_s$ satisfying the following condition. For any $s_1,s_2\in S$ such that $[\g_{s_1},\g_{s_2}]\neq 0$ there is $\mu(s_1,s_2)\in S$ such that  $[\g_{s_1},\g_{s_2}]\subset \g_{\mu(s_1,s_2)}$. If $\Gamma': \g= \bigoplus_{s' \in S'} \g_{s'}'$ is another grading of $\g$, we say that $\Gamma$ is \textit{equivalent} to $\Gamma'$ if there is an automorphism $\vp\in\Aut \g$ and a bijection $\sigma:S\to S'$ such that $\vp(\g_s)=\g_{\sigma(s)}'$. It follows that $\sigma(\mu(s_1,s_2))=\mu(\sigma(s_1),\sigma(s_2))$. If $S$ is a subset of a group $G$ such that $\mu(s_1,s_2)=s_1s_2$ in $G$, we say that $\Gamma$ is a \textit{group grading by the group $G$ with support $S$}. Such a group $G$ is not defined uniquely, but for any group grading there a universal grading group $U(\Gamma)$ such that any other grading group  $G$ of $\Gamma$ is a factor group of $U(\Gamma)$. The universal group is given in terms of generators and defining relations if one chooses $S$ as the set of generators and all the above equations $s_1s_2=\mu(s_1, s_2)$ as the set of defining relations.

Two group gradings $\Gamma$ and $\Gamma'$ of a Lie algebra $\g$ by groups $G$ and $G'$ with supports $S$ and $S'$ are called\textit{ weakly isomorphic} if they are equivalent, as above, and the map $\sigma:S\to S'$ extends to the isomorphism of groups $G$ to $G'$. The strongest relation is the \textit{isomorphism} of $G$-gradings. In this case both $\Gamma$ and $\Gamma'$ are gradings by the same group, they have the same support and the isomorphism of groups $\sigma$ is identity. As a result, we have $\vp(\g_\gamma)=\g'_\gamma$, for any $\gamma\in G$. In the case of group $G$-gradings with support $S$ we will write $\Gamma: \g= \bigoplus_{\gamma \in G} \g_{\gamma}$, assuming that $\g_\gamma=\{ 0\}$ if $\gamma\in G\setminus S$.

\medskip
Let   $\Gamma: \g= \bigoplus_{\gamma \in G} \g_{\gamma}$ and $\varepsilon:G\to K$ be a homomorphism of groups. We set $\g'_\tau=\bigoplus_{\varepsilon(\gamma)=\tau}\g_\gamma$, for any $\tau\in K$. Then we obtain the image-grading $\varepsilon(\Gamma)=\bigoplus_{\tau \in K} \g_{\tau}'$. If $\varepsilon$ is an onto homomorphism, we say that $\varepsilon(\Gamma)$ is a\textit{ factor-grading} of the grading $\Gamma$. Sometimes, people call $\varepsilon(\Gamma)$ a\textit{ coarsening} of $\Gamma$. A \textit{refinement} $\Gamma'$ of $\Gamma$ is an $H$-grading of $\g$ such that for any $\rho \in H$, there exists (a unique!) $\gamma \in G$ with $\g_{\rho}' \subset \g_{\gamma}$. In this case we obviously have the following:
\begin{equation}\label{ey1}
 \g_\gamma=\bigoplus_{\g_\rho'\subset\g_\gamma}\g_\rho'.
 \end{equation}
 A grading $\Gamma$ of $\g$ is called a \textit{fine grading} if doesn't admit proper refinements.

\begin{remark}If $\Gamma'$ is a refinement of $\Gamma$ then $\Gamma$ viewed as a $U(\Gamma)$-grading is a factor grading of  $\Gamma'$ viewed as a $U(\Gamma')$-grading.
\end{remark}
Indeed, if $s'$ is in the support $S'$ of $\Gamma'$ then $\g_{s'}'\subset\g_{s}$, for some uniquely defined $s$ in the support $S$ of $\Gamma$. Thus we have a well defined mapping $\psi:S'\to S:s'\to s$. Now we have a defining relation $s'_1s'_2=s'_3$ in the group $U(\Gamma')$ provided that $[\g_{s_1'}',\g_{s_2'}']=\g_{s_3'}'\neq\{ 0\}$. If $s_i=\psi(s_i')$, $i=1,2,3$ then $\{ 0\}\neq [\g_{s_1'}',\g_{s_2'}']\subset[\g_{s_1},\g_{s_2}]\subset\g_{s_3}$. Thus $\g_{s_3'}'$ has nontrivial intersection with $\g_{s_3}$ and so $\g_{s_3'}'\subset \g_{s_3}$, proving that $\psi(s_3')=s_3$. Now from what we have just written, it follows that $s_1s_2=s_3$ is a defining relation in $U(\Gamma)$. Thus, $\psi(s_1's_2')=\psi(s_3')=s_3=s_1s_2=\psi(s_1')\psi(s_2')$ and $\psi$ extends to a homomorphism of groups $U(\Gamma')$ onto $U(\Gamma)$. Finally, using equation (\ref{ey1}), we find that $\g_s=\bigoplus_{\psi(s')=s}\g_{s'}'$ because $\g_{s'}'\subset\g_s$ if and only if $\psi(s')=s$. We now have $\Gamma=\psi(\Gamma')$.

In the case of Lie algebras it is natural to assume that all groups involved in the group gradings are abelian. In fact, for many Lie algebras, like finite-dimensional semisimple ones, this is a satisfied (see \cite{BZ}) in the sense that the partial function $\mu:S\times S\to S$ appearing in the definition of the grading is symmetric or that the elements of the support in the group grading commute. So in what follows we always deal with gradings by abelian groups.  In addition, when we study finite-dimensional algebras, the supports of the gradings are finite sets, so our groups are finitely generated.

Now given a group $G$, we denote by $\hg$ the group of (1-dimensional) characters of $G$, that is the group of all homomorphisms $\chi:G\to F^\times$ where $F^\times$ is the multiplicative group of the field $F$.  If $\Gamma: \g=\bigoplus_{\gamma\in G}\g_\gamma$ is a grading of a Lie algebra $\g$ with a grading group $G$, there is an action of $\hg$ by semisimple automorphisms of $\g$ given on the homogeneous elements of $\g$ by $\chi\ast x=\chi(\gamma) x$ where $\chi\in\hg$ and $x\in\g_{\gamma}$. If $G$ is generated by the support $S$ of $\Gamma$, different characters act differently. Indeed, assume $\chi_1,\chi_2\in\hg$ are such that $\chi_1\ast x=\chi_2\ast x$, for any $x\in\g$. Choose any $s\in S$ and $0\neq x\in \g_s$. In this case $\chi_1(s)x=\chi_1\ast x=\chi_2\ast x=\chi_2(s)x$. As a result, $\chi_1(s)=\chi_2(s)$, for any $s\in S$. Since $\chi_1$ and $\chi_2$ are homomorphisms and $G$ is generated by $S$, we have $\chi_1=\chi_2$, as claimed. This allows us, in this important case, to view $\hg$ as a subgroup of $\ag$. We will view $\ag$ as an algebraic group. When we study finite-dimensional algebras, then $G$ is finitely generated abelian and so $\hg$ is the group of characters of a finitely generated abelian group. If $G\cong \Z^m\times A$, where $m$ is an integer, $m\ge 0$, and $A$ a finite abelian group, then $\hg\cong(F^\times)^m\times\widehat{A}$. Such subgroups of algebraic groups, consisting of semisimple elements, are called \textit{quasitori}.

A quasitorus is a generalization of the notion of the torus, which is an algebraic subgroup of $\ag$ isomorphic to  $\hg\cong(F^\times)^m$, for some $m$, called the dimension of the torus. A torus, which is not contained in a larger torus is called \textit{maximal}. The following result is classical.

\begin{theorem}\label{tmaxtor} In any algebraic group any two maximal tori are conjugate by an inner automorphism.
\end{theorem}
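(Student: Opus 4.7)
The plan is the classical two-step reduction to the solvable case, powered by the Borel fixed point theorem.

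First, I reduce to a connected algebraic group. Any torus is connected by definition, so every maximal torus $T$ of $G$ lies in the identity component $G^\circ$ and remains maximal there; moreover, any inner automorphism of $G^\circ$ is the restriction of an inner automorphism of $G$. So I may assume that $G$ is connected from the outset.

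Second, I would establish the conjugacy of Borel subgroups (maximal closed connected solvable subgroups). The essential non-formal input is that for any Borel subgroup $B$ the homogeneous space $G/B$ is a projective variety. Given a second Borel subgroup $B'$, I let $B'$ act on $G/B$ by left translation. By the Borel fixed point theorem a connected solvable group acting regularly on a nonempty complete variety has a fixed point, which here is a coset $gB$ with $B' \subseteq gBg^{-1}$. Maximality of $B'$ then forces equality, and $B' = gBg^{-1}$.

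Third, given two maximal tori $T_1, T_2$ of $G$, I enlarge each to a Borel subgroup $B_i$ by extending to a maximal closed connected solvable subgroup. After conjugating by the element produced in the previous step, I may assume $T_1, T_2 \subseteq B$ for a single Borel $B$. The task is then reduced to showing that any two maximal tori in a connected solvable algebraic group are conjugate, which I would prove by induction on $\dim B$. The inductive step peels off a minimal closed connected normal subgroup $N \subseteq B_u$ (a vector group), applies the hypothesis to $B/N$ to conjugate the images of $T_1, T_2$, and then lifts by finding an element of $N$ that carries one lift to the other.

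The main obstacle is the lifting in the last step: one must show that a suitable $1$-cocycle of $T$ with values in $N$ is a coboundary, which relies on $T$ acting semisimply on $N$ via characters and on the absence of $T$-fixed vectors in the relevant quotient. The other delicate ingredient, completeness of $G/B$, is usually obtained by embedding $G$ into some $\mathrm{GL}(V)$ and identifying $G/B$ with a closed orbit in a product of Grassmannians. Both points are standard but nontrivial, which is why the authors simply invoke the result.
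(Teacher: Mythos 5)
The paper offers no proof of this statement at all: it is quoted as ``classical'' and used as a black box, so there is no argument of the authors' to compare yours against. Your sketch is the standard textbook proof (reduction to the identity component, conjugacy of Borel subgroups via completeness of $G/B$ and the Borel fixed point theorem, then conjugacy of maximal tori inside a connected solvable group by induction on dimension with a cocycle--coboundary lifting), and its logical skeleton is correct; the two technical inputs you defer (projectivity of $G/B$ and the vanishing of the relevant $1$-cocycle, including the easy case where the torus acts trivially on the peeled-off vector group, where one instead observes that the semisimple elements of $T_1\times N$ all lie in $T_1$) are exactly the standard ones and are correctly identified. In short: an accurate outline of the classical argument, appropriate for a result the paper itself only cites.
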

 Another well-known result, is often attributed to Platonov \cite{Platonof} (but see also \cite{SS})

\begin{theorem}\label{tquasi} Any quasi-torus is isomorphic to a subgroup in the normalizer of a maximal torus.
\end{theorem}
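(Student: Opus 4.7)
The goal is to show that some $G$-conjugate of $Q$ lies in the normalizer $N_G(T)$ of a maximal torus $T$; the required isomorphism then follows because conjugation is an algebraic group isomorphism. The argument proceeds by two successive conjugations, corresponding to the decomposition of $Q$ into its identity component $Q^\circ$ (which is a torus, being connected, abelian, and consisting of semisimple elements) and the finite quotient $\Pi=Q/Q^\circ$.

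For the first conjugation, I would fix a maximal torus $T_0$ of $G$, embed $Q^\circ$ into some maximal torus of $G$, and use Theorem \ref{tmaxtor} to conjugate $Q$ so that $Q^\circ\subset T_0$. Set $M=C_G(Q^\circ)$. Since $Q$ is abelian, $Q\subset M$. Any torus of $G$ containing $Q^\circ$ lies in $M$, so $T_0\subset M$ and $T_0$ is a maximal torus of $M^\circ$. Because $Q^\circ$ is a central torus of $M^\circ$, it is contained in every maximal torus of $M^\circ$: otherwise one could enlarge such a maximal torus by joining it with $Q^\circ$, contradicting maximality.

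For the second conjugation, I would use that $Q$ acts on $M^\circ$ by conjugation and, since $Q^\circ$ is central in $M^\circ$, this action factors through the finite group $\Pi$. Thus $\Pi$ permutes the variety $\mathcal{T}$ of maximal tori of $M^\circ$, which by Theorem \ref{tmaxtor} applied inside $M^\circ$ is the homogeneous space $M^\circ/N_{M^\circ}(T_0)$. A $\Pi$-fixed point $T'\in\mathcal{T}$ is exactly a maximal torus of $M^\circ$ normalized by $Q$; since $T'$ contains $Q^\circ$ and is maximal in $M^\circ$, it is also maximal in $G$, yielding $Q\subset N_G(T')$ and completing the argument.

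The step I expect to be the main obstacle is producing the $\Pi$-fixed maximal torus in $\mathcal{T}$. The standard device is to take a Levi decomposition $M^\circ=L\ltimes R_u(M^\circ)$, which reduces the problem to the connected reductive group $L$ (every maximal torus of $M^\circ$ projects isomorphically onto one of $L$), and then to apply Steinberg's theorem to the effect that a finite commuting family of semisimple automorphisms of a connected reductive group fixes a common maximal torus; this applies to the image of $\Pi$ in $\Aut L$. This reductive-group fixed-point step is the substance of Platonov's argument in \cite{Platonof}, and I would invoke that reference at this point rather than reproduce the technical details.
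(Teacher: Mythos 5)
The paper does not prove this statement at all: Theorem \ref{tquasi} is quoted as a classical result, attributed to Platonov \cite{Platonof} with a further pointer to Springer--Steinberg \cite{SS}, so there is no in-paper argument to compare yours against. Your sketch is a correct outline of the standard proof from those sources: conjugating so that $Q^\circ$ lies in a fixed maximal torus $T_0$ (via Theorem \ref{tmaxtor}), passing to $M=C_G(Q^\circ)$, observing that the central torus $Q^\circ$ lies in every maximal torus of $M^\circ$ and that maximal tori of $M^\circ$ containing $T_0$-conjugates are maximal in $G$, and then reducing everything to the existence of a maximal torus of $M^\circ$ stable under the finite group $\Pi=Q/Q^\circ$. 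That last step is the entire substance of the theorem, and you defer it to the literature exactly as the paper does, so your write-up is no less complete than the paper's. Two caveats are worth recording. First, your proposed route to the fixed-point step via a Levi decomposition $M^\circ=L\ltimes R_u(M^\circ)$ is a characteristic-zero device; the paper's ground field is only assumed algebraically closed of characteristic $\neq 2$, and Levi decompositions can fail in positive characteristic. The argument in \cite{SS} (Theorem 7.2 there) avoids this by working directly with a diagonalizable group acting on a connected group, using rigidity of tori and the Borel fixed-point theorem, and is valid in all characteristics; if you want your sketch to cover the paper's setting you should cite that form of the result rather than the Levi-plus-Steinberg reduction. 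Second, your conclusion is actually stronger than the statement: you produce a $G$-conjugate of $Q$ inside $N_G(T')$, whereas the theorem only asks for an abstract isomorphism onto a subgroup of such a normalizer; this stronger conjugacy form is in fact what the paper uses immediately afterwards, so no harm is done.
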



Thus, if we find a maximal torus $D$ in $\ag$ equal to its normalizer in $\ag$, then for any grading of $\g$ by a finitely generated abelian group $G$ there is $\vp\in \ag$ such that  $\vp\hg\vp^{-1}\subset D$.

Now every time we have a quasitorus $Q$ in $\ag$ there is root space decomposition of $\g$ with roots from the group of characters $\widehat{Q}$ for the group $Q$, the root subspace for $\lambda\in \widehat{Q}$ given by
$$\g_\lambda=\{x\in\g\;|\; \alpha(x)=\lambda(\alpha)x\mbox{ for any }\alpha\in Q\}.$$
If $Q\subset D$ then, by duality, $\widehat{Q}$ is a factor group of $\widehat{D}\cong \mathbb{Z}^m$ where $m=\dim D$. The root space decomposition by $D$ is the refinement of the root space decomposition by $Q$ and so the grading by $\widehat{Q}$ is a factor-grading of a grading by $\widehat{D}\cong\mathbb{Z}^m$.

Now assume that  we deal with a grading of $\g$ by a finitely generated abelian group $G$,  $\Gamma: \g=\bigoplus_{\gamma\in G}\g_\gamma$. Assume that $p=\ch F$ and write $G=G_p\times G_{p'}$, where $G_p$ is the Sylow $p$-subroup and $G_{p'}$ its complement in $G$ that has no elements of order $p$. If $\ch F=0$ then $G=G_{p'}$.  Let us consider the quasitorus $\hg\subset \ag$. Then there is $\vp\in\ag$ such that $\vp\hg\vp^{-1}\subset D$.  Let us switch to another $G$-grading $\vp(\Gamma):\g=\bigoplus_{\gamma\in G}\vp(\g_\gamma)$. The action of $\hg$ on $\g$ induced by $\vp(\Gamma)$ gives rise to another copy of $\hg$ in $\ag$, namely, $\vp\hg\vp^{-1}$ and now this subgroup is a subgroup in $D$. Replacing $D$ by another maximal torus $\vp^{-1}D\vp$ we may assume from the very beginning that $\hg\subset D$. Then, the root decomposition by  $D$ is a refinement of the root decomposition under the action of  $\hg$. Thus the grading $\Gamma':\g=\bigoplus_{\lambda\in\widehat{\hg}}\g_\lambda$ induced by this root decomposition resulting from  the action of $\hg$  is a factor-grading of the $\widehat{D}\cong\mathbb{Z}^m $ grading of $\g$ induced by by the action of the torus $D$.

Now the connection between the original grading $\Gamma$ and $\Gamma'$ is as follows. Let us consider the map $\psi: G\to \widehat{\hg}$ given by $\psi_g(\chi)=\chi(g)$. Clearly, this is a homomorphism of groups. The subgroup $G_p\subset\mathrm{Ker}\:\psi$ because $F$ has no nontrivial $p^{\mathrm{th}}$-roots of $1$. On the other hand, a standard manipulation with direct products and roots of 1 shows that for every nontrivial element $g\in G_{p'}$ there is a character of $ G_{p'}$, hence of $G$, which does not vanish on $g$. Thus $G_p=\mathrm{Ker}\:\psi$. Since every character is defined on the generators of the group $G_{p'}$ and the number of possible values of the images of generators in $F^\times$ does not exceed the order of generators,  $|\widehat{\hg}|\le |G_{p'}|$, the homomorphism $\psi$ is onto. Finally, since any $\lambda\in\widehat{\hg}$ is the image of a $\gamma\in G$ under the homomorphism $\psi$, we have
$$\g_\lambda=\g_{\psi(g)}=\{x\;|\;\chi(x)=\chi(g)x\}=\sum_{\psi(\gamma)=\lambda}\g_\gamma.$$
Thus the factor-grading of our original grading $\Gamma$ by the Sylow $p$-subgroup of $G$ turns out to be isomorphic to a factor-grading of a $\mathbb{Z}^m=\widehat{D}$-grading induced by the action of a maximal torus $D$ of $\ag$.

In the case where $G$ has no elements of order $p=\ch F$, we have that the original grading is isomorphic to a factor grading of a grading induced from the action of a maximal torus. Usually, this is some ``standard'' torus, and the grading induced by its action is also ``standard''.

We summarize the above discussion as follows.

\begin{theorem}\label{tMTp} Let $\Gamma: \g=\bigoplus_{g\in G}\g_g$ be a grading of a finite-dimensional algebra $\g$ over an algebraically closed field $\K$ by a finitely generated abelian group $G$. If $\chr \K=p>0$, let $G_p$ denote the Sylow $p$-subgroup of $G$. Consider the automorphism group $A=\ag$ of $\g$ and assume a maximal torus $D$ of $A$, of dimension $m$, equal to its normalizer in $A$. Then the factor-grading $\Gamma/G_p$ is isomorphic to a factor-grading of the standard $\Z^m$-grading of $\g$ induced by the action of $D$ on $\g$.
\end{theorem}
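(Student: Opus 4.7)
The plan is to execute the argument that has already been sketched in the running discussion preceding the statement, treating it now as a theorem and checking that the pieces fit together cleanly. The key translation is between three objects: the given $G$-grading $\Gamma$, the quasitorus $\hg\subset A$ it induces, and the eigenspace decomposition of $\g$ under a maximal torus $D$.

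First I would recall the bijection between $G$-gradings and actions of $\hg$ by semisimple automorphisms (described just after the definition of $\hg$ in the Preliminaries). This realizes $\hg$ as a quasitorus inside $A=\ag$. Apply Theorem~\ref{tquasi} to find $\psi\in A$ conjugating $\hg$ into the normalizer of \emph{some} maximal torus; because all maximal tori are conjugate (Theorem~\ref{tmaxtor}) and $D$ is assumed self-normalizing, I can, after conjugating $D$ by a suitable element (which changes nothing, as the conjugate torus is again maximal and self-normalizing and the induced $\Z^m$-grading differs by an automorphism of $\g$), assume from the outset that $\hg\subseteq D$. This gives a refinement of the $\widehat{\hg}$-eigenspace decomposition $\Gamma'$ by the $\widehat{D}\cong\Z^m$-eigenspace decomposition, and hence $\Gamma'$ is a factor-grading of the standard $\Z^m$-grading.

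Next I would identify $\Gamma'$ with the factor $\Gamma/G_p$. Define $\psi\colon G\to\widehat{\hg}$ by $\psi(g)(\chi)=\chi(g)$. A direct check shows $G_p\subseteq\ker\psi$ since $\K^\times$ has no nontrivial $p$-th roots of unity, and conversely any nontrivial element of $G_{p'}$ is detected by some character of $G_{p'}\subseteq G$, so $\ker\psi=G_p$. Because characters of a finitely generated abelian group are determined by their values on generators, and the number of admissible values is bounded by the orders of these generators, $|\widehat{\hg}|\le|G_{p'}|$, which combined with injectivity on $G/G_p$ forces $\psi$ to be surjective onto $\widehat{\hg}$. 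For each $\lambda\in\widehat{\hg}$ the eigenspace $\g_\lambda$ equals $\bigoplus_{\psi(\gamma)=\lambda}\g_\gamma$, because every $x\in\g_\gamma$ satisfies $\chi\ast x=\chi(\gamma)x=\psi(\gamma)(\chi)x$. This is exactly the statement that $\Gamma'$, viewed as a grading by $G/G_p$ via the isomorphism $\psi$, coincides with the factor-grading $\Gamma/G_p$.

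Combining the two steps, $\Gamma/G_p\cong\Gamma'$ is a factor-grading of the $\Z^m$-grading induced by $D$, as required. The only subtle point is the interplay between the two conjugations (one for $\psi$ produced by Platonov, one for the maximal torus): the self-normalizing hypothesis on $D$ is exactly what makes these compatible, since after conjugating $\hg$ inside the normalizer of some torus $D'$, self-normalization gives $\hg\subseteq D'$ itself, and then conjugating $D'$ back to $D$ does not change the isomorphism type of the grading. This compatibility issue, rather than any computation, is where care is needed; everything else is bookkeeping of characters and eigenspaces.
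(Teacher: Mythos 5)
Your proposal follows essentially the same route as the paper's own argument (which is the discussion in the Preliminaries that Theorem~\ref{tMTp} summarizes): realize $\hg$ as a quasitorus in $\ag$, conjugate it into the self-normalizing maximal torus $D$ via Theorem~\ref{tquasi}, observe that the $\widehat{D}\cong\Z^m$ eigenspace decomposition refines the $\widehat{\hg}$ one, and identify the latter with $\Gamma/G_p$ through the evaluation homomorphism $\psi\colon G\to\widehat{\hg}$ with kernel $G_p$. The steps, including the counting argument for surjectivity of $\psi$ and the replacement of $D$ by a conjugate torus, match the paper's reasoning, so no further comparison is needed.
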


An important particular case is the following.

\begin{theorem}\label{tMTnp} Let $\Gamma: \g=\bigoplus_{g\in G}\g_g$ be a grading of a finite-dimensional algebra $\g$ over a algebraically closed field $\K$ by a finitely generated abelian group $G$. If $\chr \K=p>0$, assume $G$ has no elements of order $p$. Consider the automorphism group $A=\ag$ of $\g$ and assume a maximal torus $D$ of $A$, of dimension $m$, equal to its normalizer in $A$. Then $\Gamma$ is isomorphic to a factor-grading of the standard $\Z^m$-grading of $\g$ induced by the action of $D$ on $\g$.
\end{theorem}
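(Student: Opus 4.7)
The plan is to derive this as an immediate specialization of Theorem \ref{tMTp}. The observation driving everything is that the hypothesis ``$G$ has no elements of order $p$'' forces the Sylow $p$-subgroup $G_p$ to be trivial, and therefore the factor-grading $\Gamma/G_p$ appearing in Theorem \ref{tMTp} is literally $\Gamma$. The conclusion we want then reads off directly from the conclusion of Theorem \ref{tMTp}.

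To justify $G_p=\{1\}$ cleanly, I would write $G\cong\Z^r\times A$ with $A$ finite abelian, note that the Sylow $p$-subgroup of $G$ is precisely the $p$-primary component of $A$, and use Cauchy's theorem: if this component were nontrivial it would contain an element of order $p$, contradicting the hypothesis. In characteristic $0$ there is nothing to check, as the statement of Theorem \ref{tMTp} reduces to the same conclusion with no Sylow subgroup to quotient out.

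Alternatively, and perhaps more transparently, one can simply re-run the argument preceding Theorem \ref{tMTp} with the simplification that the kernel of the map $\psi:G\to\widehat{\hg}$ given by $\psi_g(\chi)=\chi(g)$ is now trivial. Indeed, form the quasitorus $\hg\subset\ag$ arising from $\Gamma$, apply Theorem \ref{tquasi} to place a conjugate $\vp\hg\vp^{-1}$ inside the normalizer of the maximal torus $D$, and use the hypothesis that $D$ equals its own normalizer to conclude $\vp\hg\vp^{-1}\subset D$. Replacing $\Gamma$ by the equivalent grading $\vp(\Gamma)$, we may assume $\hg\subset D$ outright. Since $\K^\times$ has no elements of order $p$, the only obstruction to $\psi$ being injective (identified in the discussion before Theorem \ref{tMTp}) is the presence of $p$-torsion in $G$, which is now ruled out; hence $\psi:G\to\widehat{\hg}$ is an isomorphism and $\Gamma$ coincides with the root-space decomposition under the $\hg$-action, which in turn is a factor-grading of the $\widehat{D}\cong\Z^m$-grading induced by $D$.

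The main point to verify carefully is precisely the injectivity of $\psi$: that for each nontrivial $g\in G$ there is a character of $G$ not vanishing on $g$. This follows from the standard fact that the characters of a finitely generated abelian group without $p$-torsion (valued in $\K^\times$, with $\K$ algebraically closed) separate points, since for each direct factor $\Z$ or $\Z/n\Z$ with $\gcd(n,p)=1$ one has enough roots of unity in $\K^\times$. With this in hand, no further computation is required and the theorem reduces to bookkeeping on top of Theorem \ref{tMTp}.
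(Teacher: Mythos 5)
Your proposal is correct and matches the paper's treatment: the paper presents this theorem as ``an important particular case'' of Theorem \ref{tMTp}, with the implicit justification being exactly your first observation that the hypothesis forces $G_p$ to be trivial, so that $\Gamma/G_p=\Gamma$. Your alternative route of re-running the argument with $\psi$ injective is also sound, but it only retraces the discussion the paper already carried out before stating Theorem \ref{tMTp}, so no genuinely new idea is needed or supplied.
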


Even more special is the following.
\begin{theorem}\label{tMTz} Let $\Gamma: \g=\bigoplus_{g\in G}\g_g$ be a grading of a finite-dimensional algebra $\g$ over a algebraically closed field of characteristic zero $\K$ by a finitely generated abelian group $G$. Consider the automorphism group $A=\ag$ of $\g$ and assume a maximal torus $D$ of $A$, of dimension $m$, equal to its normalizer in $A$. Then $\Gamma$ is isomorphic to a factor-grading of the standard $\Z^m$-grading of $\g$ induced by the action of $D$ on $\g$.
\end{theorem}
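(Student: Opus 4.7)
The plan is to observe that Theorem~\ref{tMTz} is the specialization of the two preceding theorems to the characteristic zero setting, and that virtually all the content has already been delivered in the discussion culminating in Theorem~\ref{tMTp}. Concretely, I would first note that when $\chr\K = 0$, the hypothesis ``$G$ has no elements of order $p$'' in Theorem~\ref{tMTnp} is vacuous: there is no bad prime to worry about, and equivalently the Sylow $p$-subgroup $G_p$ appearing in Theorem~\ref{tMTp} is trivial, so the factor grading $\Gamma/G_p$ coincides with $\Gamma$ itself. Thus Theorem~\ref{tMTz} follows as an immediate corollary of either of the two preceding theorems.

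For completeness, I would record the one-line reduction that makes this work. Starting from $\Gamma:\g=\bigoplus_{g\in G}\g_g$, form the quasitorus $\hg\subset\ag$ via the action $\chi\ast x=\chi(g)x$ on $\g_g$. By Theorem~\ref{tquasi}, some conjugate $\vp\hg\vp^{-1}$ lies in the normalizer of a maximal torus, and by hypothesis this normalizer equals $D$ itself. Replacing $\Gamma$ with the equivalent grading $\vp(\Gamma)$ we may assume $\hg\subset D$, so that the $\widehat{D}\cong \Z^m$-grading (the root decomposition under $D$) is a refinement of the grading $\Gamma':\g=\bigoplus_{\lambda\in\widehat{\hg}}\g_\lambda$ induced by $\hg$.

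It remains to identify $\Gamma$ with $\Gamma'$ up to isomorphism of grading. Here one uses the evaluation map $\psi:G\to \widehat{\hg}$, $\psi_g(\chi)=\chi(g)$. The key point, and the only place where characteristic matters, is that in characteristic zero $\psi$ is an \emph{isomorphism} for any finitely generated abelian group $G$: the field $\K$ is algebraically closed of characteristic $0$, hence $\K^\times$ contains all roots of unity, so by Pontryagin duality for finitely generated abelian groups the characters of $G$ separate its points and exhaust $\widehat{\hg}$. Under this identification,
\[
\g_\lambda \;=\; \sum_{\psi(g)=\lambda}\g_g
\]
collapses to the single summand $\g_g$ with $\psi(g)=\lambda$, and $\Gamma\cong \Gamma'$ as $G$-gradings.

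The only genuine obstacle, and it has already been handled in the text preceding Theorem~\ref{tMTp}, is the duality step: making sure that the passage from the original $G$-grading to the $\widehat{\hg}$-grading loses no information. In positive characteristic one loses precisely the Sylow $p$-part $G_p$, which is why Theorems~\ref{tMTp} and \ref{tMTnp} are stated separately; in characteristic zero that obstruction evaporates, giving the clean statement of Theorem~\ref{tMTz}.
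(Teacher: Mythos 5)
Your proposal is correct and follows the paper's own route exactly: the paper gives no separate argument for Theorem~\ref{tMTz}, presenting it as the ``even more special'' case of Theorem~\ref{tMTp} (equivalently Theorem~\ref{tMTnp}) in which $G_p$ is trivial, with all the real work done in the discussion preceding Theorem~\ref{tMTp} --- conjugating $\hg$ into $D$ via Theorem~\ref{tquasi} and checking that the evaluation map $\psi:G\to\widehat{\hg}$ is an isomorphism in characteristic zero. Your recap of that discussion, including the identification $\g_\lambda=\sum_{\psi(g)=\lambda}\g_g$ collapsing to a single summand, matches the paper's reasoning.
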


Since we completely classify gradings up to equivalence for certain classes of algebras, we quote some more results from \cite{EK}. Given a grading $\Gamma: \g= \bigoplus_{s \in S} \g_{s}$ of an algebra $\g$, the subgroup of the group $\ag$ permuting the components of $\Gamma$ is called the \textit{automorphism group} of the grading $\Gamma$ and denoted by $\Aut\Gamma$. Each $\vp\in\Aut\Gamma$ defines a bijection on the support $S$ of the grading: if $\vp(\g_s)=\g_{s'}$ then $s\mapsto s'$ is the desired permutation $\sigma(\vp)$, an element of the symmetric group $\mathrm{Sym}\; S$. The kernel of the homomorphism $\vp\mapsto\sigma(\vp)$ is called the \textit{stabilizer} of the grading $\Gamma$, denoted by $\mathrm{Stab}\;\Gamma$. Finally a subgroup of $\mathrm{Stab}\;\Gamma$, whose elements are scalar maps on each graded component of $\Gamma$ is called the \textit{diagonal group} of $\Gamma$ and denoted by $\Diagg$.

\medskip

\begin{definition}\label{dSQT} Let $Q\subset \Aut\Gamma$ be a quasitorus. Let $\Gamma$ be the eigenspace decomposition of $\g$ with respect to $Q$. Then the quasitorus $\Diagg$ in $\Aut\Gamma$ is called the saturation of $Q$. We always have $Q\subset\Diagg$. If $Q=\Diagg$ then we say that $Q$ is a \emph{saturated} quasitorus.
\end{definition}

A quasitorus $Q$ is saturated if and only is the group $\mathcal{X}(Q)$ of algebraic characters of $Q$ is $U(\Gamma)$, the universal group of $\Gamma$.

\begin{proposition}
The equivalence classes of gradings on $\g$ are in one-to-one correspondence with the conjugacy classes of saturated quasitori in $\Aut\g$.
\end{proposition}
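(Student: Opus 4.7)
The plan is to construct two mutually inverse maps: one sending the equivalence class of a grading $\Gamma$ to the conjugacy class of $\Diagg$, the other sending the conjugacy class of a saturated quasitorus $Q$ to the equivalence class of its eigenspace grading $\Gamma_Q$. Then I would verify that each is well-defined on classes and that they invert one another.

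Forward map: For a grading $\Gamma : \g = \bigoplus_{s \in S} \g_s$, I would first identify $\Diagg$ with $\widehat{U(\Gamma)}$ via $d \mapsto (s \mapsto \alpha_s(d))$, where $d|_{\g_s} = \alpha_s(d)\,\mathrm{id}$. The requirement that $d$ be a Lie algebra automorphism forces $\alpha_{s_1}(d)\alpha_{s_2}(d) = \alpha_{\mu(s_1,s_2)}(d)$ whenever $[\g_{s_1},\g_{s_2}] \neq 0$, which is exactly the complete set of defining relations of $U(\Gamma)$; conversely every character of $U(\Gamma)$ defines such a $d$. Since $\widehat{U(\Gamma)}$ separates the images of distinct $s \in S$ in $U(\Gamma)$, the eigenspace decomposition of $\Diagg$ coincides with $\Gamma$ viewed as a $U(\Gamma)$-grading, which by Definition \ref{dSQT} means $\Diagg$ is saturated. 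If $\vp \in \Aut\g$ realizes an equivalence $\vp(\g_s) = \g'_{\sigma(s)}$ between $\Gamma$ and $\Gamma'$, then $d$ is the scalar $\alpha$ on $\g_s$ if and only if $\vp d \vp^{-1}$ is the scalar $\alpha$ on $\g'_{\sigma(s)}$; hence conjugation by $\vp$ carries $\Diagg$ onto $\mathrm{Diag}\,\Gamma'$, giving well-definedness on classes.

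Backward map: To a saturated quasitorus $Q$ I assign the eigenspace grading $\Gamma_Q : \g = \bigoplus_{\lambda \in \mathcal{X}(Q)} \g_\lambda$, which is a group grading because $[\g_\lambda, \g_\mu] \subset \g_{\lambda\mu}$ by the usual derivation identity for the action of $Q$. A conjugation $Q' = \vp Q \vp^{-1}$ induces the group isomorphism $\mathcal{X}(Q) \cong \mathcal{X}(Q')$ given by pullback through $\vp$, and under this identification $\vp$ sends the $\lambda$-eigenspace of $Q$ to the corresponding eigenspace of $Q'$, exhibiting $\Gamma_Q \sim \Gamma_{Q'}$ as gradings.

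Inverse property: The composite $[Q] \mapsto [\Gamma_Q] \mapsto [\mathrm{Diag}\,\Gamma_Q]$ returns $[Q]$ immediately from the definition of saturation, which asserts $Q = \mathrm{Diag}\,\Gamma_Q$. The composite $[\Gamma] \mapsto [\Diagg] \mapsto [\Gamma_{\Diagg}]$ returns $[\Gamma]$ because, by the forward-map computation, $\Gamma_{\Diagg}$ is exactly $\Gamma$ viewed as a $U(\Gamma)$-grading. The main obstacle is bookkeeping of grading groups: since $\Gamma$ may a priori be presented by an arbitrary group $G$ that is only a quotient of $U(\Gamma)$, one must check that the equivalence relation (which remembers only the partition into components) is the correct notion to match set-theoretic conjugacy of quasitori. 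Passing to $U(\Gamma)$ at every stage is precisely what makes the two constructions inverse, and the characterization that $Q$ is saturated iff $\mathcal{X}(Q) = U(\Gamma)$ is what allows the universal group to be reconstructed from the quasitorus side.
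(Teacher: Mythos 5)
The paper itself gives no proof of this proposition --- it is quoted from the monograph \cite{EK} --- so there is nothing internal to compare against; your argument is the standard one from that source (grading $\mapsto$ $\Diagg\cong\widehat{U(\Gamma)}$, quasitorus $\mapsto$ eigenspace decomposition, saturation making the two constructions mutually inverse), and its overall structure is correct and complete at the level of the maps, their well-definedness on classes, and the two composites.

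The one step you assert without justification, and the only place where a hypothesis is genuinely needed, is the claim that ``$\widehat{U(\Gamma)}$ separates the images of distinct $s\in S$ in $U(\Gamma)$,'' which is what forces the eigenspace decomposition of $\Diagg$ to coincide with $\Gamma$ rather than with a proper coarsening of it. This requires two things: that distinct support elements remain distinct in $U(\Gamma)$, and that characters of the finitely generated abelian group $U(\Gamma)$ with values in $\K^\times$ separate its elements. The latter holds over an algebraically closed field only on the prime-to-$p$ part of the group when $\chr\K=p>0$: a $\Z_p$-graded algebra in characteristic $p$ has trivial diagonal group, so the composite $[\Gamma]\mapsto[\Diagg]\mapsto[\Gamma_{\Diagg}]$ returns the coarsening $\Gamma/U(\Gamma)_p$ rather than $\Gamma$, and the bijection fails as literally stated. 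The paper is silent about this (it handles the Sylow $p$-part separately in Theorem \ref{tMTp}), so you are not worse off than the source, but a careful write-up should either assume $\chr\K=0$ or restrict to gradings whose universal group has no $p$-torsion, and should say a word about why the support injects into $U(\Gamma)$.
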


Notice that if we already know that every quasitorus is conjugate to a subgroup of a fixed maximal torus and that two subgrous of the maximal torus are conjugate if and only if they are equal, we can say that the \textit{equivalence classes of gradings are in one-to-one correspondence with the saturated subquasitori of a fixed maximal torus.}


\section{Automorphisms of Filiform Lie Algebras}\label{sAFLA}

Our next goal will be to describe $\ag$ when $\g$ is a filiform nilpotent Lie algebra and the description of the quasitori in $\ag$. First we recall some definitions and results about filiform Lie algebras.

\subsection{Filiform Lie algebras}
 Let $\K$ be a field and $\g$ be a Lie algebra over $\K$. We denote by $\left\{\g^k\;|\; k=1,2,\ld\right\}$ the lower central series of $\g$ defined by $\g^1=\g$ and $\g^{k}=[\g^{k-1},\g]$, for $k=2,3,\ld$ One calls $\g$\textit{ nilpotent} if there is a natural $n$ such that $\g^{n+1}=\{ 0\}$. If $\g^n\ne\{ 0\}$ then $n$ is called the\textit{ nilpotent index} of $\g$. It is well-known that in this case a set of elements $\{ x_1,\ld,x_m\}\subset\g$  generates $\g$ if and only if the $\{x_1+\g^2,\ld,x_m+\g^2\}$ is the spanning set in the vector space $\g/\g^2$. As a result, a nilpotent Lie algebra with $\dim \g/\g^2\le 1$ is at most 1-dimensional. If $n>1$ then the nilpotent index of an $n$-dimensional nilpotent Lie algebra never exceeds $n-1$.

\begin{definition}\label{dFF} Given a natural number $n$, an $n$-dimensional Lie algebra $\g$ is called \emph{$n$-dimen- sional filiform} if the nilpotent index of $\g$ is maximal possible, that is, $n-1$. In this case we must have $\dim \g/\g^2=2$, $\dim \g^{k}/\g^{k-1}=1$ for $k=2,3,\ld, n-1$.
\end{definition}

In the situation described in Definition \ref{dFF},  the lower central series of $\g$ is ``thready'', whence the French name ``filiform''.
If we choose a basis $\{ e_1,e_2,\ld,e_{n-2},e_{n-1},e_n\}$ of $\g$ so that $\{ e_n\}$ is a basis of $\g^{n-1}$, $\{ e_{n-1}, e_n\}$ is a basis of $\g^{n-2}$, $\{ e_{n-2}, e_{n-1}, e_n\}$ is a basis of $\g^{n-3}$, etc., it is easy to observe that the center of $\g$ is 1-dimensional and equals $\g^{n-1}$.

\noindent Thus, the lower central sequence of $\g$ takes the form
$$\g= \g^1 \supset \g^2\supset \ld \supset \g^{n-1} =Z(\g)\supset\{ 0\},$$
where $Z(\g)$ is the center of $\g$ and all containments are proper. In any Lie algebra the lower central series is a \textit{central filtration} in the sense that $[\g^i,\g^j]\subset\g^{i+j}$.

\medskip

\begin{theorem}[\cite{Vergne}]\label{tV}
Any $n$-dimensional filiform $\K$-Lie algebra $\g$ admits an \emph{adapted basis} $\left\{ X_1, \ld , X_n  \right\}$, that is, a basis satisfying:
$$\left\{
\begin{array}{l}
[X_1,X_i]=X_{i+1}, \ i=2, \ld, n-1,\\
\left[X_2,X_3\right]=\sum_{k \geq 5} C_{23}^k X_k,\\
\left[X_i,X_{n-i+1}\right]=(-1)^{i+1}\alpha X_n , \mbox{ where } \alpha =0 \ \mbox{ when} \ n=2m+1,\\
\quad \g^i= \mathbb{K}\left\{ X_{i+1}, \ld , X_n \right\}\mbox{ for all } i\ge 2.
\end{array}
\right.$$
\end{theorem}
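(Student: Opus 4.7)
The plan is to construct the adapted basis inductively from a well-chosen element $X_1$, and then to enforce each listed relation by successive adjustments of the basis.

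First, I would pick $X_1 \in \g \setminus \g^2$ generic enough that the induced map $\overline{\ad X_1} : \g^{i-1}/\g^i \to \g^i/\g^{i+1}$ is nonzero (hence an isomorphism, both quotients being one-dimensional) for each $i = 2, \ldots, n-1$. Such an $X_1$ exists because each failure locus cuts out a proper linear subspace of $\g \setminus \g^2$ and $\K$ is infinite. Choose $X_2 \in \g$ so that $\{X_1, X_2\}$ projects to a basis of $\g/\g^2$, and set $X_{i+1} := [X_1, X_i]$ for $i = 2, \ldots, n-1$. By genericity of $X_1$, each $X_{i+1}$ lies in $\g^i \setminus \g^{i+1}$, so $\{X_{i+1}, \ldots, X_n\}$ is a basis of $\g^i$; this immediately delivers the first and fourth listed relations and shows that $\{X_1, \ldots, X_n\}$ is a basis of $\g$.

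The second relation is obtained by a single adjustment of $X_2$. Since $[X_2, X_3] \in [\g, \g^2] \subset \g^3$, write $[X_2, X_3] = \sum_{k=4}^n c_k X_k$, and replace $X_2$ by $X_2 - c_4 X_1$. This leaves $X_3, \ldots, X_n$ unchanged (because $[X_1, X_1] = 0$), and a direct computation gives $[X_2 - c_4 X_1, X_3] = [X_2, X_3] - c_4 X_4 = \sum_{k \geq 5} c_k X_k$, which is the required form.

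For the third relation, write $[X_i, X_{n-i+1}] = \beta_i X_n$, noting that the bracket lies in $\g^{n-1} = \K X_n$. Antisymmetry yields $\beta_{n+1-i} = -\beta_i$, and $\beta_1 = 0$ because $X_n$ is central. Applying the Jacobi identity to $(X_1, X_i, X_{n-i})$ and extracting the coefficient of $X_n$ produces the recursion $\beta_i + \beta_{i+1} = u_i$ for $2 \leq i \leq n-2$, where $u_i$ is the coefficient of $X_{n-1}$ in $[X_i, X_{n-i}] \in \g^{n-2} = \K X_{n-1} \oplus \K X_n$. One then performs further adjustments of the form $X_j \mapsto X_j + \lambda_j X_{j+1}$ (which preserve the relations already in force modulo the next filtration layer) to force $u_i = 0$ for every $i$. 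With $u_i = 0$, the recursion collapses to $\beta_{i+1} = -\beta_i$, giving $\beta_i = (-1)^{i+1}\alpha$ with $\alpha := -\beta_2$. When $n = 2m+1$, the ``central'' pair $i = n - i + 1 = m+1$ forces $[X_{m+1}, X_{m+1}] = 0$ by antisymmetry, so $\alpha = 0$.

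The principal obstacle is the last step: verifying that the sequence of modifications which kill the $u_i$ can be carried out simultaneously, without destroying either the inductive definition $X_{i+1} = [X_1, X_i]$ or the normalization of $[X_2, X_3]$. This amounts to a careful bookkeeping exercise, tracking how each adjustment propagates through the central filtration and exploiting the one-dimensionality of $\g^i/\g^{i+1}$ to absorb the successive corrections.
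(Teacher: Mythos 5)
The paper does not prove this statement---it is quoted from Vergne's note \cite{Vergne}---so I am judging your argument against the standard proof rather than against anything in the text. Your construction of $X_1,\ldots,X_n$, the verification of the first and fourth relations, and the normalization of $[X_2,X_3]$ by replacing $X_2$ with $X_2-c_4X_1$ are all correct and follow the usual route. The setup for the third relation (the definition of $\beta_i$, the antisymmetry $\beta_{n+1-i}=-\beta_i$, the recursion $\beta_i+\beta_{i+1}=u_i$ from Jacobi on $(X_1,X_i,X_{n-i})$, and the vanishing of $\beta_{m+1}$ for odd $n$) is also correct.

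The gap is the step you flag as ``bookkeeping'': the adjustments you propose cannot kill the $u_i$. To preserve $X_{j+1}=[X_1,X_j]$, a substitution $X_j\mapsto X_j+\lambda_jX_{j+1}$ must have all $\lambda_j$ equal (it is induced by $X_2\mapsto X_2+\lambda X_3$ with $X_1$ fixed), and a direct computation shows it leaves every $u_i$ unchanged: the correction terms $[X_{i+1},X_{n-i}]$ and $[X_i,X_{n-i+1}]$ lie in $\K X_n$, while $X_{n-1}$ changes only by a multiple of $X_n$, so the $X_{n-1}$-coefficient of $[X_i,X_{n-i}]$ is invariant. More generally, any unipotent change of basis compatible with the first relation ($X_1\mapsto X_1+w$, $w\in V_{(2)}$; $X_2\mapsto X_2+v$, $v\in V_{(3)}$) fixes each $u_i$; only the substitution $X_2\mapsto X_2-c_4X_1$ moves $u_2$ (and $u_{n-2}$), and it touches no $u_i$ with $3\le i\le n-3$. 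So the $u_i$ are essentially invariants of the algebra, and relation three must be \emph{proved} to hold automatically (as a consequence of the Jacobi identity together with relations one, two and four), not arranged by a change of basis. The actual content here is establishing the finer filtration $[V_{(i)},V_{(j)}]\subseteq V_{(i+j)}$ for $i+j\le n$ (equivalently, arguing by induction on $\dim\g$ through $\g/Z(\g)$ and showing the obstruction $\bar\alpha$ of the quotient vanishes when $n$ is odd, which uses further Jacobi identities such as the one on $(X_2,X_3,X_4)$). This is the substantive part of Vergne's lemma and it is missing from your argument.
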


It follows that, in addition to the lower central filtration, that is, given by the lower central series, a filiform Lie algebra $\g$ has another central filtration, which is the refinement of the lower central one. This appears if one sets $V_{(i)}$, $i=1,2,\ld$ for the subspace of $\g$ spanned by
$\left\{ X_{i}, \ld , X_n \right\}$. For $i \geq 3$ we have $\g^i =V_{(i+1)}$ and hence
$$[V_{(i+1)},V_{(i+j+1)}] \subset V_{(i+j+1)} \ {\text{\rm if}} \ i+j < n, \ \ {\text{\rm and}}\ [V_{(i)},V_{(n-i+1)}]=\{ 0\}\mbox{ if }n\mbox{ is odd}.$$ This filtration will be useful in proving Theorem \ref{tAUT} below.

Now let us consider a collection of vector spaces $W_{i}=\g^{i}/\g^{i+1}$, $i=1,2,\ldots n-1$. The vector space direct sum $\gr\g=\displaystyle \bigoplus_{i=1}^{n-1}W_{i}$ becomes a Lie algebra if one defines the bracket of the elements by setting $[X+\g^{i+1},Y+\g^{j+1}]=[X,Y]+\g^{i+j+1}$, for $X\in\g^{i}$, $Y\in\g^{j}$, $1\le i,j\le n-1$. It follows from the above theorem that all the associated graded algebras for filiform Lie algebras are again filiform. They belong to one of the two types as follows.

\begin{enumerate}
\item[$L_n$:] Each of these algebras has an adapted basis $\{ X_1,X_2,\ldots,X_n\}$ such that $[X_1,X_i]=X_{i+1}$, for $i=2,3,\ldots,n-1$.
\item[$Q_n$:] Here $n=2m$. Each of these algebras has an adapted basis $\{ X_1,X_2,\ldots,X_n\}$ such that $[X_1,X_i]=X_{i+1}$, if $i=2,3,\ldots,n-1$, and $[X_j,X_{2m-j+1}]=(-1)^{j+1}X_{2m}$, if $2\le j\le m$.
\end{enumerate}

If $\gr\g\cong\g$ then we call $\g$ \textit{naturally graded} (by the group $\Z$). So there are only two types of naturally graded algebras: $L_n$ and $Q_n$.

\begin{corollary}
\cite{Vergne} Any naturally graded filiform Lie algebra is isomorphic to
\begin{itemize}
\item $L_n$ if $n$ is odd,
\item $L_n$ or $Q_n$ if $n$ is even.
\end{itemize}
\end{corollary}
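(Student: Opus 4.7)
Since $\g$ being naturally graded means $\g \cong \gr\g$, it suffices to describe $\gr\g$. I fix a Vergne adapted basis $X_1,\ld,X_n$ from Theorem \ref{tV}; under the natural grading, $X_1, X_2 \in W_1$ have degree $1$, while $X_j \in W_{j-1}$ has degree $j-1$ for $j \geq 3$. Homogeneity of the Lie bracket then forces
\[
[X_i, X_j] = c_{ij} X_{i+j-1}
\]
for all $i, j \geq 2$, with $c_{ji} = -c_{ij}$ and the convention $X_k = 0$ for $k > n$. All the structure of $\gr\g$ beyond the relations $[X_1, X_j] = X_{j+1}$ is encoded by the array $(c_{ij})$, and my task is to pin down this array.

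The plan is to combine a Jacobi-derived recursion with Vergne's boundary data. Applying the Jacobi identity to $(X_1, X_i, X_j)$ and using $[X_1, X_k] = X_{k+1}$ collapses to
\[
(c_{ij} - c_{i,j+1} - c_{i+1,j}) X_{i+j} = 0,
\]
which for $i + j \leq n$ yields $c_{ij} = c_{i+1,j} + c_{i,j+1}$. Theorem \ref{tV} pins down the top antidiagonal: $c_{i, n+1-i} = (-1)^{i+1} \alpha$, with $\alpha = 0$ when $n$ is odd. I then claim $c_{ij} = 0$ for all $i, j \geq 2$ with $i+j \leq n$, by downward induction on $i+j$. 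In the base case $i+j = n$, the recursion expresses $c_{ij}$ as a sum of two top-antidiagonal entries, $(-1)^i \alpha$ and $(-1)^{i+1} \alpha$, which cancel; the inductive step is immediate from the recursion and the hypothesis.

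Consequently the only surviving brackets in $\gr\g$ are $[X_1, X_i] = X_{i+1}$ and $[X_i, X_{n+1-i}] = (-1)^{i+1}\alpha X_n$. If $\alpha = 0$ this is precisely $L_n$. If $\alpha \neq 0$, which by the antisymmetry consistency forces $n$ even, the rescaling $X_1 \mapsto X_1$, $X_j \mapsto \alpha^{-1} X_j$ for $j \geq 2$ preserves $[X_1, X_j] = X_{j+1}$ and normalizes the parameter to $1$, yielding $Q_n$. The main technical obstacle is the downward induction, whose base case depends entirely on the alternating-sign cancellation on the top antidiagonal; once that is in place the recursion forces every lower antidiagonal to vanish automatically.
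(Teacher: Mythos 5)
Your argument is correct, and it supplies exactly the computation that the paper leaves implicit: the paper states this corollary with only a citation to Vergne, deducing it from Theorem \ref{tV} together with the remark that $\gr\g$ is again filiform and falls into one of the two types $L_n$, $Q_n$. Your Jacobi-derived recursion $c_{ij}=c_{i+1,j}+c_{i,j+1}$ is the same relation the paper records as $a_{i,j}=a_{i+1,j}+a_{i,j+1}$ in Proposition \ref{gozekhakim}, and the downward induction from the alternating antidiagonal data $c_{i,n+1-i}=(-1)^{i+1}\alpha$, followed by the rescaling normalizing $\alpha$ to $1$ when $n$ is even, correctly yields $L_n$ or $Q_n$.
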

We deduce that $L_n$ and $Q_n$ admits a $\Z$-grading with support $\{1,2,\ld,n\}$. In the next paragraph, we will define other non isomorphic $\Z$-gradings and we will determine the filiform Lie algebras admitting such gradings.

\subsection{The automorphisms group}

Let $\g$ be a filiform $n$-dimensional $\K$-Lie algebra. If $\{X_1,\ld,X_n\}$ is an adapted basis, then the set $\{X_1,X_2\}$ is a set of generators of $\g$. Moreover, the vector $X_1$ satisfies $\ad(X_1)^ {n-2}  \neq 0$ and for any $X \in \g$ we have $\ad(X)^ {n-1}=0.$
\begin{definition}\cite{goze-ancochea}
A vector $U\in\g-\g^2$ is called \textit{characteristic} if  $\ad(U)^{n-2}  \neq 0$.
\end{definition}
\begin{lemma}
Let $\sigma \in \ag$. If $X$ is a characteristic vector, then $\sigma(X)$ also is characteristic.
\end{lemma}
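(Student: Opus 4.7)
The plan is very short because the statement is essentially a direct consequence of two facts: an automorphism conjugates the adjoint representation and it preserves the lower central series.

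First I would recall the conjugation identity: for any $\sigma\in\ag$ and any $X\in\g$, we have $\ad(\sigma(X))=\sigma\circ\ad(X)\circ\sigma^{-1}$ (this is just the definition of a Lie algebra automorphism applied to $[\sigma(X),\sigma(Y)]=\sigma[X,Y]$). Iterating gives $\ad(\sigma(X))^{k}=\sigma\circ\ad(X)^{k}\circ\sigma^{-1}$ for every $k\ge 1$. Since $\sigma$ is bijective, conjugation by $\sigma$ sends nonzero linear endomorphisms to nonzero linear endomorphisms. Applying this with $k=n-2$ and using the hypothesis $\ad(X)^{n-2}\neq 0$, I obtain $\ad(\sigma(X))^{n-2}\neq 0$.

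Next I need to verify that $\sigma(X)\notin\g^{2}$. This uses the characteristic property of the lower central series: since $\g^{2}=[\g,\g]$, any automorphism $\sigma$ satisfies $\sigma(\g^{2})=\sigma[\g,\g]=[\sigma(\g),\sigma(\g)]=[\g,\g]=\g^{2}$. Hence $\sigma$ induces a bijection of $\g\setminus\g^{2}$ onto itself, and $X\notin\g^{2}$ forces $\sigma(X)\notin\g^{2}$.

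Combining the two observations, $\sigma(X)\in\g\setminus\g^{2}$ and $\ad(\sigma(X))^{n-2}\neq 0$, so $\sigma(X)$ is characteristic. There is no real obstacle here; the only point worth stating carefully is the conjugation identity for $\ad$, which is what transports the non-nilpotency condition along $\sigma$.
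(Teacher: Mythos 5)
Your proof is correct, and it takes a slightly different route from the paper's. The paper proves the lemma by choosing an adapted basis $\{X_1,\ldots,X_n\}$ with $X_1=X$ and applying $\sigma$ to the relations $[X_1,X_i]=X_{i+1}$, so that the linear independence of $\sigma(X_3),\ldots,\sigma(X_n)$ yields $\ad(\sigma(X_1))^{n-2}\neq 0$; this implicitly relies on the fact that any characteristic vector can be completed to an adapted basis as its first element. You instead use the conjugation identity $\ad(\sigma(X))^{k}=\sigma\circ\ad(X)^{k}\circ\sigma^{-1}$, which transports the non-vanishing of $\ad(X)^{n-2}$ directly and needs no basis at all. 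Your version is more self-contained, and it also explicitly verifies the condition $\sigma(X)\notin\g^{2}$ (via $\sigma(\g^{2})=\g^{2}$), a point the paper's proof passes over in silence even though it is part of the definition of a characteristic vector. Both arguments are equally short; yours buys generality and completeness, the paper's stays within the adapted-basis formalism it uses throughout.
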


\pf Let $U$ be a characteristic vector. We can find an adapted basis $\{X_1,\ld,X_n\}$ of $\g$ such that $U=X_1$. We have
$$[\sigma(X_1),\sigma(X_i)]=\sigma(X_{i+1}), \ \ i=2,\ld,n-1.$$
Thus $\ad\sigma(X_1)^{n-2} \neq 0$ and $\sigma(X_1)=$ is a characteristic vector.

\begin{theorem}\label{tAUT}
Let $\g$ be a $n$-dimensional filiform $\K$-Lie algebra  with $n \geq 4$. Then the group $\ag$ is a solvable algebraic group, of toral rank at most 2.
\end{theorem}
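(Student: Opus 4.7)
The plan is to reduce the study of $\ag$ to that of the associated graded algebra $\gr \g$, which by the preceding subsection is isomorphic to either $L_n$ or $Q_n$. First, I would observe that every $\sigma \in \ag$ preserves the lower central series, that is, $\sigma(\g^i) = \g^i$ for all $i$, by an easy induction on $i$ using $\sigma([\g,\g^{i-1}]) = [\sigma(\g),\sigma(\g^{i-1})] = [\g,\g^{i-1}]$. This yields a homomorphism $\pi: \ag \to \Aut(\gr \g)$ whose image lies in the subgroup of grading-preserving automorphisms of $\gr\g$.

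Next, I would analyze the kernel $\mathcal{N} = \ker \pi$, which consists of those $\sigma$ with $\sigma(X) - X \in \g^{i+1}$ for every $X \in \g^i$. For such $\sigma$, $(\sigma - I)^{n-1}(\g) \subset \g^n = \{0\}$, so every element of $\mathcal{N}$ is unipotent. A direct check confirms that $\mathcal{N}$ is a closed normal subgroup of $\ag$, hence a normal unipotent (in particular solvable, of toral rank $0$) algebraic subgroup.

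The heart of the argument is to show that $\pi(\ag)$ is solvable of toral rank at most $2$. Since $\gr\g$ is naturally graded and generated in degree $1$ by $X_1$ and $X_2$, any grading-preserving automorphism $\tau$ of $\gr\g$ is determined by its $2 \times 2$ matrix $M$ on $\g/\g^2 = \K X_1 \oplus \K X_2$; the scalars $\mu_i$ with $\tau(X_i) = \mu_i X_i$ on the $1$-dimensional higher components ($i \geq 3$) are then forced by the bracket relations, and the projection $\Aut_{\mathrm{gr}}(\gr\g) \to \GL(\g/\g^2)$ is injective. Writing $\tau(X_1) = aX_1 + bX_2$ and $\tau(X_2) = cX_1 + dX_2$, in the $L_n$ case the relations $[X_2, X_i] = 0$ for $i \geq 3$ force $c = 0$, so $M$ is upper triangular in the basis $(X_1, X_2)$. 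In the $Q_n$ case, comparing $\tau[X_1, X_{2m-1}] = \tau(X_{2m})$ with $\tau[X_2, X_{2m-1}] = -\tau(X_{2m})$ yields the single linear constraint $a - b = d - c$; equivalently $(1,1)^{\mathrm{T}}$ is an eigenvector of $M$, so $M$ is upper triangular in the basis $(X_1 + X_2, X_1)$. In either case, $\pi(\ag)$ embeds into a Borel subgroup of $\GL(\g/\g^2)$, and is therefore solvable of toral rank at most $2$.

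The conclusion then follows from the exact sequence $1 \to \mathcal{N} \to \ag \to \pi(\ag) \to 1$: an extension of a solvable group of toral rank $\leq 2$ by a unipotent normal subgroup is again solvable of toral rank $\leq 2$. The main technical obstacle will be the $Q_n$ computation, where the remaining consistency conditions $\mu_j \mu_{2m-j+1} = \mu_{2m}$ coming from the pairings $[X_j, X_{2m-j+1}] = (-1)^{j+1} X_{2m}$ for $3 \leq j \leq m$ must be shown compatible with the formulas $\mu_i = a^{i-3}(ad-bc)$ forced by $[X_1, X_i] = X_{i+1}$; a direct verification reduces all of them to the single constraint $a - b = d - c$ already found above.
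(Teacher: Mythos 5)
Your proof is correct, and it takes a genuinely different route from the paper's. The paper argues directly in $\g$ with an adapted basis: the characteristic-vector lemma gives that $\sigma(X_1)$ has nonzero $X_1$-component, whence $\sigma(X_j)\in V_{(j)}$ for $j\ge 3$, and then the adapted-basis relation $[X_2,X_3]=\sum_{k\ge 5}C_{23}^kX_k$ forces the $X_1$-coefficient of $\sigma(X_2)$ to vanish; the conclusion is that $\ag$ lies in the (solvable) stabilizer of the refined flag $V_{(1)}\supset V_{(2)}\supset\cdots\supset V_{(n)}$, i.e.\ is lower triangular in the adapted basis, while the toral rank bound is essentially quoted from \cite{G.K} rather than proved on the spot. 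You instead pass to $\gr\g\cong L_n$ or $Q_n$, split off the unipotent kernel of $\ag\to\Aut(\gr\g)$, and embed the image into a Borel subgroup of $\GL(\g/\g^2)$. The decisive computation is cognate in the two arguments --- the vanishing of the degree-$3$ part of $[X_2,X_3]$ gives $c\mu_3=0$ with $\mu_3=ad-bc\ne 0$, which is exactly where $n\ge 4$ enters --- but your packaging buys a self-contained proof of the toral rank bound (a maximal torus meets the unipotent kernel trivially, hence injects into a subgroup of $\GL(\g/\g^2)$), which the paper's displayed proof does not actually deliver. Your deferred $Q_n$ verification does go through: with $c=0$ one gets $\mu_i=a^{i-2}d$ for $2\le i\le 2m-1$ and $\mu_{2m}=(a-b)\mu_{2m-1}$, so every condition $\mu_j\mu_{2m-j+1}=\mu_{2m}$ collapses to $d=a-b$, i.e.\ to the constraint you already derived. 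The only thing the paper's direct argument gives that yours leaves implicit is the explicit triangularity of every $\sigma\in\ag$ in the adapted basis, which the paper reuses later (e.g.\ in computing the centralizer of the maximal torus); but this is recoverable from your setup, since $\sigma$ preserves each $\g^i$ and your $c=0$ statement pins down the remaining subspace $V_{(2)}$.
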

\pf Let $\sigma \in \ag$ and $\{X_1,\ld,X_n\}$ an adapted basis of $\g$.  Then, from the previous lemma, $\sigma(X_1)=\sum\limits_{i \geq 1}a_{i1}X_i$ with $a_{11} \neq 0$. This implies that
$[\sigma(X_1),\sigma(X_i)]=\sigma(X_{i+1})$ belongs to the space generated by $\{X_{i+1},\ld,X_n\}$.
In particular,  $\sigma(X_3)=b_3X_3+\ld+b_nX_n$ with $b_3 \neq 0.$ Assume that  $\sigma(X_2)=\sum\limits_{i \geq 1}a_{i2}X_i$. Then
$[\sigma(X_2),\sigma(X_3)]= a_{12}b_3X_4+U$, where $U \in \la X_5,\ld,X_n\ra=V_{(5)}$. Since $\{X_1,\ld,X_n\}$ is  an adapted basis, we have $\left[X_2,X_3\right]=\sum_{k \geq 5} C_{23}^k X_k.$ It follows that $a_{12}b_3=0$; since $b_3 \neq 0$ we must have $a_{12}=0.$
  This shows that $\sigma(X_2) \in V_{(2)}$. Similarly we have $\sigma(X_i) \in V_{(i)}$.
 As a result, $\ag$ is a subgroup in the stabilizer of  the flag
 $$V_{(1)} \supset V_{(2)} \supset \ld \supset V_{(n-1)} \supset V_{(n)}\supset\{ 0\},$$
 which is a solvable group. The matrices of the transformations in $\ag$ with respect to the basis $\{X_1,\ld,X_n\}$ are lower triangular.\qed

\medskip
\noindent{\bf Remark} If $\dim\g=2$ or $3$, then $\ag$ contains a subgroup isomorphic to $GL(2, \K)$, hence not solvable.

\medskip

Assume that $\ag$ is solvable not nilpotent. Then the maximal abelian subalgebra $\frak{a}$ of $\dg$ is called a \textit{Malcev torus} and its dimension corresponds to the rank of $\g$.
\begin{corollary}\cite{G.K}
The rank of a filiform Lie algebra is equal to 0 or 1 or 2.
\end{corollary}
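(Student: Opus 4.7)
The plan is to bound the dimension of an arbitrary torus $T\subset\ag$ by exploiting the hallmark of filiform algebras, $\dim(\g/\g^2)=2$. Since the rank is a nonnegative integer, this will force it into $\{0,1,2\}$. For $n\ge 4$ this is exactly the toral-rank claim stated in Theorem \ref{tAUT}; the small cases $n=2,3$ (not covered by that theorem) can be handled by direct inspection.

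Let $T\cong(\K^\times)^\ell\subset\ag$, so $\widehat T\cong\Z^\ell$. The ideal $\g^2$ is characteristic, so $T$ acts on $\g/\g^2$. Being a torus, $T$ acts diagonalizably on $\g$, and the projection $\g\to\g/\g^2$ is $T$-equivariant; hence I can lift a weight basis of $\g/\g^2$ to weight vectors $x_1,x_2\in\g$ with weights $\alpha,\beta\in\widehat T$. By the generation criterion recalled at the start of Section \ref{sAFLA}, the pair $\{x_1,x_2\}$ generates the nilpotent algebra $\g$ as a Lie algebra.

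Consequently, every element of $\g$ is a linear combination of iterated brackets of $x_1$ and $x_2$, each of which is again a weight vector whose weight is a nonnegative integral combination of $\alpha$ and $\beta$. Thus every weight of $T$ on $\g$ lies in $\Z\alpha+\Z\beta$. If this subgroup were a proper subgroup of $\widehat T$, its annihilator would be a nontrivial subtorus $T_0\subset T$ acting trivially on every weight space of $\g$, hence trivially on $\g$; but the inclusion $T\subset\ag$ is faithful, forcing $T_0=\{1\}$. Therefore $\widehat T=\Z\alpha+\Z\beta$ is a free abelian group of rank $\ell$ generated by two elements, which forces $\ell\le 2$.

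For $n=2,3$, direct inspection gives the same bound: the abelian $2$-dimensional algebra has $\ag=GL(2,\K)$ of rank $2$, and the $3$-dimensional Heisenberg algebra has the diagonal $2$-torus $X_1\mapsto\lambda X_1$, $X_2\mapsto\mu X_2$, $X_3\mapsto\lambda\mu X_3$ as a maximal torus. The only step requiring care is the passage from ``weights lie in $\Z\alpha+\Z\beta$'' to ``$\widehat T=\Z\alpha+\Z\beta$'', i.e.\ the faithfulness argument; the rest is formal once one uses the two defining features of filiform Lie algebras, namely $\dim(\g/\g^2)=2$ and the fact that any two elements whose images span $\g/\g^2$ generate $\g$.
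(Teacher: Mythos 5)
Your argument is correct, and it reaches the bound by the same underlying mechanism as the paper --- namely that a filiform algebra is generated by two elements, so a torus of automorphisms is pinned down by its action on two generators --- but the packaging is genuinely different. The paper routes the claim through Theorem~\ref{tAUT}: it first shows $\sigma(X_i)\in V_{(i)}$ for every automorphism, so that $\ag$ is lower triangular with respect to an adapted basis, and then (implicitly, via the relations $a_{ii}=a_{11}^{i-2}a_{22}$ made explicit only later in the centralizer lemma) concludes that the diagonal part, hence any torus, has dimension at most $2$. You bypass triangularity and solvability entirely: you diagonalize $T$, observe that the weights on $\g$ lie in the subgroup of $\widehat T$ generated by the two weights $\alpha,\beta$ coming from $\g/\g^2$, and use faithfulness to force $\widehat T\cong\Z^\ell$ to have rank at most $2$. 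This is more intrinsic, works uniformly in all characteristics and for all $n\ge 2$ (your separate treatment of $n=2,3$ is harmless but actually unnecessary, since your main argument never uses $n\ge 4$), and it is arguably more complete than the paper's text, where the toral-rank assertion in Theorem~\ref{tAUT} is stated but not explicitly proved and the corollary is simply cited from \cite{G.K}. One small imprecision: if $\Z\alpha+\Z\beta$ is a proper subgroup of $\widehat T$ of finite index, its annihilator is a nontrivial \emph{finite} diagonalizable subgroup rather than a subtorus; but faithfulness rules that out too, and in any case the inequality $\ell\le 2$ only requires that the weight lattice have full rank $\ell$, which your argument does establish.
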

Thus, if $\g$ is filiform, the maximal torus $D$ of $\ag$ is isomorphic to $(\K^\times)^l$ with $0 \leq l \leq 2$.  

\section{On the classification of filiform Lie algebras}\label{sCFLA}

\subsection{Filiform algebras of rank $1$ or $2$}\label{ssFAR}

\begin{proposition}\label{gozekhakim}
Let $\frak{g}$ be a $n$-dimensional filiform $\K$-Lie algebra whose rank $r(\g)$ is not $ 0.$  Thus $r(\g)=2$ or $1$ and
\begin{enumerate}
\item If $r(\g)=2$, $\g$ is isomorphic to
\begin{enumerate}
\item $L_{n},\: n\geq 3$
$$
\lbrack X_{1},X_{i}\rbrack=X_{i+1}, \quad 1 \leq i \leq n-1
$$
\item $Q_{n},\: n=2m,\: m\geq 3$
$$
\begin{array}{ll}
\lbrack  Y_{1},Y_{i}\rbrack=Y_{i+1}, & 1 \leq i \leq n-2,\\
\lbrack Y_{i},Y_{n-i+1}\rbrack  =(-1)^{i+1}Y_{n}, & 2\leq 1 \leq m\\
\end{array}
$$
\end{enumerate}
\item If $r(\g)=1$, $\g$ is isomorphic to
\begin{enumerate}
\item $A_{n}^{p}(\alpha_{1},\dots,\alpha_{t}),\: n\geq 4,\: t=[\frac{n-p}{2}],\: 1 \leq p \leq n-4$
$$
\begin{array}{ll}
\lbrack X_{1},X_{i}\rbrack=X_{i+1}, \quad 1 \le i \leq n-1,\\
\lbrack X_{i},X_{i+1}\rbrack=\alpha_{i-1} X_{2i+p}, & 2 \leq i \leq t,\\
\lbrack X_{i},X_{j}\rbrack=a_{i-1,j-1} X_{i+j+p-1}, & 2 \leq i <j,\quad i+j \leq n-p+1\\
\end{array}
$$
\item $B_{n}^{p}(\alpha_1,\cdots,\alpha_{t}),\: n=2m,\: m\geq 3 ,\: \: 1 \leq p \leq n-5, \ t=[\frac{n-p-3}{2}]$
 $$
\begin{array}{ll}
\lbrack Y_{1},Y_{i}\rbrack=Y_{i+1}, & 1 \leq i \leq n-2,\\
\lbrack Y_{i},Y_{n-i+1}\rbrack  =(-1)^{i+1}Y_{n}, & 2\leq i\leq m\\
\lbrack Y_{i},Y_{i+1}\rbrack=\alpha_{i-1} Y_{2i+p}, & 2 \leq i \leq t+1\\
\lbrack Y_{i},Y_{j}\rbrack=a_{i-1,j-1} Y_{i+j+p-1}, & 2 \leq i <j,\: i+j \leq n-p\\
\end{array}
$$
\end{enumerate}
In both cases
$$
\left\{\begin{array}{l}
a_{i,i}=0,\\
a_{i,i+1}=\alpha_{i},\\
a_{i,j}=a_{i+1,j}+a_{i,j+1}.
\end{array}
\right.
$$
\end{enumerate}
where $\{X_1,\cdots,X_n\}$ is an adapted basis and $\{Y_1,\cdots,Y_n\}$ a quasi-adapted basis.
\end{proposition}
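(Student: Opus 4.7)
The plan is to deduce the classification from the action of a maximal torus $D\subset\ag$ of dimension $\ell=r(\g)\in\{1,2\}$, which is available by Theorem~\ref{tAUT} and the preceding corollary. Since $D$ consists of semisimple automorphisms and preserves the central filtration $V_{(1)}\supset V_{(2)}\supset\cdots$ exhibited in the proof of Theorem~\ref{tAUT}, I would simultaneously diagonalize $D$ in a filtration-compatible basis; combining this with Theorem~\ref{tV} gives an adapted basis $\{X_1,\ldots,X_n\}$ (or a quasi-adapted basis in the $Q_n$/$B_n^p$ branches), each $X_i$ being a weight vector with weight $\lambda_i\in\widehat{D}$.

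The relations $[X_1,X_i]=X_{i+1}$ immediately yield $\lambda_{i+1}=\lambda_1+\lambda_i$, hence
$$\lambda_i=\lambda_2+(i-2)\lambda_1\qquad(i\ge 2),$$
so $\widehat{D}$ is generated by $\lambda_1$ and $\lambda_2$ (reproving $\ell\le 2$) and any further bracket $[X_i,X_j]=\sum_k c^k_{ij}X_k$ with $2\le i<j$ forces the weight identity $(i+j-4)\lambda_1+2\lambda_2=(k-2)\lambda_1+\lambda_2$ for each $k$ with $c^k_{ij}\ne 0$. In the rank $\ell=2$ case, $\lambda_1$ and $\lambda_2$ are $\mathbb{Q}$-independent and no such identity can hold; the only escape is to allow $X_n$ to carry the anomalous weight $(n-3)\lambda_1+2\lambda_2$ rather than $(n-2)\lambda_1+\lambda_2$. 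This forces $[X_1,X_{n-1}]=0$, so we pass to a quasi-adapted basis, and Jacobi applied to triples $X_1,X_i,X_{n-i}$ produces the ``central'' brackets $[X_i,X_{n-i+1}]=(-1)^{i+1}X_n$ that are consistent only when $n=2m$ is even; this is $Q_n$. Absent the anomaly, only the chain brackets survive and we recover $L_n$.

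In the rank $\ell=1$ case, every weight is a $\mathbb{Q}$-scalar multiple of $\lambda_1$ and a nonzero bracket $[X_i,X_j]=a_{i-1,j-1}X_k$ fixes $k=i+j+p-1$ for a single integer $p$ depending only on $\g$; this $p$ is the shift appearing in the statement. The Jacobi identity
$$[X_1,[X_i,X_j]]=[[X_1,X_i],X_j]+[X_i,[X_1,X_j]]$$
then reads $a_{i-1,j-1}X_{i+j+p}=(a_{i,j-1}+a_{i-1,j})X_{i+j+p}$, i.e.\ exactly the recursion $a_{i,j}=a_{i+1,j}+a_{i,j+1}$, with $a_{ii}=0$ from antisymmetry and diagonal values $a_{i,i+1}=\alpha_i$ serving as free parameters. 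The two families $A_n^p$ and $B_n^p$ are then distinguished by the absence, respectively presence, of the filtration-$n$ central bracket $[X_i,X_{n-i+1}]=(-1)^{i+1}X_n$, which again forces $n$ even in the $B_n^p$ case; the admissible ranges $1\le p\le n-4$ and $1\le p\le n-5$ come from requiring the target indices $i+j+p-1$ and $2i+p$ to remain $\le n$.

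The main obstacle is \emph{exhaustiveness}: given an arbitrary rank-$\ell$ filiform $\g$, one must actually produce a basis in which the torus acts diagonally while simultaneously realizing the canonical bracket form. After diagonalization, this reduces to an induction on the filtration length, showing that each new level of the central series contributes at most one additional free parameter $\alpha_i$ once the recursion $a_{i,j}=a_{i+1,j}+a_{i,j+1}$ is imposed and the weight pattern is fixed. This inductive normal-form reduction is the substance of the argument in \cite{G.K} to which the proposition is attributed.
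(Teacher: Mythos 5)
The paper does not actually prove Proposition~\ref{gozekhakim}: it is imported verbatim from \cite{G.K}, with only a remark that the proof there ``is based on a simultaneous reduction of a semi-simple endomorphism and a nilpotent adjoint operator'' and extends from $\C$ to any algebraically closed field. Your strategy --- diagonalize a maximal torus $D$ compatibly with the filtration $V_{(1)}\supset V_{(2)}\supset\cdots$, read off the weight constraints $\lambda_i=\lambda_2+(i-2)\lambda_1$ from $[X_1,X_i]=X_{i+1}$, and let $\mathbb{Q}$-independence (rank $2$) or the forced relation $\lambda_2=(p+1)\lambda_1$ (rank $1$) dictate which brackets can survive, with Jacobi yielding the recursion $a_{i,j}=a_{i+1,j}+a_{i,j+1}$ and the alternating signs $(-1)^{i+1}$ --- is exactly the method the paper attributes to \cite{G.K}, and the weight bookkeeping you carry out is correct, including the observation that in the $Q_n$/$B_n^p$ branch only the quasi-adapted basis, not the adapted one, consists of weight vectors.

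The genuine gap is the one you flag yourself and then set aside: the existence of a basis that is \emph{simultaneously} a weight basis for $D$ and adapted (or quasi-adapted) in the sense of Theorem~\ref{tV}. Theorem~\ref{tAUT} gives you that $D$ preserves the flag of the $V_{(i)}$, and semisimplicity gives you a weight basis compatible with that flag, but it does not follow formally that the weight vector spanning $V_{(1)}$ modulo $V_{(2)}$ can be taken to be a characteristic vector $X_1$ with $[X_1,X_i]=X_{i+1}$ exactly (rather than $[X_1,X_i]=X_{i+1}+(\text{lower filtration terms})$), nor that the corrections can be absorbed without leaving the weight space. Killing those lower-order terms while staying inside each weight space is precisely the ``simultaneous reduction'' of \cite{G.K}, and it is where the normal forms $A_n^p(\alpha_1,\dots,\alpha_t)$, $B_n^p(\alpha_1,\dots,\alpha_t)$ and the admissible ranges of $p$ and $t$ are actually established; your inductive sentence at the end names this step but does not carry it out. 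As a reconstruction of the cited argument your proposal is faithful in outline, but as a standalone proof it is incomplete at exactly that point --- which, to be fair, is also the point at which the paper itself simply defers to the reference.
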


Note that if $\g$ is of the type $A_n^p$ then $\gr\g$ is of the type $L_n$. If $\g$ is of the type $B_n^p$ then $\gr\g$ is of the type $Q_n$.

\noindent{\bf{Remarks.}}
\begin{itemize}
  \item In this proposition, the basis used to define the brackets is not always an adapted basis. More precisely, it is adapted for  Lie algebras $L_n$ and $A_n^k$, and it is not adapted for Lie algebras $Q_n$ and $B_n^k$.
      But if $\{Y_1,\cdots,Y_n\}$ is a quasi-adapted basis, that is a basis which diagonalizes the semi-simple derivations, then the basis $\{X_1=Y_1-Y_2,X_2=Y_2,\cdots,X_n=Y_n\}$ is adapted.
  \item In \cite{G.K}, the proof of the main result is given for $\K=\C$. Without any further restrictions, we can extend this result to arbitrary fields $\K$ which are algebraically closed and of characteristic $0$. Since the proof is based on a simultaneous reduction of a semi-simple endomorphism and a nilpotent adjoint operator, this result can be extended to any field $\K$ which contains the eigenvalues of the semi-simple derivation. Thus the result is true  over any algebraically closed field.
  \item In the statement of the above result in \cite{G.K}, there is a third family denoted $C_n$. But all the Lie algebras of this family are isomorphic to $Q_n$. This error was noticed after the publication of the paper.
\end{itemize}

\subsection{Characteristically nilpotent Lie algebras}

\begin{definition}
A finite dimensional $\K$-Lie algebra $\g$ is called characteristically nilpotent if  any derivation of $\g$ is nilpotent. It is called characteristically unipotent if the group $Aut(\g)$ of the automorphisms of $\g$ is unipotent.
\end{definition}

A characteristically nilpotent Lie algebra has rank $0$ and the Lie algebra of derivations $\dg$ is nilpotent (but not necessarily characteristically nilpotent). In this case also $\ag$ is nilpotent. However, this group does not need  to be unipotent. It is unipotent if $\g$ is characteristically unipotent. Of course, if $\g$ is characteristically unipotent, it is characteristically nilpotent.

\noindent{\bf Examples}
\begin{enumerate}
\item
The simplest example \cite{campo}, denoted by $\n_{7,4}$ in terminology of \cite{GRbook}, is  7-dimensional and  given by
$$\left\{
\begin{array}{ll}
\left[  X_{1},X_{i}\right]    & =X_{i+1},\;2\leq i\leq6,\\
\left[  X_{2},X_{3}\right]    & =-X_{6},\\
\left[  X_{2},X_{4}\right]    & =-\left[  X_{5},X_{2}\right]  =-X_{7},\\
\left[  X_{3},X_{4}\right]    & =X_{7}.%
\end{array}
\right.$$
This Lie algebra is filiform. Any automorphism of $\n_{7,4}$ is unipotent. It is defined on the generators $X_1, X_2$ by
$$
\left\{
\begin{array}{l}
\sigma(X_1)=X_1+a_2X_2+a_3X_3+a_4X_4+a_5X_5+a_6X_6+a_7X_7,\\
\sigma(X_2)=X_2+b_3X_3+\frac{b_3^2-a_2}{2}X_4+b_5X_5+b_6X_6+b_7X_7.
\end{array}
\right.
$$
It follows that $\Aut(\n_{7,4})$ is a $10$-dimensional unipotent Lie group and $\n_{7,4}$ is also characteristically unipotent. Let us note that any $\sigma \in \Aut\n_{7,4}$ of finite order is equal to the identity.
\item The first example of characteristically nilpotent Lie algebra was given by  Dixmier and Lister \cite{DL}. It can be written as
$$\left\{
\begin{array}{llll}
\left[  X_{1},X_{2}\right]     =X_{5};& \left[  X_{1},X_{3}\right]
=X_{6};& \left[  X_{1},X_{4}\right]  =X_{7};& \left[  X_{1},X_{5}\right]
=-X_{8},\\
\left[  X_{2},X_{3}\right]     =X_{8};& \left[  X_{2},X_{4}\right]
=X_{6};& \left[  X_{2},X_{6}\right]  =-X_{7};& \left[  X_{3},X_{4}\right]
=-X_{5},\\
\left[  X_{3},X_{5}\right]     =-X_{7};& \left[  X_{4},X_{6}\right]  =-X_{8}.
\end{array}
\right.$$
It is a $8$-dimensional nilpotent  Lie algebra of nilindex $3$, and it is not filiform. Let us note that nilindex $3$
 is the
lowest possible nilindex for a characteristically nilpotent Lie algebra.
Its automorphisms group $\ag$ is not unipotent. For example, the linear map given by
$$
\left\{
\begin{array}{l}
\sigma(X_1)=X_5,\ \sigma(X_5)=X_1,\\
\sigma(X_2)=X_7,\ \sigma(X_7)=X_2,\\
\sigma(X_4)=X_8,\ \sigma(X_8)=X_4,\\
\sigma(X_3)=-X_3,\ \sigma(X_6)=-X_6
\end{array}
\right.
$$
is a non unipotent automorphism of $\g$ of order 2. In the case where $\ch\K\neq 2$, $\sigma$ defines a non-trivial $\Z_2$-grading $\G$ of $\g$:
$$
\G:\;\;\g=\la X_1+X_5, X_2+X_7,X_4+X_8\ra\op\la X_1-X_5, X_2-X_7,X_4-X_8, X_3, X_6\ra.
$$

\end{enumerate}

\section{Gradings of filiform Lie algebras}

\subsection{Standard gradings}
In each of the four types of filiform algebras introduced in the previous section there are standard gradings, as follows.

\begin{enumerate}
\item[\text(1)] If $\g=L_n$ then $\g=\bigoplus_{(a,b)\in\Z^2}\g_{(a,b)}$ where $\g_{(a,b)}=\{ 0\}$ except $\g_{(1,0)}=\la X_1\ra$, $\g_{(s-2,1)}=\la X_{s}\ra$, for all $s=2,\ldots, n$.
\item[\text(2)] If $\g=Q_n$ then $\g=\bigoplus_{(a,b)\in\Z^2}\g_{(a,b)}$ where $\g_{(a,b)}=\{ 0\}$ except $\g_{(1,0)}=\la(X_1+X_2)\ra $, $\g_{(s-2,1)}=\la X_{s}\ra$, for $s=2,\ldots ,n-1$, $\g_{(n-3,2)}=\la X_{n}\ra$.
\item[\text(3)] If $\g=A_n^p$ then $\g=\bigoplus_{a\in\Z}\g_a$ where $\g_a=\{ 0\}$ except $\g_1=\la  X_1\ra$, $\g_{s+p-1}=\la X_{s}\ra$, for $s=2,\ldots n$.
\item[\text(4)] If $\g=B_n^p$ then $\g=\bigoplus_{a\in\Z}\g_a$ where $\g_a=\{ 0\}$ except $\g_1=\la(X_1+X_2)\ra$, $\g_{s+p-1}=\la X_{s}\ra$, for $s=2,\ldots, n-1$, $\g_{n+2p-1}=\la X_{n}\ra$
\end{enumerate}
where $\{X_1,\cdots,X_n\}$ is an adapted basis.
Our main result says the following.

\begin{theorem}\label{tMT}
Let $\g$ be a finite-dimensional filiform $\K$-algebra of nonzero rank $r$. If $G$ is an abelian finitely generated group, then any $G$-grading of $\g$, whose support generates $G$, is isomorphic to a factor-grading of a standard grading by $\Z^r$.
\end{theorem}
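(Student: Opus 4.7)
The plan is to reduce the statement to Theorem \ref{tMTz} (respectively \ref{tMTp} in positive characteristic) by exhibiting, for each family $L_n$, $Q_n$, $A_n^p$, $B_n^p$, an explicit maximal torus $D$ of $\ag$ whose normalizer in $\ag$ coincides with $D$, and then matching the standard $\Z^r$-grading from (1)--(4) with the eigenspace decomposition of $\g$ under $D$. Once this is done, any $G$-grading $\Gamma$ with support generating $G$ yields a quasitorus $\hg\subset\ag$; after conjugating by an automorphism we may place $\hg$ inside $D$, so $\Gamma$ is obtained from the $\widehat{D}\cong\Z^r$-grading through the dual map $\psi:\Z^r\cong\widehat{D}\to G$, which is precisely the statement to be proved.

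First, I would identify the torus. In each case, reading off the adapted (or quasi-adapted) basis, one defines $D$ by assigning to the generators independent scalars, e.g.\ for $L_n$ the two-parameter family $X_1\mapsto a X_1$, $X_2\mapsto b X_2$, $X_s\mapsto a^{s-2}b X_s$ for $s\ge 3$; analogous formulas handle $Q_n$, $A_n^p$, $B_n^p$. By Corollary 3.3 (rank $\le 2$) and a dimension count these tori are maximal, and by inspection the four eigenspace decompositions agree with the standard gradings (1)--(4). In particular each weight space of $D$ is one-dimensional and all weights are pairwise distinct, a fact that will be crucial in the self-normalization argument.

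The main step, and the principal obstacle, is to verify $N_{\ag}(D)=D$. By Theorem \ref{tAUT}, $\ag$ is a connected solvable algebraic group whose elements are lower triangular in the adapted basis, so $\ag=D\ltimes U$ where $U$ is the unipotent radical (strictly lower triangular in the adapted basis, up to the normalizations fixing the top row). A standard argument in solvable algebraic groups, applied to this semidirect decomposition, gives $N_{\ag}(D)=D\cdot C_U(D)$, so it suffices to show $C_U(D)=\{1\}$. If $u\in C_U(D)$, then $u$ preserves every $D$-weight space of $\g$; since each such weight space is a line $\la X_s\ra$ (or $\la X_1+X_2\ra$, $\la X_n\ra$ in the $Q_n$/$B_n^p$ cases), the unipotent map $u$ must act as the identity on every weight line, hence equal the identity. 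A short case check across the four families, using the explicit brackets of Proposition \ref{gozekhakim}, confirms that all weight spaces are indeed one-dimensional and the exceptional directions $X_1+X_2$ and $X_n$ in the $Q_n$ and $B_n^p$ cases cause no collisions of weights.

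With $D$ self-normalizing, Theorem \ref{tquasi} lets us conjugate $\hg$ into $D$. The eigenspace decomposition of $\g$ under $D$ is the standard $\Z^r$-grading identified in Step 1, and it is a refinement of the $\hg$-eigenspace grading because $\hg\subset D$. By Remark 2.2 (the factor/refinement duality) the $G$-grading $\Gamma$ is a factor-grading of the $\Z^r$-grading via the dual $\psi:\Z^r\to G$, which is the desired conclusion. In characteristic $p>0$, the same route via Theorem \ref{tMTp} shows that $\Gamma/G_p$ is a factor-grading of the standard $\Z^r$-grading; since the standard grading group $\Z^r$ is torsion-free, any homomorphism $\psi:\Z^r\to G$ lands in the torsion-free quotient $G/G_p$, and one concludes $G_p$ is trivial whenever the support of $\Gamma$ generates $G$, reducing to the previous case.
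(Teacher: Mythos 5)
Your overall route is the same as the paper's: exhibit the standard torus $D$ in each of the four families, show it is its own normalizer in $\ag$, and then invoke Theorems \ref{tMTp}/\ref{tMTz}. Your self-normalization argument is a mild structural variant of the paper's (the paper directly computes the centralizer of $D$ by writing $\vp(X_i)=\sum_{j\ge i}a_{ji}X_j$ and forcing $a_{ji}=0$ for $j\neq i$; you instead use $N_{\ag}(D)=D\cdot C_U(D)$ together with the fact that all $D$-weight spaces are distinct lines). That variant is fine in principle, but note two points you gloss over: (i) Theorem \ref{tAUT} does not assert connectedness of $\ag$, and the decomposition $\ag=D\ltimes U$ is not free --- for the rank-one families $A_n^p$ and $B_n^p$ the diagonal part of a general automorphism has a priori two parameters $a_{11},a_{22}$, and one must actually use a nonzero structure constant $\alpha_i$ to force $a_{22}=a_{11}^{p+1}$ before the diagonal part lands in the one-dimensional $D$; this is precisely the computation the paper carries out and that your ``short case check'' must contain.

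The genuine error is in your final paragraph. The claim that ``since $\Z^r$ is torsion-free, any homomorphism $\psi:\Z^r\to G$ lands in the torsion-free quotient $G/G_p$'' is false: $\Z\to\Z_k$ is onto, and indeed the gradings $\Gamma_k^0$ of $L_n$ have universal group $\Z_k\times\Z$ with nontrivial torsion, yet they are factor-gradings of the standard $\Z^2$-grading. Consequently your deduction that $G_p$ is trivial, and hence the claimed reduction of the characteristic-$p$ case to the characteristic-$0$ case, does not go through. In positive characteristic what the torus method actually yields is the statement of Theorem \ref{tMTp}, namely that $\Gamma/G_p$ (not $\Gamma$ itself) is a factor-grading of the standard $\Z^r$-grading; the paper stops there, and if you want the stronger conclusion for $\Gamma$ you need a separate argument, not the torsion-freeness of $\Z^r$.
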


\pf Each of the four cases above gives rise to a maximal torus $D$ in $\ag$. To describe $D$ we only need to indicate the action of an element of $D$ on the generators of $\g$, which will be $X_1$ and $X_2$ in the cases of $L_n$ and $A_n$ or $X_1+X_2$ and $X_2$ in the cases of $Q_n$ and $B_n$

\begin{enumerate}
\item[\text(1)] If $\g=L_n$ then $D=\{\vp_{u,t}\:\vert\: u,t\in\K^\times\}$ where $\vp_{u,t}(X_1)=uX_1$, $\vp_{u,t}(X_2)=tX_2$.
\item[\text(2)] If $\g=Q_n$ then $D=\{\vp_{u,t}\:\vert\: u,t\in\K^\times\}$ where $\vp_{u,t}(X_1+X_2)=u(X_1+X_2)$, $\vp_{u,t}(X_2)=tX_2$.
\item[\text(3)] If $\g=A_n^p$ then $D=\{\vp_u\:\vert\: u\in\K^\times\}$ where $\vp_u(X_1)=uX_1$, $\vp_u(X_2)=u^{p+1}X_2$.
\item[\text(4)] If $\g=B_n^p$ then $D=\{\vp_u\:\vert\: u\in\K^\times\}$ where $\vp_u(X_1+X_2)=u(X_1+X_2)$, $\vp_u(X_2)=u^{p+1}X_2$.
\end{enumerate}

\begin{lemma}
In each of the four cases in Theorem \ref{tMT}, the centralizer of $D$ is equal to $D$.
\end{lemma}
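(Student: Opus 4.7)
The plan is to exploit the fact that in each of the four cases, the eigenspace decomposition of $\g$ under the torus $D$ consists of one-dimensional weight spaces with pairwise distinct weights. As a first step, I would list these weights in the eigenbasis $\{X_1,\ldots,X_n\}$ (or $\{Y_1,\ldots,Y_n\}$ for $Q_n$ and $B_n^p$) and verify distinctness: for $L_n$ the weights are $(1,0)$ and $(s-2,1)$, $2\le s\le n$; for $Q_n$ the same, but with $Y_n$ carrying weight $(n-3,2)$; for $A_n^p$ the weights are $1$ and $p+s-1$, $2\le s\le n$; and for $B_n^p$ the analogue with $Y_n$ of weight $n+2p-1$. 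Distinctness is immediate from the constraints $p\ge 1$ and $n\ge 5$ (resp.\ $n\ge 6$).

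Next, given $\sigma\in\ag$ commuting with $D$, for any $\vp\in D$ one has $\vp\sigma(Y_i)=\sigma\vp(Y_i)=\chi_i(\vp)\sigma(Y_i)$, where $\chi_i$ denotes the weight of $Y_i$. Distinctness of the $\chi_i$'s then forces $\sigma(Y_i)\in\la Y_i\ra$, so $\sigma$ is diagonal in the eigenbasis: $\sigma(Y_i)=\lambda_i Y_i$. I would then plug this diagonal form into the defining Lie brackets of Proposition \ref{gozekhakim}. The relations $[Y_1,Y_i]=Y_{i+1}$ yield the recursion $\lambda_{i+1}=\lambda_1\lambda_i$, so setting $u=\lambda_1$ and $t=\lambda_2$ gives $\lambda_s=u^{s-2}t$ for $2\le s$ (up to $s=n-1$ in the $Q_n$ and $B_n^p$ cases). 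This is exactly the action of $\vp_{u,t}$ (or $\vp_u$) on $Y_s$. In the $Q_n$ and $B_n^p$ cases, the additional relation $[Y_i,Y_{n-i+1}]=(-1)^{i+1}Y_n$ yields $\lambda_n=\lambda_2\lambda_{n-1}=u^{n-3}t^2$, again in agreement with the $D$-weight of $Y_n$. Hence in the rank-two cases $L_n$, $Q_n$ the map $\sigma$ coincides with $\vp_{u,t}\in D$.

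For the rank-one cases $A_n^p$ and $B_n^p$, one must further show that $t$ is determined by $u$. The key is the existence of at least one nontrivial ``perturbation'' bracket $[Y_i,Y_j]=a_{i-1,j-1}\,Y_{i+j+p-1}$ with $a_{i-1,j-1}\neq 0$: if all such structure constants vanished, $\g$ would reduce to its graded version $L_n$ or $Q_n$, whose rank is two, contradicting the rank-one hypothesis. Such a bracket imposes $\lambda_{i+j+p-1}=\lambda_i\lambda_j$, which combined with $\lambda_s=u^{s-2}t$ forces $u^{i+j+p-3}t=u^{i+j-4}t^2$, i.e.\ $t=u^{p+1}$. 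Thus $\sigma=\vp_u\in D$.

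The main obstacle will be the bookkeeping for $Q_n$ and $B_n^p$, where the eigenbasis $\{Y_i\}$ differs from the adapted basis $\{X_i\}$ (with $Y_1=X_1+X_2$). One must check that a diagonal map in the $Y$-basis preserves $\la X_1+X_2\ra$ rather than $\la X_1\ra$ and, in the $B_n^p$ case, confirm that the equation $\lambda_n=u^{n-3}t^2$ obtained from the $Q_n$-type relation is consistent with the equation $t=u^{p+1}$ obtained from the perturbation bracket---which it is, since both then give $\lambda_n=u^{n+2p-1}$, the correct $D$-weight of $Y_n$. Once this routine verification is in place, the equality $\sigma=\vp_{u,t}$ (respectively $\vp_u$) follows in every case.
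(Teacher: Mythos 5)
Your proof is correct and follows essentially the same route as the paper's: both reduce an element of the centralizer to a diagonal map in the eigenbasis (you via distinctness of the $D$-weights, the paper via an explicit computation of the commutation equations on the matrix entries $a_{ji}$) and then use the bracket relations to identify the diagonal entries with those of $\vp_{u,t}$ or $\vp_u$. One point in your favor: in the rank-one cases $A_n^p$ and $B_n^p$ you explicitly justify why the diagonal automorphism must satisfy $t=u^{p+1}$ (via a nonzero perturbation bracket, which exists precisely because the algebra is not $L_n$ or $Q_n$), a step the paper's argument leaves implicit.
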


\begin{proof} If $\g$ is of the type $L_n$, then we have to determine all $\vp\in\ag$ such that $\vp_{u,t}\vp=\vp\vp_{u,t}$, for all $u,t\in \K^\times$. Notice that $\vp_{u,t}(X_1)=uX_1$ and $\vp_{u,t}(X_i)=u^{i-2}tX_i$, for all $i\ge 2$. Now let $\vp(X_i)=\sum_{j\ge i}a_{ji}X_j$, for $i=1,2,\ldots,n$. Then $\vp_{u,t}\vp(X_1)= a_{11}uX_1+\sum_{j\ge 2}a_{ij}u^{j-2}tX_j$ whereas $\vp\vp_{u,t}(X_1)=\sum_{j\ge 1}a_{ij}uX_j$. Also, $\vp_{u,t}\vp(X_i)=\sum_{j\ge i}a_{ij}u^{j-2}tX_j$ whereas $\vp\vp_{u,t}(X_i)=\sum_{j\ge i}a_{ij}u^{i-2}tX_j$, if $i\ge 2$. Thus we have $a_{j1}(u^{j-2}t-u)=0$, for all $j\ne 1$ and $a_{ji}(u^{j-2}t-u^{i-2}t)=0$, for all $j\ne i$. Here $u,t$ are arbitrary elements of an infinite set $\K^\times$. It follows that all $a_{ji}$ are zero as soon as $j\ne i$. Notice that in any case, it follows from $[X_1,X_i]=X_{i+1}$ for  $i\ge 2$, that $a_{ii}=a_{11}^{i-2}a_{22}$. As a result, $\vp=\vp_{a_{11},a_{22}}$, proving that the centralizer of $D$ is indeed, $D$ itself.

Now assume $\g$ is of the type $Q_n$. Then, instead of comparing the values of the sides of $\vp_{u,t}\vp=\vp\vp_{u,t}$ at $X_1,X_2,\ldots,X_n$ we can compare them on the quasi-adapted basis $X_1+X_2,X_2,\ldots,X_n$.  Then we obtain $a_{ji}=0$, for $j\ne i$, $i\ge 2$. At the same time,
$$
\vp(X_1+X_2)=a_{11}X_1+\sum_{j\ge 2}(a_{j1}+a_{j2})X_j=a_{11}(X_1+X_2)+(a_{21}+a_{22}+a_{11})X_2+\sum_{j\ge 3}(a_{j1}-a_{j2})X_j.
$$
Applying the same argument, as before, we obtain $a_{j1}=a_{j2}=0$, for $j\ge 3$ and
$a_{21}=-a_{11}-a_{22}$. Hence $\vp(X_1+X_2)=a_{11}(X_1+X_2)$. Thus, $\vp=\vp_{a_{11},a_{22}}$, as previously.

Next, assume $\g$ is of one of the types $A_n^p$, where $p\ge 1$. In this case we can repeat the argument of the case $L_n$, bearing in mind that $t=u^{p+1}$. Then we will have equations $a_{j1}(u^{j-2}u^{p+1}-u)=0$, or  $a_{j1}(u^{j+p-2}-1)=0$, for all $j\ne 1$ and $a_{ji}(u^{j-2}u^{p+1}-u^{i-2}u^{p+1})=0$, or $a_{ji}(u^{j}-u^{i})=0$ for all $j\ne i$ and all $u\in \K^\times$. Since $j+p-2\ne 0$, we again can make the same conclusion $a_{ji}=0$, for all $j\ne i$.

The case $B_n^p$, where $k\ge 1$, is reduced to the case $Q_n$ in the same manner as $A_n^p$ to $L_n$.

Thus the proof of our lemma is complete.
\end{proof}

To complete the proof of the Theorem \ref{tMT} we need only to refer to Theorem \ref{tMTp} from the Introduction.

\section{Classification of gradings on filiform algebras of nonzero rank, up to equivalence}\label{sCGR2}

Recall that two gradings of an algebra are equivalent if there is an automorphism of this algebra permuting the components of the grading. In what follows we use Theorem \ref{tMT} according to which any grading of a filiform Lie algebra of nonzero rank is isomorphic, hence equivalent, to a grading where the elements of an adapted (case $L_n$ and $A_n^p$) or quasi adapted (case $Q_n$ and $B_n^p$) basis are homogeneous. Let $G$ be the grading group and, for each $i=1,2,\ld,n$, $d_i$ denote the degree of the $i$th element of this basis. So if $\{ X_1,X_2,\ldots,X_n\}$ is the adapted basis of $\g$ then $d_i=\deg X_i$, $i=1,2,\ld,n$, in the case of $L_n$ and $A_n^p$ and $d_i=\deg Y_i$, $i=1,2,\ld,n$, in the case of $Q_n$ and $B_n^p$, where $Y_1=X_1+X_2$ and $Y_i=X_i$, for $i=2,\ld,n$. Since $\g$ is generated by $X_1,X_2$ (respectively, $Y_1,Y_2$) it follows that knowing $a=d_1$ and $b=d_2$ automatically gives values for the remaining $d_i$, $i=3,\ld,n$.

\subsection{Gradings on $L_n$ and $A_n^p$}\label{ssLA}

Let $\{X_1,X_2,\cdots,X_n\}$ be an adapted basis of a filiform Lie algebra $\g$ of type $L_n$ or $A_n^p$. Since $[X_1,X_i]=X_{i+1}$ for $i=2,\cdots,n-1$, we know that $d_i=a^{i-2}b$ for $i=3,\cdots,n$. However, if $\g$ is of the type $A_n^p$, we already have $b=a^{p+1}$. So in this case, $d_i=a^{p+i-1}$. In the case of $L_n$, the universal group of any grading is the factor group of the free abelian group with free basis $a,b$ by the relations satisfied by $a,b$, while in the case of $A_n^p$ this is a factor-group of the free abelian group of rank 1 or rank 2 but we have to consider, in the latter case, that $b=a^{p+1}$.

Let us first consider the case of $ L_ n $.  Any grading is a coarsening of the standard grading, which we denote by $\Gamma_{\mathrm{st}}$. If all $d_ i$ are pairwise different then there is no coarsening and we have the standard grading.

\textsc{Case 1.} If $d_1=d_l $, for $2\le l\le n $ then the grading is a coarsening of the grading
$$
\Gamma^l_0: \g=\langle X_2\rangle\oplus\cd\op\la X_1,X_l\ra\op\cd\op\la X_n\ra.
$$
Since this is indeed a grading of $\g$, our claim follows. The universal group of this grading is the factor group of the free abelian group generated by $ a, b $ by a single relation $ a= a ^ { l - 2 } b $, that is, the group $ \Z $.

\textsc{Case 2.} If $d_i=d_j$, for $2\le i<j\le n $ then the grading is a coarsening of a grading
$$
\Gamma_k^0:\g=\la X_1\ra\oplus [X_2]_k\oplus\cdots\oplus[X_{k+1}]_k.
$$
Here  $[X_i]_k$ is the span of the set of all $ X_j$, $ 2\le j \le n $ such that $ k $ divides $ i-j $. This easily follows if we choose $ k $ the least positive with $ d_2= d_{2+k}$.

The universal group of this grading is the factor group of the free abelian group generated by $ a, b $ by a single relation $ b= a^k b $, that is, the group $ \Z_k\times \Z $.

Any further coarsening of $\Gamma^l_0$ is clearly also the coarsening of a $\Gamma_k^0$, so we can restrict ourselves to only considering the coarsenings of the latter grading.

\textsc{Case 3.} Any proper coarsening of $\Gamma_k^0$, which does not decrease $ k $, is equivalent to one of the following.
$$
\Gamma_k^l: \g=[X_2]_k\oplus\cdots\oplus(\la X_1\ra\oplus[X_l]_k)\oplus\cdots\oplus[X_{k+1}]_k.
$$
Indeed, any further coarsening of $[X_l]_k$ will decrease $ k $ so we have to assume that $ d_1 = d_l $, for $2 \le l\le n $, proving our claim.  The universal group of this grading is the factor group of the group $\Z_k\times \Z $ by additional relation $ a= a^{l - 2}b $, that is, the group $\Z_k $.

Clearly, any further coarsening will lead to the decreasing of $ k $,  and so any grading is equivalent to one of the previous gradings.

Notice that these gradings are pairwise inequivalent. First, we have to look at the number of homogeneous components.  Then it becomes clear that we only need to distinguish between the gradings with different values of the superscript parameter $ l= 2, \ld,n $. In this case, if an automorphism $\vp$ maps $\Gamma^l_0$ to $\Gamma^m_0$, or $ \Gamma_k^l $ to $ \Gamma_k^m $, where $m>l$, then $\vp(\langle X_1,X_l\rangle)=\langle X_1,X_m\rangle$. But then $\vp(X_{l+1})=\vp([X_1,X_l])=[\vp(X_1),\vp(X_l)]=\alpha_{m+1}X_{m+1}$, $\vp(X_{l+2})=\alpha_{m+2}X_{m+2}$, etc. Finally, $\vp(X_{n-m+l+1})=0$, which is impossible because $2\le n-m+l+1\le n$.

As a result, we have the following.

\begin{theorem}\label{tL}
Let $\g$ be a filiform Lie algebra of the type $L_n$. If $\g$ is $G$-graded, then there exists a graded homogeneous adapted basis $\{X_1,X_2,\cdots,X_n\}$.  If $d_i$ denotes the degree of $X_i$, then  any $G$-grading is equivalent to one of the following pairwise inequivalent gradings:
\begin{enumerate}
\item $\Gamma_{\mathrm{st}}$, $U(\Gamma_{\mathrm{st}})=\Z^2$, $d_1=(1,0), \ d_i=(i-2,1)$, $i=2,\ld,n$.
\item $\Gamma_0^l$, $U(\Gamma_0^l)= \Z$, $2 \leq l \leq n,$ \ $d_1=1, \ d_i=i-l+1$, $i=2,\ld,n$.
\item $\Gamma_k^0$, $U(\Gamma_k^0)=\Z_k \times \Z$, $1 \leq k \leq n-2$,  $d_1=(\overline{1},0), \ d_i=(\overline{i-2},1)$, $i=2,\ld,n$.
\item $\Gamma_k^l$, $U(\Gamma_k^l)= \Z_k$, $1 \leq k \leq n-2, \ 2 \leq l \leq k+1$,  $d_1=\overline{1}, \ d_i=\overline{i-l+1}$, $i=2,\ld,n$.
\end{enumerate}
\end{theorem}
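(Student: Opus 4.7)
The starting point is Theorem \ref{tMT}: since $L_n$ has rank $2$, every $G$-grading is isomorphic to a factor grading of the standard $\Z^2$-grading $\Gamma_{\mathrm{st}}$. Hence, after replacing $\Gamma$ by an equivalent grading, I may assume the adapted basis $\{X_1, \ldots, X_n\}$ consists of homogeneous elements. Setting $a = d_1$ and $b = d_2$ in the grading group, the relation $[X_1, X_i] = X_{i+1}$ immediately forces $d_i = a^{i-2} b$ for $2 \le i \le n$, so the only freedom lies in what relations $a$ and $b$ satisfy.

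The plan is then to enumerate the possible defining relations among $a$ and $b$. Every non-trivial relation is witnessed by a coincidence $d_r = d_s$ belonging to one of two types: either (i) $d_1 = d_l$ for some $2 \le l \le n$, equivalent to the relation $a = a^{l-2} b$, or (ii) $d_i = d_j$ for some $2 \le i < j \le n$, equivalent to $a^{j-i} = 1$. In the latter case, letting $k$ be minimal with $d_2 = d_{2+k}$ reduces (ii) to the single relation $a^k = 1$. Combining these possibilities yields four mutually exclusive cases: neither relation present (giving $\Gamma_{\mathrm{st}}$), only (i) (giving $\Gamma_0^l$), only (ii) (giving $\Gamma_k^0$), or both (giving $\Gamma_k^l$), with universal groups $\Z^2$, $\Z$, $\Z_k \times \Z$ and $\Z_k$ respectively. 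In the mixed case, $\Gamma_k^l$ arises from $\Gamma_k^0$ by further imposing $d_1 = d_l$; the bound $2 \le l \le k+1$ then follows because any larger $l$ is equivalent to a smaller one modulo $k$.

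The main obstacle is proving pairwise inequivalence. Gradings with different universal groups or different numbers of homogeneous components are automatically inequivalent, so the only thing to verify is that, within each family, the superscript $l$ is an invariant. Suppose $\vp \in \Aut\g$ implements an equivalence between $\Gamma_0^l$ (resp.\ $\Gamma_k^l$) and its analogue with superscript $m > l$. Each such grading has a unique two-dimensional component containing $X_1$, namely $\la X_1, X_l \ra$ (resp.\ $\la X_1, X_m \ra$), so $\vp$ must send $\la X_1, X_l \ra$ onto $\la X_1, X_m \ra$. Applying $\vp$ iteratively to the chain $X_{l+j+1} = [X_1, X_{l+j}]$ yields $\vp(X_{l+j}) = \alpha_{m+j} X_{m+j}$ for every $j \ge 1$ with $m+j \le n$, and pushing the chain one step beyond forces $\vp(X_{n-m+l+1}) = 0$ with $2 \le n - m + l + 1 \le n$, contradicting injectivity of $\vp$. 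This rules out the last remaining possibility and completes the classification.
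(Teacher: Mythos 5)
Your proposal is correct and follows essentially the same route as the paper: reduction via Theorem \ref{tMT} to coarsenings of the standard grading, enumeration of the relations $a=a^{l-2}b$ and $a^k=1$ into the four cases, and the same inequivalence argument via the chain $\vp(X_{l+j})=\alpha_{m+j}X_{m+j}$ forcing $\vp(X_{n-m+l+1})=0$. The only cosmetic difference is that you phrase case (ii) directly as $a^{j-i}=1$ rather than $b=a^kb$, and you describe the component of $X_1$ in $\Gamma_k^l$ as two-dimensional when it is really $\la X_1\ra\oplus[X_l]_k$ (a harmless imprecision the paper shares).
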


The total number of pairwise inequivalent gradings of $L_n$ is equal to $\displaystyle 1 + (n-1) + (n-2) + (1+2+\cdots+(n-2))=\frac{(n-1)(n+2)}{2}$.

In the case of $A_n^p$, we need to consider the coarsenings of the standard grading, which we denote here $\Gamma_{\mathrm{st}}$, where $U(\Gamma_{\mathrm{st}})\cong\Z$, with free generator $a$, $d_1=a$, $d_i=i+p-1$, where $i=2,\ld,n$. Clearly, in this case, any proper coarsening leads to relations $a^i=a^j$, hence $a^{i-j}=e$, for different $1\le i,j\le n$. If $m$ is the greatest common divisor of all such $i-j$ then the universal group is $\Z_m$. Any value of $m$ between $1$ and $n+p-2$ is possible. Indeed, if $p\le m\le n+p-2$ then $d_1=d_{m-p+2}$. If $1\le m\le n-2$ then $d_2=d_{m+2}$. But $1\le p\le n-4<n-2$, and so any $m$ between $1$ and $n+p-2$ is available. Let us denote the grading corresponding to $m$ by $\Gamma(m)$.
If $1\le m\le p-1$, $\Gamma(m)$ similar to $\Gamma_m^0$ of $L_n$:
$$
\Gamma(m): \g=\la X_1\ra\op[X_2]_m\oplus\cdots\oplus[X_{m+1}]_m.
$$
If $n-1\le m\le n+p-1$ and $l=m-p+2$, then  $\Gamma(m)$ similar to $\Gamma^l_0$ of $L_n$:
$$
\Gamma(m): \g=\la X_2\ra\oplus\cdots\oplus\la X_1,X_l\ra\oplus\cdots\oplus\la X_n\ra.
$$
If $p\le d\le n-2$ then we have the gradings similar to $\Gamma_m^l$ of $L_n$:
$$
\Gamma(m): \g=[X_2]_m\oplus\cdots\oplus(\la X_1\ra\oplus[X_l]_m)\oplus\cdots\oplus[X_{m+1}]_m.
$$
Here $l$ is a number between $2$ and $n$ such that $l+p-1\equiv 1\mod m$.
The pairwise inequivalence of all the above gradings follows, as in the case of $L_n$.

\begin{theorem}\label{tA}
Let $\g$ be a filiform Lie algebra of the type $A_n^p$. If $\g$ is $G$-graded, then there exists a graded homogeneous adapted basis $\{X_1,X_2,\cdots,X_n\}$.  If $d_i$ denotes the degree of $X_i$, then  any $G$-grading is equivalent to one of the following non equivalent gradings:
\begin{enumerate}
\item $\Gamma_{\mathrm{st}}$, $U(\Gamma_{\mathrm{st}})=\Z$, $d_1=1, \ d_i=p+i-1$, $i=2,\ld,n$.
\item $\Gamma(m)$, $U(\Gamma(m))= \Z_m$, $1 \leq m \leq n+p-2$, \ $d_1=\overline{1}, \ d_i=\overline{p+i-1}$, $i=2,\ld,n$.
\end{enumerate}
\end{theorem}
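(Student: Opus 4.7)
The plan is to invoke Theorem \ref{tMT} to reduce the question to enumerating factor gradings of the standard $\Z$-grading $\Gamma_{\mathrm{st}}$, and then to distinguish them by their universal groups. Since $U(\Gamma_{\mathrm{st}})=\Z$ is generated by $a=d_1$, with $d_i=a^{p+i-1}$ for $i=2,\ld,n$, every factor grading arises from a surjection $\Z\to\Z/m\Z$ for some integer $m\ge 0$; the value $m=0$ reproduces $\Gamma_{\mathrm{st}}$. For $m\ge 1$ the image grading $\Gamma(m)$ assigns to $X_1,X_2,\ld,X_n$ the classes $\overline 1,\overline{p+1},\overline{p+2},\ld,\overline{p+n-1}$ in $\Z_m$, and its shape is governed by two coincidence conditions: $d_i\equiv d_j\pmod m$ for $i,j\ge 2$ iff $m\mid i-j$, and $d_1\equiv d_i\pmod m$ for $i\ge 2$ iff $m\mid p+i-2$.

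Next I would carry out the case analysis showing that $\Gamma(m)$ is a proper coarsening of $\Gamma_{\mathrm{st}}$ exactly for $1\le m\le n+p-2$. For $1\le m\le p-1$ one has $m\le n-2$ (using $p\le n-4$), so $d_2\equiv d_{m+2}$, and the decomposition is analogous to $\Gamma^0_m$ of $L_n$. For $p\le m\le n-2$ both coincidence conditions hold and one gets a grading analogous to $\Gamma^l_m$ of $L_n$ with $l=m-p+2$. For $n-1\le m\le n+p-2$ only $d_1\equiv d_{m-p+2}$ holds, producing a grading analogous to $\Gamma^l_0$ of $L_n$. When $m\ge n+p-1$, no two distinct degrees coincide modulo $m$, so $\Gamma(m)$ has exactly the same underlying decomposition as $\Gamma_{\mathrm{st}}$, and its universal group is $\Z$ rather than $\Z_m$; no new grading is produced.

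Then I would verify that for $1\le m\le n+p-2$ the universal group of $\Gamma(m)$ is exactly $\Z_m$. Each of the newly forced identifications (either $d_{m+2}=d_2$ or $d_1=d_{m-p+2}$), combined with the standard relations $d_2=a^{p+1}$ and $d_i=a^{i-2}d_2$ for $i\ge 2$, collapses to the single extra relation $a^m=1$. Pairwise inequivalence of the gradings listed in the theorem is then immediate, since equivalent gradings have isomorphic universal groups and the groups $\Z,\Z_1,\Z_2,\ld,\Z_{n+p-2}$ are mutually non-isomorphic.

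The main obstacle is the bookkeeping in the middle step: one has to verify that every integer $m$ in $[1,n+p-2]$ produces at least one coincidence (so that the quotient is genuine and the universal group really drops to $\Z_m$), while no integer $m\ge n+p-1$ does. This is where the hypothesis $1\le p\le n-4$ is used, to guarantee that the three subintervals $[1,p-1]$, $[p,n-2]$, $[n-1,n+p-2]$ cover $[1,n+p-2]$ without gap and overlap in a way compatible with the three shapes of decomposition described above.
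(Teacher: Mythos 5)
Your argument follows the paper's proof essentially verbatim: both reduce via Theorem \ref{tMT} to coarsenings of the standard $\Z$-grading of $A_n^p$, identify the admissible moduli $1\le m\le n+p-2$ through the same coincidence analysis on the degrees $d_1=1$, $d_i=p+i-1$ (using $1\le p\le n-4$ to see that the three subranges cover everything and that no $m\ge n+p-1$ yields a proper coarsening), and conclude $U(\Gamma(m))=\Z_m$. Your closing observation that pairwise inequivalence is immediate because $\Z,\Z_1,\ldots,\Z_{n+p-2}$ are mutually non-isomorphic is a mild streamlining of the paper's appeal to the $L_n$ argument, but the route is the same.
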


The total number of pairwise inequivalent gradings of $A_n^p$ is  equal to $n+p-2$.

\subsection{Gradings on $Q_n$ and $B_n^p$}\label{ssQB}

Let $\{Y_1,Y_2,\cdots,Y_n\}$ be a quasi-adapted basis of a filiform Lie algebra $\g$ of type $Q_n$ or $B_n^p$. Since $[Y_1,Y_i]=Y_{i+1}$ for $i=2,\cdots,n-2$, we know that $d_i=a^{i-2}b$ for $i=3,\cdots,n-1$. Also, $[Y_i,Y_{n-i+1}]=(-1)^{i+1}Y_n$ which assigns to $d_n$ the value of $d_n=a^{n-3}b^2$. If $\g$ is of the type $B_n^p$, we already have $b=a^{p+1}$. So in this case, $d_i=a^{p+i-1}$, for $2\le i\le n-1$, and $d_n=a^{n+2p-1}$. In the case of $Q_n$, the universal group of any grading is the factor group of the free abelian group with free basis $a,b$ by the relations satisfied by $a,b$ while in the case of $B_n^p$, this is a factor-group of the free abelian group of rank 1 or rank 2 but we have to consider, in the latter case, that $b=a^{p+1}$.

Let us first consider the case of $Q_n$.  We remember that $n=2m$, for some $m\ge 2$. Now any grading is a coarsening of the standard grading, which we denote by $\og_{\mathrm{st}}$. If all $d_ i$ are pairwise different then there is no coarsening and we have the standard grading.

\textsc{Case 1.} If $d_1=d_n $ then the grading is a coarsening of the grading
$$
\og(1,n): \g=\langle Y_2\rangle\oplus\cd\op\la Y_{n-1}\ra\op\la Y_1,Y_n\ra.
$$
Since this is indeed a grading of $\g$, our claim follows. The universal group of this grading is the factor group of the free abelian group generated by $a, b$ by a single relation $a= a^{n - 3}b^2$, that is, the group $\Z\times \Z_2$.

\textsc{Case 2.} If $d_1=d_l $, for $2\le l\le n-1$ then $a=a^{l-2}b$. In this case also $d_n=a^{n-3}b^2=a^{n-l}b=d_{n-l+2}$. Hence, for all $q$ satisfying $l+q=n+2$, except $l=2$, or $q=2$ the grading is a coarsening of one of the following gradings. If $l\neq q$ then we have
$$
\og^l_0: \g=\langle Y_2\rangle\oplus\cd\op\la Y_1,Y_l\ra\op\cd\op\la Y_m,Y_n\ra\op\la Y_{n-1}\ra.
$$
If $l = q = m+1$ then we have
$$
\og^{m+1}_0: \g=\langle Y_2\rangle\oplus\cd\op\la Y_1,Y_{m+1},Y_n\ra\op\cd\op\la Y_{n-1}\ra.
$$
Notice that this grading is a coarsening of $\og(1,n)$.

In the exceptional cases, $l=2$ or $q=2$, we have the following.

If $l=2$ then we have
$$
\og^2_0: \g=\langle Y_1,Y_2\rangle\oplus\cd\op\la Y_1,Y_l\ra\op\cd\op\la Y_n\ra.
$$

If $q=2$ then we have
$$
\og^n_0: \g=\langle Y_2,Y_n\rangle\op\la Y_3\ra\op\cd\op\la Y_{n-1}\ra\op\la Y_1\ra.
$$
Since all these is indeed gradings of $\g$, our claim follows. The universal group of this grading is the factor group of the free abelian group generated by $a, b$ by a single relation $a= a^{l - 2}b $, that is, the group $\Z$.

\textsc{Case 3}. If $d_i=d_j$, for $2\le i<j\le n-1 $ then the grading is a coarsening of a grading
$$
\og_k^0:\g=\la Y_1\ra\oplus [Y_2]_k\oplus\cdots\oplus[Y_{k+2}]_k\op\la Y_n\ra,
$$
where $k$ is such that $1\le k\le n-3$.
Here  $[Y_i]_k$ is the span of the set of all $ Y_j$, $ 2\le j \le n $ such that $ k $ divides $ i-j $. This easily follows if we choose $ k $ the least positive with $ d_2= d_{2+k}$.

The universal group of this grading is the factor group of the free abelian group generated by $a, b$ by a single relation $b=a^k b$, that is, the group $ \Z_k\times \Z $.

Any coarsening of $\og(1,n)$ (Case 1) is either $\og^{m+1}_0$ or is a coarsening of some $\og_k^0$. Any coarsening of a grading $\og^l_0$ (Case 2) is also a coarsening of some $\og_k^0$, so we can restrict ourselves to only considering the coarsenings of the latter gradings. This shows that any grading is either standard, or equivalent to one of the gradings in Cases 1 - 3 or is a proper coarsening of a grading $\og_k^0$. Let us choose $\og_k^0$ with minimal possible $k$.

\textsc{Case 4.} Any proper coarsening of $\og_k^0$, which does not change $ k $, is equivalent to one of the following.
$$
\og(1,n)_k:\g= [Y_2]_k\oplus\cdots\oplus[Y_{k+2}]_k\op\la Y_1, Y_n\ra,
$$
where $1\le k\le n-3$, or
$$
\og_k^l: \g=[Y_2]_k\oplus\cdots\oplus(\la Y_1\ra\oplus[Y_l]_k)\oplus\cdots\oplus([Y_q]_k\oplus\la Y_n\ra)\op\cd\op[Y_{k+1}]_k,
$$
where $1\le k\le n-3$, $2\le l,q\le k+1$, $l+q\equiv n+2\mod k$.

The universal group of $\og(1,n)_k$ is $\Z_k\op\Z_2$, whereas, in the case of $\og_k^l$, the universal group is $\Z_k$.

Indeed, any further coarsening of $[Y_l]_k$ will decrease $ k $ so we have to assume that $ d_1 = d_l $, for $2 \le l\le n $, proving our claim.

Clearly, any further coarsening will lead to further decreasing of $ k $,  and so any grading is equivalent to one of the previous gradings.

Notice that these gradings are pairwise inequivalent. First, the gradings with different universal groups are not equivalent. As a result, we only need to distinguish between the gradings in the sets $\og^l_0$, $2\le l\le n$ (the universal group $\Z$) and $\og^l_k$, $2\le l\le k+1$ (the universal group $\Z_k$). This is done exactly in the same way, except the cases where $Y_1$ is in a component which is not 2-dimensional. However, such grading can only be mapped to a grading with the same property because $Y_1$ is the only element among $Y_i$ with $(\ad Y_i)^{n-2}\neq 0$.

As a result, we have the following.

\begin{theorem}\label{tQ}
Let $\g$ be a filiform Lie algebra of the type $Q_n$. If $\g$ is $G$-graded, then there exists a graded homogeneous quasi-adapted basis $\{Y_1,Y_2,\ldots,Y_n\}$.  If $d_i$ denotes the degree of $Y_i$, then  any $G$-grading is equivalent to one of the following pairwise equivalent gradings:
\begin{enumerate}
\item $\og_\mathrm{st}$, $U(\og_\mathrm{st})=\Z^2$, $d_1=(1,0), \ d_i=(i-2,1)$, $2\le i\le n-1$, $d_n=(n-3,2)$.
\item $\og(1,n)$, $U(\og(1,n))= \Z\times\Z_2$, $d_1=(1,\overline{0})=d_n$, $d_i=(i-2,\overline{1})$, $2\le i\le n-1$.
\item $\og_0^l$, $U(\og_0^l)= \Z$, $2 \leq l \leq n$,  $d_1=1$, $d_i=i-l+1$, $2\le i\le n-1$, $d_n=n-2l+3$.
\item $\og_k^0$, $U(\og_k^0)=\Z_k \times \Z$, $1 \leq k \leq n-3$, $d_1=(\overline{1},0), \ d_i=(\overline{i-2},1),d_n=(\overline{n-3},2)$.
\item $\og(1,n)_k$, $U(\og(1,n)_k)=\Z_k \times \Z_2$, $1 \leq k \leq n-3$, $d_1=(\overline{1},\overline{0})=d_n$, \ $d_i=(\overline{i-2},\overline{1})$, $2\le i\le n-1$.
\item $\og_k^l$, $U(\og_k^l)= \Z_k$, $1 \leq k \leq n-3, \ 2 \leq l \leq k+1$,  $d_1=\overline{1}$, \ $d_i=\overline{i-l+1}$, $d_n=\overline{n-2l+3}$.
\end{enumerate}
\end{theorem}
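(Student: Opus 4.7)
The plan is to follow the same strategy used to establish Theorem \ref{tL}, but with the complication that the relation $d_n=a^{n-3}b^2$ coming from $[Y_i,Y_{n-i+1}]=(-1)^{i+1}Y_n$ must be tracked carefully in addition to the chain $d_i=a^{i-2}b$. First, I would invoke Theorem \ref{tMT}: every $G$-grading is isomorphic to a factor-grading of the standard $\Z^2$-grading coming from the maximal torus, and the torus in case $(2)$ acts diagonally in the quasi-adapted basis, so a graded homogeneous quasi-adapted basis $\{Y_1,\ldots,Y_n\}$ exists. This reduces the problem to classifying those quotients of $\Z^2$ (modulo the relations imposed by the existing brackets) up to equivalence via the remaining action of $\Aut\g$.

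Next, the whole grading is determined by $a=d_1$ and $b=d_2$ via $d_i=a^{i-2}b$ for $3\le i\le n-1$ and $d_n=a^{n-3}b^2$, so proper coarsenings of $\og_{\mathrm{st}}$ correspond precisely to relations forced on $\{a,b\}$ beyond those already satisfied. I would then split into the natural cases based on which additional equality $d_i=d_j$ is imposed: (i) $d_1=d_n$, giving the single extra relation $a^{n-3}b^2=a$ which has universal group $\Z\times\Z_2$ and yields $\og(1,n)$; (ii) $d_1=d_l$ with $2\le l\le n-1$, forcing also the coincidence $d_n=d_{n-l+2}$, which gives the families $\og_0^l$ (with the special sub-cases $l=2$, $l=m+1$, $l=n$ treated separately because either $Y_1$ merges with $Y_2$, or the middle component becomes three-dimensional, or $Y_n$ merges with $Y_2$); (iii) $d_i=d_j$ for $2\le i<j\le n-1$, which, by choosing $k$ minimal with $d_2=d_{k+2}$, gives a relation $b=a^kb$ and produces the $\Z_k\times\Z$-grading $\og_k^0$. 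Any other grading with $d_1\neq d_i$ for $i\ne 1$ and no coincidences among $d_2,\ldots,d_{n-1}$ is the standard grading itself.

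The third step is to show that every further (non-trivial) coarsening falls into one of the remaining families $\og(1,n)_k$ or $\og_k^l$: once the $\Z_k\times\Z$ grading is fixed and one imposes an additional relation $a=a^{l-2}b$ (equivalently, $d_1$ lands in $[Y_l]_k$ for some $2\le l\le k+1$), the constraint $d_n=a^{n-3}b^2=a^{n-l}b=d_q$ with $l+q\equiv n+2\pmod k$ is automatic, and the two subfamilies correspond to whether $l=q$ (forcing $Y_n$ into the same component as $Y_1$) or $l\ne q$; this reproduces the formulas for $\og_k^l$ and $\og(1,n)_k$ with the universal groups $\Z_k$ and $\Z_k\times\Z_2$ respectively.

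The main obstacle is the final inequivalence claim. Gradings with different universal groups are automatically inequivalent, so the delicate point is distinguishing the gradings within each family parameterized by $l$. Here I would use the observation already made: $Y_1$ is, up to scalar, the only characteristic vector among the $Y_i$ (the only one with $(\ad Y_i)^{n-2}\ne 0$), so any equivalence must send the component containing $Y_1$ to the component containing $Y_1$. A putative equivalence $\vp$ sending $\og_0^l$ to $\og_0^m$ with $m>l$ would then force $\vp(Y_l)\in\la Y_m\ra$ (up to lower-order terms), and iterating with $\vp([Y_1,Y_i])=[\vp(Y_1),\vp(Y_i)]$ yields $\vp(Y_{n-m+l+1})=0$, a contradiction exactly as in the proof of Theorem \ref{tL}; the identical argument rules out equivalence between $\og_k^l$ and $\og_k^m$. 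The special components where $Y_1$ is grouped with $Y_n$ (resp.\ with several $Y_i$) are distinguished by the dimension of the component containing $Y_1$, completing the pairwise inequivalence check.
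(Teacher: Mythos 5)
Your proposal follows essentially the same route as the paper: reduce via Theorem \ref{tMT} to coarsenings of the standard $\Z^2$-grading determined by $a=d_1$, $b=d_2$ with the extra relation $d_n=a^{n-3}b^2$, split into the cases $d_1=d_n$, $d_1=d_l$ ($2\le l\le n-1$, with the special sub-cases $l=2$, $l=m+1$, $l=n$), and $d_i=d_j$ ($2\le i<j\le n-1$), then handle further coarsenings of $\og_k^0$ as $\og(1,n)_k$ or $\og_k^l$ with $l+q\equiv n+2 \pmod k$, and finally separate the gradings by universal group and by the characteristic-vector argument forcing $\vp(Y_{n-m+l+1})=0$. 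This is the paper's argument in both structure and detail, so no further comment is needed.
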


The total number of pairwise inequivalent gradings of $Q_n$ is equal to $\displaystyle 1 + 1+(n-1) + (n-3) + (n-3)+(1+2+\cdots+(n-3))=\frac{(n-1)(n+2)}{2}-1$.

In the case of $B_n^p$, we need to consider the coarsenings of the standard grading, which we denote here $\Gamma_{\mathrm{st}}$, where $U(\Gamma_{\mathrm{st}})\cong\Z$, with free generator $a$, $d_1=a$, $d_i=i+p-1$, where $i=2,\ld,n-1$, $d_n=n+2p-1$. Clearly, in this case, any proper coarsening leads to relations $a^i=a^j$, hence $a^{i-j}=e$, for different $1\le i,j\le n$. If $m$ is the greatest common divisor of all such $i-j$ then the universal group is $\Z_m$. Any value of $m$ between $1$ and $n+p-3$ is possible, similar to the case of $A_n^p$. One more possible isolated value for $m$ appears if we choose $d_1=d_n$. Then $m=n+2p-3$.  Let us denote the grading corresponding to $m$ by $\og(m)$.

If $1\le m\le p-1$, $\og(m)$ similar to $\og_m$ of $Q_n$:
$$
\og(m): \g=\la Y_1\ra\op[Y_2]_m\oplus\cdots\oplus[Y_{m+1}]_m\op\la Y_n\ra.
$$

If $p\le m\le n-3$ then we have the gradings similar to $\og_m^l$ of $Q_n$:
$$
\og(m): \g=[Y_2]_m\oplus\cdots\oplus(\la Y_1\ra\oplus[Y_l]_m)\oplus\cdots\oplus([Y_q]_m\oplus\la Y_n\ra)\op\cd\op[Y_{k+1}]_m.
$$

If $n-2\le m\le n+p-3$, $\og(m)$ similar to $\og^l$ of $Q_n$:
$$
\og(m): \g=\langle Y_2\rangle\oplus\cd\op\la Y_1,Y_l\ra\op\cd\op\la Y_m,Y_n\ra\op\la Y_{n-1}\ra.
$$
Here $l=m-p+2$.

If $m=n+2p-3$ then $\og(m)$ is similar to $og(1,n)$:
$$
\og(n+2p-3): \g=\langle Y_2\rangle\oplus\cd\op\la Y_{n-1}\ra\op\la Y_1,Y_n\ra.
$$

The pairwise inequivalence of all the above gradings follows, as in the case of $Q_n$.

\begin{theorem}\label{tB}
Let $\g$ be a filiform Lie algebra of the type $B_n^p$. If $\g$ is $G$-graded, then there exists a graded homogeneous adapted basis $\{Y_1,Y_2,\cdots,Y_n\}$.  If $d_i$ denotes the degree of $Y_i$, then  any $G$-grading is equivalent to one of the following non equivalent gradings:
\begin{enumerate}
\item $\og_{\mathrm{st}}$, $U(\og_{\mathrm{st}})=\Z$, $d_1=1, \ d_i=p+i-1$, $i=2,\ld,n-1$, $d_n=n+2p-2$.
\item $\og(m)$, $U(\og(m))= \Z_m$, $1 \leq m \leq n+p-3$ or $m=n+2p-3$, \ $d_1=\overline{1}, \ d_i=\overline{p+i-1}$, $i=2,\ld,n$, $d_n=\overline{n+2p-2}$.
\end{enumerate}
\end{theorem}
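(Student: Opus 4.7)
The plan is to reduce to factor-gradings of the standard grading and then enumerate them, exactly as in the cases of $A_n^p$ and $Q_n$. Since $B_n^p$ has rank one, Theorem~\ref{tMT} combined with the centralizer computation for the rank-one maximal torus $D=\{\vp_u\}$ carried out in Section~4 implies that any grading $\Gamma$ of $B_n^p$ by a finitely generated abelian group is isomorphic --- hence equivalent --- to a factor-grading of the standard $\Z$-grading $\og_{\mathrm{st}}$. Consequently $U(\Gamma)$ is a quotient of $\Z$, so either $U(\Gamma)=\Z$ (forcing $\Gamma=\og_{\mathrm{st}}$) or $U(\Gamma)=\Z_m$ for a unique $m\ge 1$.

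A factor-grading with $U(\Gamma)=\Z_m$ is specified by the surjection $\varepsilon:\Z\twoheadrightarrow\Z_m$ and places $Y_i,Y_j$ in a common component precisely when $d_i\equiv d_j\pmod{m}$, where $d_1=1$, $d_i=p+i-1$ for $2\le i\le n-1$, and $d_n$ is the (largest) degree assigned to $Y_n$ coming from $[Y_i,Y_{n-i+1}]=\pm Y_n$. I would split by the magnitude of $m$, paralleling the four-case analysis from the proof of Theorem~\ref{tQ}. First, for $1\le m\le p-1$ neither $d_1$ nor $d_n$ is congruent to any $d_i$ with $2\le i\le n-1$, so $Y_1$ and $Y_n$ stay in singleton components and we obtain an $\og_m$-type grading. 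Next, for $p\le m\le n-3$ there are unique $l,q\in\{2,\ldots,m+1\}$ with $d_1\equiv d_l$ and $d_n\equiv d_q$ modulo $m$, giving an $\og_m^l$-type grading. For $n-2\le m\le n+p-3$ only a single $d_i$ in the middle range is congruent to $d_1$, producing an $\og^l$-type grading. Finally, the isolated value $m=n+2p-3$ is the one for which $d_1\equiv d_n$ is the sole coincidence, producing an $\og(1,n)$-type grading. In each case I would verify that $m$ is actually the universal exponent, i.e., that no smaller modulus reproduces the same partition of $\{Y_1,\ldots,Y_n\}$.

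Pairwise inequivalence follows the pattern of Theorem~\ref{tQ}. Gradings with non-isomorphic universal groups are distinguished by their universal group. Within a fixed $\Z_m$, the invariants are the multiset of dimensions of the homogeneous components together with the index $l$ giving the position of $Y_1$; the latter is rigid because a characteristic vector $U\in\g\smallsetminus\g^2$ (one satisfying $(\ad U)^{n-2}\ne 0$) is preserved under any automorphism by the lemma leading into Theorem~\ref{tAUT}, and among the $Y_i$ only $Y_1$ is characteristic. Hence any equivalence must send the component of $Y_1$ to itself and so pins down~$l$.

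The main obstacle will be the bookkeeping in the enumeration: for each range of $m$ one must confirm that the merged sets are exactly as described, that no additional coincidence $d_i\equiv d_j\pmod{m}$ forces $U(\Gamma)$ to be strictly smaller than $\Z_m$, and that the three ranges together with the isolated value $m=n+2p-3$ exhaust the admissible moduli. The argument runs parallel to that for $Q_n$, but with the extra constraint $b=a^{p+1}$ collapsing the rank of $U(\og_{\mathrm{st}})$ from $2$ to $1$, exactly as in the passage from $L_n$ to $A_n^p$.
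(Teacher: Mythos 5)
Your proposal follows the paper's own argument essentially verbatim: reduce via Theorem~\ref{tMT} to coarsenings of the standard $\Z$-grading of $B_n^p$, enumerate the admissible moduli $m$ over the range $1\le m\le n+p-3$ together with the isolated value $m=n+2p-3$ arising from $d_1=d_n$, and establish pairwise inequivalence through the universal groups and the rigidity of the characteristic vector $Y_1$, exactly as the paper does by appeal to the $Q_n$ case. The one quibble --- your assertion that for $1\le m\le p-1$ neither $d_1$ nor $d_n$ is congruent to any $d_i$ with $2\le i\le n-1$ is not literally true, since the differences $d_i-d_1=p+i-2$ sweep out $n-2$ consecutive integers and hence always contain a multiple of such an $m$ --- is an imprecision shared by the paper's own displayed decompositions and does not affect the theorem, whose gradings $\og(m)$ are determined by the degrees reduced mod $m$.
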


The total number of pairwise inequivalent gradings of $B_n^p$ is equal to $n+p-3$.

\subsection{Characteristically nilpotent Lie algebras}
It is known from \cite{Goze-Khakim} that any filiform $(n+1)$-dimensional  Lie algebra over an algebraic field of characteristic $0$ is defined by its Lie bracket $\mu$ with $\mu=\mu_0 + \psi$ where $\mu_0$ is the Lie multiplication of $L_{n+1}$ and $\psi$ a $2$-cocycle of $Z^2(L_{n+1},L_{n+1})$ satisfying   $\psi\circ \psi=0$, that is, $\psi$ is also a $(n+1)$-dimensional Lie multiplication. Let us consider the natural $\Z$-grading of $L_{n+1}$:
$$L_{n+1}= \bigoplus_{i \in \Z} L_{n+1,i}$$
where $L_{n+1,1}$ is generated by ${e_0,e_1}$ and $L_{n+1,i}$ by ${e_i}$ for $i=2, \ldots, n,$ and other subspaces are zero. This grading induces a $\Z$-grading in the spaces of cochains of the Chevalley-Eilenberg complex of $L_{n+1}$:
$$\mathcal{C}_p^ k(L_{n+1},L_{n+1})=\{\phi \in \mathcal{C}_k(L_{n+1},L_{n+1}), \ \phi(L_{n+1,i_1},\ldots,L_{n+1,i_k})\subset L_{n+1,i_1+\ldots+i_k+p}\}.$$
Since $d(\mathcal{C}_p^ k(L_{n+1},L_{n+1})) \subset \mathcal{C}_p^ {k+1}(L_{n+1},L_{n+1})$, we deduce a grading in the spaces of cocycles and coboundaries. Let
$$H_p^ k(L_{n+1},L_{n+1})=Z_p^ k(L_{n+1},L_{n+1})/B_p^ k(L_{n+1},L_{n+1})$$
the corresponding grading in the Chevalley-Eilenberg cohomological spaces of $L_{n+1}$. We put
$$F_0H^ k(L_{n+1},L_{n+1})=\bigoplus_{p \in Z}H_p^ k(L_{n+1},L_{n+1}), \ \ F_1H^ k(L_{n+1},L_{n+1})=\bigoplus_{p \geq 1}H_p^ k(L_{n+1},L_{n+1}).$$
We have:
\begin{proposition}
Let $\psi_{k,s}$, $1 \leq k \leq n-1, \ 2k \leq s \leq n$  be the $2$-cocycle in $\mathcal{C}^ 2(L_{n+1},L_{n+1})$ defined by
\begin{itemize}
\item $\psi_{k,s}(e_k,e_{k+1})=e_s$,
\item $\psi_{k,s}(e_i,e_{i+1})=0$ if $i \neq k$,
\item $\psi_{k,s}(e_i,e_{j})=0$ if $i >  k$,
\item $\psi_{k,s}(e_i,e_{j})=(-1)^{k-i}C_{j-k-1}^{k-i}e_{i+j+s-2k-1}, \ 1 \leq i  \leq k < j-1 \leq n-1, \ 0 \leq i+j-2k-1 \leq n -s$
\end{itemize}
Then the family of $\psi_{k,s}$ with $1 \leq [n/2]-1, \ 4 \leq s \leq n$ forms a basis of $F_1H^ 2(L_{n+1},L_{n+1})$.
\end{proposition}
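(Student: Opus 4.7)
The plan is to proceed in three stages: verify the cocycle condition for each $\psi_{k,s}$, check that it is homogeneous of positive weight, and then establish linear independence modulo coboundaries together with a dimension match to $F_1H^2(L_{n+1},L_{n+1})$.

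First, since the only non-vanishing brackets of $L_{n+1}$ are $[e_0,e_j]=e_{j+1}$, the Chevalley--Eilenberg identity $d\psi_{k,s}(e_a,e_b,e_c)=0$ is trivial whenever $a,b,c\ge 1$: the bracket $[e_i,e_j]$ vanishes for $i,j\ge 1$, and the image of $\psi_{k,s}$ lies in the span of the $e_\ell$ with $\ell\ge 2$, which commutes with every $e_m$, $m\ge 1$. Hence only the triples $(e_0,e_b,e_c)$ with $1\le b<c\le n$ contribute, and the cocycle condition reduces, after antisymmetrization, to
$$
\psi_{k,s}(e_{b+1},e_c)+\psi_{k,s}(e_b,e_{c+1})=[e_0,\psi_{k,s}(e_b,e_c)].
$$
Substituting the explicit formula $\psi_{k,s}(e_i,e_j)=(-1)^{k-i}C_{j-k-1}^{k-i}e_{i+j+s-2k-1}$, this reduces to the Pascal identity $C_{m-1}^{r-1}+C_{m-1}^{r}=C_m^{r}$. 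The boundary cases, where one of the arguments falls outside the range $1\le i\le k<j-1$ or where the image index would exceed $n$, must be checked separately; in each such case both sides vanish under the stated conventions.

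Second, $\psi_{k,s}$ is homogeneous of weight $p=s-2k-1$ with respect to the natural $\mathbb{Z}$-grading, which is strictly positive in the listed range. Thus every $\psi_{k,s}$ represents a class in $F_1H^2(L_{n+1},L_{n+1})$. For linear independence in cohomology, the key observation is the \emph{leading pair} $(e_k,e_{k+1})$: by inspection, among the $\psi_{k',s'}$ only those with $k'=k$ are nonzero on $(e_k,e_{k+1})$, and there $\psi_{k,s}(e_k,e_{k+1})=e_s$. Moreover, no such combination can be a coboundary: if $\sum_s \lambda_{k,s}\psi_{k,s}=d\varphi$ at the smallest $k$ appearing in a hypothetical relation, then evaluation at $(e_k,e_{k+1})$ gives
$$
\sum_s\lambda_{k,s}e_s=d\varphi(e_k,e_{k+1})=-\varphi([e_k,e_{k+1}])=0,
$$
since $[e_k,e_{k+1}]=0$ in $L_{n+1}$ for $k\ge 1$. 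An induction on $k$ then forces every $\lambda_{k,s}$ to vanish.

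To conclude that the $\psi_{k,s}$ span $F_1H^2$ one compares with the weight-decomposition of $H^2(L_{n+1},L_{n+1})$ obtained from the natural grading on the Koszul complex of $L_{n+1}$; the cardinality of the pairs $(k,s)$ in the stated range matches the total dimension of $F_1H^2$. The main obstacle of the proof is precisely this last step: while the cocycle check is elementary combinatorics and the leading-pair argument handles independence cleanly, ruling out extra cohomology classes in positive weight requires either invoking the known graded description of $H^\ast(L_{n+1},L_{n+1})$ (computed by Khakimdjanov, Millionshchikov, and others) or a direct weight-by-weight calculation in the Chevalley--Eilenberg complex, which is combinatorially involved.
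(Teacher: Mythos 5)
The paper states this proposition without proof; it is imported from the Goze--Khakimdjanov description of filiform algebras via the cohomology of $L_{n+1}$ (cf.\ the citation of \cite{Goze-Khakim} at the start of that subsection), so there is no in-paper argument to compare yours against. Judged on its own, your outline handles the easy half correctly and leaves the hard half undone. The reduction of the cocycle identity to triples $(e_0,e_b,e_c)$ and then to Pascal's rule is sound, and evaluation at the leading pair $(e_k,e_{k+1})$ does isolate the coefficients $\lambda_{k,s}$, since $\psi_{k',s'}(e_k,e_{k+1})=0$ unless $k'=k$. But your coboundary computation $d\varphi(e_k,e_{k+1})=-\varphi([e_k,e_{k+1}])$ silently drops the adjoint-action terms $[e_k,\varphi(e_{k+1})]-[e_{k+1},\varphi(e_k)]$ from the Chevalley--Eilenberg differential; these vanish only after you restrict to $\varphi$ homogeneous of the relevant positive weight, so that $\varphi(e_j)$ has no component on $e_0$ or $e_1$. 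You have the weight decomposition available, but you must invoke it at this point. (Relatedly, the weight $s-2k-1$ is strictly positive only when $s\ge 2k+2$, not on the whole range $2k\le s\le n$ appearing in the definition; the intended basis range is $s\ge 2k+2$, as the paper's low-dimensional examples confirm, and your positivity claim should be stated for that range.)

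The more serious gap is the spanning statement: that every class of $F_1H^2(L_{n+1},L_{n+1})$ is a combination of the $[\psi_{k,s}]$ is precisely the substantive content of the proposition, and your proposal explicitly defers it either to the known graded description of $H^{*}(L_{n+1},L_{n+1})$ or to an unperformed weight-by-weight calculation. The first option is circular if the known description is this very statement, and the second is not carried out. To close the gap you would need either a precise citation to where the dimension of each $H^2_p$, $p\ge 1$, is computed, or a direct argument --- for instance, showing that a positive-weight cocycle is determined, modulo coboundaries, by its values on the pairs $(e_k,e_{k+1})$, using the recursion $\psi(e_{b+1},e_c)+\psi(e_b,e_{c+1})=[e_0,\psi(e_b,e_c)]$ to propagate from the diagonal and a normalization of $\psi(e_1,\cdot)$ by coboundaries of degree-one cochains. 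As written, you have proved that the $\psi_{k,s}$ are linearly independent positive-weight cocycle classes, not that they form a basis.
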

Note that the cocycles $\psi_{k,s}$  also satisfy $\psi_{k,s}(e_k,e_j)=e_{j+s-k-1} $ when $k<j$.
\begin{proposition}
If $\mu=\mu_0+\psi$ is the Lie multiplication of a filiform $(n+1)$-dimensional Lie algebra, then $[\psi]  \in F_1H^ 2(L_{n+1},L_{n+1})$ if $n$ is even or $\psi \in F_1H^ 2(L_{n+1},L_{n+1})+[\psi_{(n-1)/2,n}]$ if $n$ is odd where $[\psi]$ denote the class in $H^ 2(L_{n+1},L_{n+1})$ of the $2$-cocycle $\psi$.
\end{proposition}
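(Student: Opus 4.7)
The strategy is to exploit the $\Z$-grading of $H^2(L_{n+1},L_{n+1})$ inherited from the natural $\Z$-grading of $L_{n+1}$. Writing $\psi=\sum_{p\in\Z}\psi_p$ with $\psi_p\in\mathcal{C}_p^2(L_{n+1},L_{n+1})$, one notes that the Chevalley--Eilenberg differential induced by $\mu_0$ preserves the degree (because $\mu_0$ is homogeneous of degree $0$), so each $\psi_p$ is itself a $2$-cocycle and $[\psi]=\sum_p[\psi_p]$ in $H^2=\bigoplus_p H_p^2$. The conclusion of the proposition is therefore equivalent to: (a) $[\psi_p]=0$ for every $p\le -1$, and (b) $[\psi_0]=0$ when $n$ is even, while $[\psi_0]\in\K\cdot[\psi_{(n-1)/2,n}]$ when $n$ is odd.

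For (a), I would show that $H_p^2(L_{n+1},L_{n+1})=0$ for every $p\le -1$ by an explicit primitive construction. Given a homogeneous cocycle $\phi$ of degree $p<0$, the image $\phi(e_i,e_j)$ lies in $L_{n+1,i+j+p}$ and vanishes unless $i+j\ge 1-p$. Using the relations $[e_0,e_i]=e_{i+1}$ together with the cocycle identity $d\phi=0$, one inductively constructs a $1$-cochain $\eta\in\mathcal{C}_p^1$ with $d\eta=\phi$ by defining $\eta(e_i)$ successively for $i=0,1,\ldots,n$; the negativity of $p$ ensures the recursion closes inside $L_{n+1}$ without running off the top of the grading.

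For (b), a direct analysis of the degree-zero piece is required: a cocycle $\phi\in Z_0^2$ is determined by scalars $c_{ij}$ with $\phi(e_i,e_j)$ a multiple of $e_{i+j-1}$, and the cocycle identity $d\phi=0$ together with the coboundary image reduces the space of classes to at most one dimension. A nonzero class exists precisely when the constraints admit a solution supported symmetrically about a single middle basis element, which requires $(n-1)/2$ to be an integer, i.e., $n$ odd. For $n$ even the would-be middle pair $e_{(n-1)/2}\wedge e_{(n+1)/2}$ does not exist and the degree-zero cohomology collapses to zero. When it does exist, the nonzero class is represented by $\psi_{(n-1)/2,n}$, whose defining brackets encode exactly the additional pairings that distinguish the associated graded $Q_{n+1}$ from $L_{n+1}$.

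The main technical obstacle lies in the degree-zero computation in step (b), where a careful combinatorial argument involving the binomial coefficients $C_{j-k-1}^{k-i}$ appearing in the definition of $\psi_{k,s}$ is needed both to verify the reduction and to identify $[\psi_{(n-1)/2,n}]$ as the unique generator. The negative-degree vanishing in step (a) is more routine, but the induction needs special care at the pair $(e_0,e_1)$ because $L_{n+1,1}=\la e_0,e_1\ra$ is the unique two-dimensional homogeneous component; the cocycle identity applied to the triple $(e_0,e_1,e_i)$ is what pins down $\eta(e_1)$ unambiguously.
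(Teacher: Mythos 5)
The paper states this proposition without proof (it is imported from Goze--Khakimdjanov and ultimately rests on Vergne's adapted-basis theorem, Theorem \ref{tV}), so your proposal must be judged against that intended argument. Your reduction to homogeneous components $[\psi]=\sum_p[\psi_p]$ is legitimate, but step (a) --- the claimed vanishing $H_p^2(L_{n+1},L_{n+1})=0$ for all $p\le -1$ --- is false, and this is a fatal gap rather than a fixable technicality. Concretely, the degree $-1$ cochain $\phi$ defined by $\phi(e_1,e_j)=e_j$ for $j\ge 2$ and $\phi=0$ on all other pairs of basis vectors is a $2$-cocycle: on the triples $(e_0,e_1,e_j)$ one gets $[e_0,\phi(e_1,e_j)]-\phi([e_0,e_1],e_j)+\phi([e_0,e_j],e_1)=e_{j+1}-0-e_{j+1}=0$, and all other triples vanish term by term because $[e_i,e_j]=0$ for $i,j\ge 2$ in $L_{n+1}$. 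It is not a coboundary: any $\eta\in\mathcal{C}_{-1}^1$ has $\eta(e_0)=\eta(e_1)=0$ and $\eta(e_j)\in\la e_{j-1}\ra$ for $j\ge 3$, whence $d\eta(e_1,e_j)=[e_1,\eta(e_j)]=0\neq e_j$. (This class is the infinitesimal deformation of $L_{n+1}$ toward the solvable, non-nilpotent algebra in which $\ad e_1$ acts diagonally on $\la e_2,\ldots,e_n\ra$; it even satisfies $\phi\circ\phi=0$, so it integrates to an actual Lie multiplication $\mu_0+\phi$.) Hence negative-degree cohomology does not vanish and no primitive construction can succeed; moreover this example shows the conclusion of the proposition genuinely fails for non-filiform deformations, so any proof that, like your step (a), never invokes the filiform hypothesis in negative degrees cannot be correct.

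The intended proof does not compute cohomology in negative degrees at all. By Theorem \ref{tV} one chooses an adapted basis; the central filtration by the subspaces $V_{(i)}$ recorded there gives $[e_i,e_j]\in\la e_{i+j},\ldots,e_n\ra$ for $i,j\ge 1$, so that $\psi=\mu-\mu_0$ is, already as a cochain, a sum of homogeneous components of degree $\ge 0$. The degree-zero component is exactly the multiplication of $\gr\g$ minus that of $L_{n+1}$, i.e.\ the term $[e_j,e_{n-j}]=(-1)^{j}\alpha e_n$, which Vergne's theorem forces to be zero when $n+1$ is odd ($n$ even) and which equals $\alpha\,\psi_{(n-1)/2,n}$ when $n$ is odd. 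Your degree-zero analysis in step (b) is broadly consistent with this (and your computation that $H_0^2$ is at most one-dimensional, spanned by $[\psi_{(n-1)/2,n}]$ when $n$ is odd, is plausible), but the real content of the proposition lives in the adapted-basis statement, not in a vanishing theorem for $H^2_{\le 0}(L_{n+1},L_{n+1})$; the latter is simply untrue in degrees $\le -1$.
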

\noindent{\bf Examples}
\begin{enumerate}
\item Any $7$-dimensional filiform Lie algebra can be written as $\mu=\mu_0+\psi$ with
$$\psi=a_{1,4}\psi_{1,4}+a_{1,5}\psi_{1,5}+a_{1,6}\psi_{1,6}+a_{2,6}\psi_{2,6}.$$
\item Any $8$-dimensional filiform Lie algebra can be written as $\mu=\mu_0+\psi$ with
$$\psi=a_{1,4}\psi_{1,4}+a_{1,5}\psi_{1,5}+a_{1,6}\psi_{1,6}+a_{1,7}\psi_{1,7}+a_{2,6}\psi_{2,6}+a_{2,7}\psi_{2,7}+a_{3,7}\psi_{3,7}.$$
\item Any $9$-dimensional filiform Lie algebra can be written as $\mu=\mu_0+\psi$ with
$$
\begin{array}{lll}
\psi&=&a_{1,4}\psi_{1,4}+a_{1,5}\psi_{1,5}+a_{1,6}\psi_{1,6}+a_{1,7}\psi_{1,7}+a_{1,8}\psi_{1,8}+a_{2,6}\psi_{2,6}+a_{2,7}\psi_{2,7}+a_{2,8}\psi_{2,8}\\
&&+a_{3,8}\psi_{3,8}.
\end{array}
$$
\item Any $10$-dimensional filiform Lie algebra can be written as $\mu=\mu_0+\psi$ with
$$
\begin{array}{lll}
\psi&=&a_{1,4}\psi_{1,4}+a_{1,5}\psi_{1,5}+a_{1,6}\psi_{1,6}+a_{1,7}\psi_{1,7}+a_{1,8}\psi_{1,8}+a_{1,9}\psi_{1,9}+a_{2,6}\psi_{2,6}+a_{2,7}\psi_{2,7}\\
&&+a_{2,8}\psi_{2,8}+a_{2,9}\psi_{2,9}+a_{3,8}\psi_{3,8}+a_{3,9}\psi_{3,9}+a_{4,9}\psi_{4,9}.
\end{array}
$$
\end{enumerate}
To recognize characteristically nilpotent Lie algebras among the Lie filiform Lie algebra, we can use the notion of a \textit{sill} algebra. Recall that a filiform Lie algebra $\g$
such that $\gr(\g)$ is isomorphic to $L_{n+1}$ can be written in an adapted basis as
$$[e_0,e_i]=e_{i+1}, \ i=1\cdots,n-1, \ \ [e_i,e_j]=\sum\limits_{r=1}^{n-i-j}a_{ij}^re_{i+j+r}, \ 1 \leq i < j \leq n-2.$$
A filiform Lie algebra $\g$
such that $\gr(\g)$ is isomorphic to $Q_{n+1}$ is written in an quasi-adapted basis:
$$[Z_0,Z_i]=Z_{i+1}, \ i=1\cdots,n-2, \ \ [Z_i,Z_{n-i}]=(-1)^iZ_n, \ \ [Z_i,Z_j]=\sum\limits_{r=1}^{n-i-j}b_{ij}^rZ_{i+j+r}$$ for $\ 1 \leq i < j \leq n-2.$
\begin{definition}\cite{G.K}
Let $\g$ be a $(n+1)$-dimensional filiform Lie algebra such that $\gr(\g)$ is isomorphic to $L_{n+1}$. The sill algebra of $\g$ is defined by
$$[e_0,e_i]=e_{i+1}, \ i=1,\cdots,n-1, \ \ [e_i,e_j]=a_{ij}^re_{i+j+r}$$
where $r\neq 0$ is the smallest index such that $a_{ij}^r \neq 0$ for some $(i,j)$.

If $\gr(\g)$ is isomorphic to $Q_{n+1}$, then the sill algebra is defined by

\noindent (1) If $b_{ij}^{n-i-j}=0$,
$$[Z_0,Z_i]=Z_{i+1}, \ i=1,\cdots,n-2, \ \ [Z_i,Z_{n-i}]=(-1)^iZ_n, \ \ [Z_i,Z_j]=b_{ij}^rZ_{i+j+r}$$ for $\ 1 \leq i < j \leq n-2$
where $r\neq 0$ is the smallest index such that $b_{ij}^r \neq 0$ for some $(i,j)$.

\noindent (2) If $b_{ij}^{n-i-j}\neq 0$ for some $(i,j)$,
$$[Z_0,Z_i]=Z_{i+1}, \ i=1,\cdots,n-2, \ \ [Z_i,Z_{n-i}]=(-1)^iZ_n, \ \ [Z_i,Z_j]=b_{ij}^{n-i-j}Z_{n}$$ for $\ 1 \leq i < j \leq n-2.$
\end{definition}
\begin{proposition}
A filiform Lie algebra is characteristically nilpotent if and only if it is not isomorphic to its sill algebra.
\end{proposition}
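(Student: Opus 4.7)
The plan is to prove both directions via a single structural observation: the sill algebra $\g_0$ is always equipped with a nontrivial $\Z$-grading under which every bracket in its defining presentation is homogeneous, and this grading yields a non-nilpotent semisimple derivation of $\g_0$. In the case $\gr\g\cong L_{n+1}$, I would assign weights $w_0=1$ and $w_i=r+i$ for $i\ge 1$, where $r$ is the common minimal shift appearing in the sill; the two kinds of relations then give $w_0+w_i=w_{i+1}$ and $w_i+w_j=w_{i+j+r}$, so the linear map $\delta(e_i)=w_i e_i$ is a derivation. For the $Q_{n+1}$ sill in case~(1), extending by $w_n=n+2r$ is consistent with $[Z_i,Z_{n-i}]=(-1)^iZ_n$. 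For case~(2) the shift $n-i-j$ depends on $(i,j)$, so one should instead use the coarser weighting $w_0=0$, $w_i=1$ for $1\le i\le n-1$, $w_n=2$ inherited from the natural $\Z$-grading of $Q_{n+1}$. In every situation $\delta$ is a nonzero semisimple derivation of $\g_0$.

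From this the forward implication is immediate: an isomorphism $\g\cong\g_0$ transports $\delta$ to a non-nilpotent semisimple derivation of $\g$, so $\g$ cannot be characteristically nilpotent.

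For the converse, if $\g$ is not characteristically nilpotent then $\g$ admits a non-nilpotent derivation whose semisimple Jordan component is itself a nonzero semisimple derivation of $\g$. Thus $\g$ has positive toral rank, and Proposition~\ref{gozekhakim} forces $\g$ to be one of $L_n$, $Q_n$, $A_n^p$, or $B_n^p$. For $L_n$ and $Q_n$ there are no cocycle perturbations beyond the minimal $L$- or $Q$-relations, so $\g$ coincides tautologically with its sill. For $A_n^p$ and $B_n^p$ the perturbations $[X_i,X_j]=a_{i-1,j-1}X_{i+j+p-1}$ (or the $Y$-analogue) all carry the same shift $p$, so passing to the sill discards nothing.

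The principal obstacle will be the bookkeeping in case~(2) of the $Q$-type sill, where the shift $n-i-j$ varies with the pair $(i,j)$ and the fine grading used in the other cases fails. The fix is the coarse $Q_{n+1}$-grading exhibited above, together with the remark that any $\g$ isomorphic to such a sill has positive rank and is therefore already captured by Proposition~\ref{gozekhakim}; matching the sill against the four families then forces the extra sill brackets to agree with those of the $B_n^p$ or $Q_n$ presentation, which closes the loop.
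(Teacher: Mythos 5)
The paper states this proposition without proof --- it is imported from \cite{G.K} together with the definition of the sill algebra --- so there is no in-paper argument to measure yours against; judged on its own terms, your proposal is essentially correct and is the natural argument. The forward direction works as you describe: with $w_0=1$ and $w_i=r+i$ (and $w_n=n+2r$ in case~(1) of the $Q$-type sill, resp.\ the coarse weights $0,1,\ldots,1,2$ in case~(2)) every defining bracket of the sill is homogeneous, since $(r+i)+(r+j)=r+(i+j+r)$, so $\delta(e_i)=w_ie_i$ is a nonzero semisimple derivation of the sill; hence anything isomorphic to its sill is not characteristically nilpotent. One point you should make explicit: in case~(1) of the $Q$-type sill the hypothesis $b_{ij}^{\,n-i-j}=0$ is exactly what guarantees that no retained term lands on $Z_n$, where $w_i+w_j=n+r$ would conflict with $w_n=n+2r$; your weighting silently relies on this. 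The converse via the Jordan decomposition of a non-nilpotent derivation, positivity of the rank, and Proposition~\ref{gozekhakim} is also sound, since in each normal form $L_n$, $Q_n$, $A_n^p$, $B_n^p$ all brackets beyond the $L$- or $Q$-relations occur at the single shift $p$, so each normal form equals its own sill. The one genuine gap to close is your implicit treatment of the sill as an isomorphism invariant: the sill is defined from a choice of adapted (or quasi-adapted) basis, and your converse passes from ``$\g\cong N$ and $N=\mathrm{sill}(N)$'' to ``$\g\cong\mathrm{sill}(\g)$'', which needs the sill computed in the transported basis to represent \emph{the} sill of $\g$. Either invoke the basis-independence of the sill (established in \cite{G.K}), or read the proposition as referring to the sill computed in some adapted basis, in which case your argument is complete as written.
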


\noindent{\bf Examples}
\begin{enumerate}
\item Any $7$-dimensional filiform characteristically nilpotent Lie algebra can be written $\mu=\mu_0+\psi$ with
\begin{enumerate}
\item $\psi=\psi_{1,5}+\psi_{1,6},$
\item $\psi=\psi_{1,4}+\psi_{1,6},$
\item $\psi=\psi_{1,5}+\psi_{2,6}.$
\end{enumerate}
\item Any $8$-dimensional filiform characteristically nilpotent Lie algebra Lie algebra can be written as $\mu=\mu_0+\psi$ with
\begin{enumerate}
\item $\psi=\psi_{1,4}+a_{1,5}\psi_{1,5}-a_{2,6}\psi_{2,6}+\psi_{3,7}.$
\item $\psi=\psi_{1,5}+a_{1,6}\psi_{1,6}+\psi_{3,7}.$
\item $\psi=a_{1,4}\psi_{1,4}+\psi_{2,6}+\psi_{2,7}.$
\item $\psi=\psi_{1,5}+\psi_{2,6}.$
\item $\psi=\psi_{1,4}+a_{1,6}\psi_{1,6}+\psi_{2,7}.$
\item $\psi=a_{1,5}\psi_{1,5}+\psi_{1,6}+\psi_{2,7}.$
\item $\psi=a_{1,4}\psi_{1,4}+\psi_{1,6}+\psi_{1,7}.$
\item $\psi=\psi_{1,4}+\psi_{1,6}.$
\item $\psi=\psi_{1,4}+\psi_{1,7}.$
\item $\psi=\psi_{1,5}+\psi_{1,6}.$
\end{enumerate}
\end{enumerate}

\subsection{$\Z_2$-gradings of filiform characteristically nilpotent Lie algebras}\label{ssz2}

Since characteristically nilpotent Lie algebras have zero rank, they do not admit $\Z$-gradings. The following shows that there exists a class of such Lie algebras admitting $\Z_2$-gradings.
\begin{proposition}\label{charanilp}
Let $\mu_0+\psi$ the multiplication of a $(n+1)$-dimensional characteristically nilpotent Lie algebra $\g$ such that $\gr(\g)$ is isomorphic to $L_{n+1}$. If
$$\psi=\sum a_{k,2s}\psi_{k,2s}$$
or $$\psi=\sum a_{k,2s+1}\psi_{k,2s+1}$$
then this Lie algebra admits a $\Z_2$-grading.
\end{proposition}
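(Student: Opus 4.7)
The plan is to realize the $\Z_2$-grading as the $(\pm 1)$-eigenspace decomposition of $\g=(L_{n+1},\mu_0+\psi)$ under an explicit involutive automorphism $\sigma\in\ag$, obtained by specializing a one-parameter subgroup of the maximal torus of $\Aut L_{n+1}$ studied in Section \ref{sAFLA}. For $\epsilon\in\{0,1\}$ set
$$
\sigma_\epsilon(e_0)=-e_0,\qquad \sigma_\epsilon(e_i)=(-1)^{i+\epsilon}e_i\quad\text{for } i\ge 1.
$$
Each $\sigma_\epsilon$ is the value at $t=-1$ of the torus element $\varphi_t(e_0)=te_0$, $\varphi_t(e_i)=t^{i+\epsilon}e_i$ of $\Aut L_{n+1}$; in particular $\sigma_\epsilon$ is an involutive automorphism of $\mu_0$. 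What remains is to determine when $\sigma_\epsilon$ also preserves $\psi$, and by linearity it suffices to test the individual basis cocycles $\psi_{k,s}$.

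The heart of the argument is a uniform sign computation. Every nonzero value $\psi_{k,s}(e_i,e_j)$ occurs with $i,j\ge 1$ and is a scalar multiple of $e_{i+j+s-2k-1}$. Applying $\sigma_\epsilon$ to this target multiplies it by $(-1)^{i+j+s-2k-1+\epsilon}$, whereas $\psi_{k,s}(\sigma_\epsilon e_i,\sigma_\epsilon e_j)$ differs from $\psi_{k,s}(e_i,e_j)$ by the sign $(-1)^{(i+\epsilon)+(j+\epsilon)}=(-1)^{i+j}$. The two signs agree for every admissible pair $(i,j)$ simultaneously iff $s+1+\epsilon\equiv 0\pmod 2$; that is, $\sigma_0$ is compatible with every $\psi_{k,s}$ of odd second index $s$, and $\sigma_1$ is compatible with every $\psi_{k,s}$ of even second index $s$.

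To conclude: if $\psi=\sum a_{k,2s+1}\psi_{k,2s+1}$ then $\sigma_0$ is an involutive automorphism of $\mu_0+\psi$, and if $\psi=\sum a_{k,2s}\psi_{k,2s}$ then $\sigma_1$ is; in either case the eigenspace decomposition $\g=\g_{\overline 0}\oplus\g_{\overline 1}$ under the chosen $\sigma$ is the required $\Z_2$-grading. The one point requiring care is that a single parity condition on $s$ has to control all of the typically many nonzero entries of $\psi_{k,s}$ at once, not just the ``leading'' one $\psi_{k,s}(e_k,e_{k+1})=e_s$; this succeeds because the $(i,j)$-dependent factor $(-1)^{i+j}$ appears symmetrically on both sides of the compatibility equation, so it cancels and leaves a congruence in $s$ alone.
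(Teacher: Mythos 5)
Your proof is correct and is essentially the paper's argument in a different dress: the $(\pm1)$-eigenspace decompositions of your $\sigma_0$ and $\sigma_1$ are exactly the two vector-space decompositions $\la e_2,e_4,\ldots\ra\oplus\la e_0,e_1,e_3,\ldots\ra$ and $\la e_1,e_3,\ldots\ra\oplus\la e_0,e_2,\ldots\ra$ that the paper uses, and your uniform sign check $(-1)^{i+j+s-2k-1+\epsilon}=(-1)^{i+j}$ is the same parity computation on the target index $i+j+s-2k-1$ that the paper carries out case by case. Recasting the grading as an involutive automorphism pulled from the torus is a tidy packaging, but not a different method.
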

\begin{proof} Since $\psi_{k,2s}(e_i,e_j)=ae_{i+j+2s-2k-1}$, the Lie algebra $\mu_0+\psi$ is characteristically nilpotent as soon as we have in the sum $\psi$ two terms $\psi_{k,2s}$ and $\psi_{k',2s'}$ such that $s-k \neq s'-k'$. Let us consider two $\Z_2$-gradings of the vector space $\g$ as follows.
\begin{itemize}
\item $\g=\la e_1,e_3,\cdots,e_{2p \pm 1}\ra\bigoplus \la e_0,e_2,e_4,\cdots,e_{2p}\ra $
\item $\g=\la e_2,e_4,\cdots,e_{2p}\ra\bigoplus \la e_0,e_1,e_3,\cdots,e_{2p \pm 1}\ra$
\end{itemize}
(the $\pm$ sign means that we consider the cases $n$ odd and $n $ even at one and the same time).

We consider the first vectorial decomposition. From our description of gradings on $L_n$ we can see that this is a grading of $\mu_0$. It is sufficient then to check that this is a grading of $\psi_{k,2s}$.
A cocycle $\psi_{k,s}$ is homogeneous, that is satisfies $\psi_{k,s}(\g_i,\g_j) \subset \g_{i+j(\!\!\mod\, 2)}$ if $s$ is even. In fact
$$\psi_{k,s}(e_{2i+1,2j+1})=\lambda e_{2i+2j-2k+1+s}$$
where $\lambda$ is a non zero constant and $2i+2j-2k+1+s$ is odd if and only if $s$ is even. Likewise
$$\psi_{k,s}(e_{2i,2j})=\lambda e_{2i+2j-2k-1+s}$$
with $\lambda \neq 0$ and $2i+2j-2k-1+s$ is odd if and only if $s$ is even. Since also
$$\psi_{k,s}(e_{2i},e_{2j+1})=e_{2i+2j-2k+s},$$
$2i+2j-2k+s$ is even as soon as $s$ is even. Thus the existence of this grading implies that $s$ is even.

Similarly, the vectorial decomposition of the second type, is a $\Z_2$-grading of $\mu_0+\psi_{k,s}$ if $s$ is odd.
\end{proof}

\begin{proposition}
Let $\mu_0+\psi$ the multiplication of a $(n+1)$-dimensional characteristically nilpotent Lie algebra $\g$ such that $\gr(\g)$ is isomorphic to $Q_{n+1}$. If
$$\psi=\sum a_{k,2s}\psi_{k,2s}+\psi_{\frac{n-1}{2},n}$$
or $$\psi=\sum a_{k,2s+1}\psi_{k,2s+1}+\psi_{\frac{n-1}{2},n}$$
then this Lie algebra admits a $\Z_2$-grading.
\end{proposition}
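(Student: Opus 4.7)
The plan is to mimic the proof of the preceding proposition. Since $\gr(\g)\cong Q_{n+1}$ forces $\dim\g=n+1$ to be even, $n$ is odd and $(n-1)/2\in\Z$, so $\psi_{\frac{n-1}{2},n}$ is well defined; by the cocycle formula one has $\psi_{\frac{n-1}{2},n}(e_i,e_{n-i})=(-1)^{(n-1)/2-i}e_n$ for $1\le i\le (n-1)/2$ and zero elsewhere, and adding this cocycle to $\mu_0$ precisely produces the characteristic $Q$-brackets $[e_i,e_{n-i}]=\pm e_n$ that distinguish $Q_{n+1}$ from $L_{n+1}$.

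I will reuse the same two $\Z_2$-decompositions of the underlying vector space introduced in the preceding proof,
\begin{equation*}
D_1:\;\g=\la e_1,e_3,\ldots,e_n\ra\oplus\la e_0,e_2,e_4,\ldots,e_{n-1}\ra,
\end{equation*}
\begin{equation*}
D_2:\;\g=\la e_2,e_4,\ldots,e_{n-1}\ra\oplus\la e_0,e_1,e_3,\ldots,e_n\ra,
\end{equation*}
and apply $D_1$ in the first case of the proposition and $D_2$ in the second. For each $D_j$, the check that it is a $\Z_2$-grading of $\mu_0$ and of the pertinent family $\{\psi_{k,2s}\}$ or $\{\psi_{k,2s+1}\}$ is identical to the parity-of-shift argument already carried out in the $L_{n+1}$ proof: the index shift $s-2k-1$ of $\psi_{k,s}$ has the right parity to preserve $D_1$ when $s$ is even and $D_2$ when $s$ is odd, and $\mu_0$ itself is graded for both decompositions as already recorded in our description of gradings of the naturally graded models.

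The genuinely new step is then the verification that the distinguished cocycle $\psi_{\frac{n-1}{2},n}$ is also homogeneous with respect to the decomposition attached to each case. Because $n$ is odd, for every $1\le i\le (n-1)/2$ the indices $i$ and $n-i$ have opposite parity, so $e_i$ and $e_{n-i}$ sit in opposite components of $D_j$; together with the fact that $e_n$ is odd-indexed, this reduces the homogeneity of $\psi_{\frac{n-1}{2},n}$ to a direct parity count, which I will perform separately for each of $D_1$ and $D_2$, tracking where $e_n$ is placed and which sum of degrees the bracket $[e_i,e_{n-i}]$ carries.

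The main technical obstacle is precisely this last parity bookkeeping for $\psi_{\frac{n-1}{2},n}$, since it is the one ingredient that is genuinely new relative to the $L_{n+1}$ case and it is what forces the parity restriction on the coefficients $a_{k,\cdot}$ appearing in the hypothesis of the proposition. Once the compatibility is verified in both $D_1$ and $D_2$, each of the two decompositions exhibits $\g$ as nontrivially $\Z_2$-graded in its respective case, and the proposition follows.
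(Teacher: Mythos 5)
Your $D_2$ is exactly the first decomposition used in the paper's proof for the case $\psi=\sum a_{k,2s+1}\psi_{k,2s+1}+\psi_{\frac{n-1}{2},n}$, and the parity count there does go through. The gap is in the other case: the decomposition $D_1=\la e_1,e_3,\ldots,e_n\ra\oplus\la e_0,e_2,\ldots,e_{n-1}\ra$, carried over unchanged from the $L_{n+1}$ proof, is \emph{not} a $\Z_2$-grading of any algebra whose associated graded algebra is $Q_{n+1}$, so the verification you defer to the end would fail rather than succeed. Concretely: the summand $\la e_0,e_2,\ldots,e_{n-1}\ra$ cannot be the identity component, since it is not a subalgebra ($[e_0,e_2]=e_3$ lies in the other summand); hence $\la e_1,e_3,\ldots,e_n\ra$ must have degree $\bar{0}$ and $\la e_0,e_2,\ldots,e_{n-1}\ra$ degree $\bar{1}$. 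But $\psi_{\frac{n-1}{2},n}$ forces $[e_1,e_{n-1}]=\pm e_n$, the bracket of a degree-$\bar{0}$ element with a degree-$\bar{1}$ element (recall $n$ is odd, so $n-1$ is even), whose value must therefore lie in the degree-$\bar{1}$ component; yet $D_1$ places the odd-indexed vector $e_n$ in the degree-$\bar{0}$ component. Thus $D_1$ is incompatible with precisely the cocycle $\psi_{\frac{n-1}{2},n}$ that makes the algebra of type $Q$, and no relabelling of the two components can fix this.

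The paper's proof repairs this by using, in the even case, the decomposition $\la e_1,e_3,\ldots,e_{n-2}\ra\oplus\la e_0+e_1,e_2,e_4,\ldots,e_{n-1},e_n\ra$: the central vector $e_n$ is moved to the component containing the even-indexed vectors, and $e_0$ is replaced by the quasi-adapted generator $e_0+e_1$ (mirroring the standard grading of $Q_{n+1}$, where it is $X_1+X_2$, not $X_1$, that is homogeneous and where the degree of the last basis vector involves $b^2$ rather than $b$). Since $e_n$ is central, relocating it only affects the brackets whose value is a multiple of $e_n$, and for those the parity then comes out right. Your argument becomes essentially the paper's once you substitute this corrected decomposition for $D_1$ and carry out the parity check; as written, the first half of your plan rests on a decomposition that is not a grading.
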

\pf Since any $\Z_2$-grading on  $\g$  induces the same grading on $\gr(\g)=Q_{n+1}$, we consider the vectorial decompositions of $\g$:
\begin{itemize}
  \item $\g=\la e_2,e_4,\cdots,e_{n-1}\ra\oplus \la e_0,e_1,e_3,\cdots,e_{n}\ra,$
  \item $\g=\la e_0+e_1,e_{n}\ra\oplus \la e_1,e_2,\cdots,e_{n-1}\ra,$
  \item $\g=\la e_1,e_3,\cdots,e_{n-2}\ra\oplus \la e_0+e_1,e_2,e_4,\cdots,e_{n-1},e_n\ra$
\end{itemize}
If the multiplication of $\g$ is given by $\mu_0+ \psi$ with $\psi=\sum a_{k,2s+1}\psi_{k,2s+1}+\psi_{\frac{n-1}{2},n}$, then the first vectorial decomposition is also a $\Z_2$-grading. If the multiplication of $\g$ is given by $\mu_0 + \Psi$ with $\psi=\sum a_{k,2s}\psi_{k,2s}+\psi_{\frac{n-1}{2},n}$, then the third vectorial decomposition is also a $\Z_2$-grading.  If the second vectorial decomposition is a grading, then  $\g$ is not characteristically nilpotent.

\begin{corollary}
There exists an infinite family of graded characteristically nilpotent filiform Lie algebras.
\end{corollary}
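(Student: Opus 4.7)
The plan is to produce, for each integer $n \ge 6$, at least one $(n+1)$-dimensional characteristically nilpotent filiform Lie algebra whose defining cocycle $\psi$ is supported on the $\psi_{k,s}$ with all second indices $s$ of a single parity. Proposition~\ref{charanilp} then furnishes a nontrivial $\Z_2$-grading on each such algebra, and because algebras of different dimensions are pairwise non-isomorphic, the resulting collection is automatically an infinite family of graded characteristically nilpotent filiform Lie algebras.

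For the concrete construction, I would take the multiplication $\mu_n = \mu_0 + \psi_{1,4} + \psi_{1,6}$ on the underlying vector space of $L_{n+1}$, extending the cocycles $\psi_{1,4}$ and $\psi_{1,6}$ beyond the $7$-dimensional case via the explicit formulas recorded in the preceding subsection. Both summands of the perturbation have even second index, so the first case of Proposition~\ref{charanilp} applies and equips $\g_n := (L_{n+1}, \mu_n)$ with a nontrivial $\Z_2$-grading. Moreover the two summands satisfy $s - k = 3$ and $s' - k' = 5$, which are distinct, so the criterion isolated inside the proof of Proposition~\ref{charanilp} shows that $\g_n$ is characteristically nilpotent. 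In the base case $n = 6$ this is exactly the algebra labelled (b) in the $7$-dimensional list above, and $\dim \g_n = n+1$ takes infinitely many values, so the family $\{\g_n\}_{n \ge 6}$ is infinite as required. If one instead prefers odd-parity examples to round out every case, the parallel choice $\mu_0 + \psi_{1,5} + \psi_{1,7}$ works in every dimension $n+1 \ge 8$, with $s-k$ values $4$ and $6$.

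The main technical obstacle is to verify that $\mu_n$ is indeed a Lie bracket for every $n \ge 6$, i.e.\ that
\[
(\psi_{1,4} + \psi_{1,6}) \circ (\psi_{1,4} + \psi_{1,6}) = 0
\]
in dimension $n+1$. This is a direct but tedious computation in the Chevalley--Eilenberg complex of $L_{n+1}$: expanding the square, each cross term $\psi_{k,s} \circ \psi_{k',s'}$ is a cochain whose value on a triple of basis vectors can be read off from the explicit formulas for $\psi_{k,s}$, and the combination collapses to zero by elementary binomial identities such as $C_{j-k-1}^{k-i} + C_{j-k}^{k-i-1} = C_{j-k}^{k-i}$. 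The identity holds in the base case $n=6$ by the explicit $7$-dimensional example, and the general case follows by the same pointwise identity since the formulas for $\psi_{1,4}$ and $\psi_{1,6}$ are independent of the ambient dimension. Once this verification is in place, the corollary is immediate from Proposition~\ref{charanilp} and the dimensional invariant.
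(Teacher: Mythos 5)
Your proposal is correct, but it proves the corollary by a genuinely different construction than the paper. The paper exhibits an infinite family \emph{in a single fixed dimension}: the $9$-dimensional algebras $\mu=\mu_0+\psi_{1,4}+\alpha\psi_{2,6}+\psi_{2,8}+\frac{3\alpha^2}{\alpha+2}\psi_{3,8}$ with $\alpha\neq 0,-2$, which carry a $\Z_2$-grading by Proposition \ref{charanilp} (all second indices are even), are characteristically nilpotent because $\alpha\neq 0$, and are pairwise non-isomorphic for distinct $\alpha$ by an appeal to \cite{gomez}. You instead let the dimension grow, taking $\mu_n=\mu_0+\psi_{1,4}+\psi_{1,6}$ on $L_{n+1}$ for all $n\ge 6$; non-isomorphism is then free (different dimensions), so you avoid the external citation and the rather delicate coefficient $\frac{3\alpha^2}{\alpha+2}$, at the price of having to check the Jacobi identity in every dimension. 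That check is genuinely needed but is in fact much easier than your appeal to binomial identities suggests: for $k=1$ the explicit formulas give $\psi_{1,s}(e_i,e_j)=0$ whenever $i,j\ge 2$, so for any linear combination $\psi$ of the $\psi_{1,s}$ the composition $\psi\circ\psi$ vanishes identically (every term of the cyclic sum contains an inner bracket landing in $\langle e_4,\ldots,e_n\rangle$ paired with a vector of index $\ge 2$, or an inner bracket that is already zero). Characteristic nilpotency follows either from the criterion $s-k\neq s'-k'$ inside the proof of Proposition \ref{charanilp}, as you say, or from the sill-algebra criterion, since the sill algebra of $\mu_n$ is $\mu_0+\psi_{1,4}$. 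Note that your family is essentially the $k=2$ case of the family $\mu_0+\psi_{1,4}+\psi_{1,4+k}$ that the paper itself uses later to produce $\Z_k$-graded examples, so your route is fully consistent with the paper's own methods; the only thing the paper's version buys that yours does not is the stronger (though unneeded here) fact that infinitely many such algebras already exist in one fixed dimension.
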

\pf  We consider the $9$-dimensional filiform Lie algebras given by
$$\mu=\mu_0+\psi_{1,4}+\alpha \psi_{2,6}+\psi_{2,8}+\frac{3\alpha^2}{\alpha+2}\psi_{3,8}$$
with $\alpha \neq 0$ and $-2$. These Lie algebras admits a $\Z_2$-grading and for two different values of $\alpha$ we have non isomorphic Lie algebras \cite{gomez}. Moreover, since $\alpha \neq 0$, these Lie algebras are characteristically nilpotent.

\subsection{$\Z_k$-gradings, $ k >2$, of filiform characteristically nilpotent Lie algebras}.

We assume that $k >2$. A $\Z_k$-grading of $L_n$ is equivalent to one of the following
$$\Gamma_k^l: L_n=\sum_{i=2}^{l-1}[X_i]_k \oplus (\la X_1\ra \oplus [X_l]_k) \oplus  \sum_{j=l+1}^{k+1}[X_j]_k$$
where $l $ is a parameter satisfying $2 \leq l \leq k+1$. The homogeneous component of this grading corresponding to the identity of $\Z_k$ is $[X_{l-1}]_k$. Two cocycles $\psi_{h_1,s_1}$ and $\psi_{h_2,s_2}$ send an homogeneous component (in particular $[X_{l-1}]_k$) in another homogeneous component if and only if
$$s_1-2h_1=s_2-2h_2 (\mod \ k).$$
We deduce
\begin{proposition}
Any filiform characteristically nilpotent Lie algebra $\g$ such that $\gr(\g)$ is isomorphic to $L_{n+1}$ whose Lie multiplication is of the form
 $$\mu=\mu_0+\sum_{i\in I} a_{h_i,s_i}\psi_{h_i,s_i}$$
 with $$s_i-2h_i=s_j-2h_j (\mod \ k), \ k<n-2$$
 for any $i,j \in I$
admits a $\Z_k$-grading.
\end{proposition}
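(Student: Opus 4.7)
My strategy generalizes the $\Z_2$-case of Proposition~\ref{charanilp}. Fixing the basis $\{e_0,e_1,\ld,e_n\}$ with $\mu_0(e_0,e_i)=e_{i+1}$, I would parametrize those $\Z_k$-gradings of the underlying vector space of $\g$ that are compatible with $\mu_0$ by assigning degrees $d_i=\deg e_i\in\Z_k$. The relation $d_{i+1}=d_0+d_i$ forced by $\mu_0(e_0,e_i)=e_{i+1}$ yields the recursion $d_i=d_1+(i-1)d_0$ for $i\ge 1$; these are precisely the standard gradings $\Gamma_k^l$ of $L_{n+1}$ listed in Theorem~\ref{tL}.

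The next step is to translate compatibility of $\mu$ with such a grading into a condition on the cocycles $\psi_{h,s}$. Since every nonzero evaluation $\psi_{h,s}(e_i,e_j)$ is a scalar multiple of $e_{i+j+s-2h-1}$, homogeneity reduces to $d_i+d_j\equiv d_{i+j+s-2h-1}\pmod{k}$ for every admissible pair $(i,j)$. Substituting the formula $d_m=d_1+(m-1)d_0$, the $(i,j)$-dependence cancels on both sides and the requirement collapses to the single linear relation
$$
d_1\equiv (s-2h)\,d_0\pmod{k},
$$
which depends on the cocycle only through the residue of $s-2h$ modulo $k$.

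Under the hypothesis $s_i-2h_i\equiv c\pmod{k}$ for every $i\in I$, the choice $d_0=\bar 1$ and $d_1=\bar c$ simultaneously satisfies this single relation for all cocycles $\psi_{h_i,s_i}$ appearing in the sum. Consequently, the decomposition $\g=\bigoplus_{\tau\in\Z_k}\g_\tau$ with $\g_\tau=\la e_i\mid d_i=\tau\ra$ is a $\Z_k$-grading of $\mu=\mu_0+\sum_{i\in I}a_{h_i,s_i}\psi_{h_i,s_i}$, and the assumption $k<n-2$ guarantees that several distinct residues occur among $d_0,\ld,d_n$, so the grading is nontrivial.

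The only step that calls for real attention is verifying the degree equation for \emph{every} nonzero value of $\psi_{h,s}$ --- both the generic values $\psi_{h,s}(e_i,e_j)=(-1)^{h-i}C_{j-h-1}^{h-i}e_{i+j+s-2h-1}$ and the boundary cases $\psi_{h,s}(e_h,e_{h+1})=e_s$ and $\psi_{h,s}(e_h,e_j)=e_{j+s-h-1}$ for $h<j$. In each instance the target index has the form $i+j+s-2h-1$, so the same single relation $d_1=(s-2h)d_0$ governs all evaluations. This is the main obstacle, but it is a routine bookkeeping check rather than a conceptual one.
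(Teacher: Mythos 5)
Your proposal is correct and follows essentially the same route as the paper: both parametrize the candidate $\Z_k$-gradings of $L_{n+1}$ by $\deg e_m=d_1+(m-1)d_0$ and reduce homogeneity of $\psi_{h,s}$ (whose every nonzero value lies in $\la e_{i+j+s-2h-1}\ra$) to the single congruence $d_1\equiv (s-2h)\,d_0$ modulo $k$, so that all cocycles sharing the residue of $s-2h$ become simultaneously homogeneous. Your write-up merely makes explicit the bookkeeping (the boundary evaluations of $\psi_{h,s}$ and the choice $d_0=\overline{1}$) that the paper compresses into the assertion that two cocycles preserve the homogeneous components of $\Gamma_k^l$ if and only if $s_1-2h_1\equiv s_2-2h_2$ modulo $k$.
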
{
\begin{proposition}
For any $k$, there exists a $\Z_k$-graded characteristically nilpotent filiform Lie algebra.
\end{proposition}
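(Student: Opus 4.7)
The plan is to exhibit, for each $k\ge 3$, an explicit filiform Lie algebra satisfying both characteristic nilpotency and the hypotheses of the preceding proposition. The natural candidate is the $(k+5)$-dimensional algebra $\g$ with multiplication
\[
\mu = \mu_0 + \psi_{1,4} + \psi_{1,4+k}
\]
on the basis $\{e_0,e_1,\ldots,e_{k+4}\}$. Here $n = k+4$, both cocycles are defined, the constraint $k < n-2 = k+2$ of the preceding proposition holds, and the values $s_i - 2h_i$ equal $2$ and $2+k$, which coincide modulo $k$. (The case $k=2$ was already covered by the corollary above, and can in any case be handled by the analogous choice $\mu_0+\psi_{1,4}+\psi_{1,6}$.)

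The first step is to verify that $\mu$ is a Lie bracket, i.e.\ that $\psi\circ\psi = 0$ for $\psi = \psi_{1,4}+\psi_{1,4+k}$. The key observation is that each $\psi_{1,s}$ is nonzero only on pairs containing $e_1$ and that its image lies in $\mathrm{span}\{e_m : m\ge 4\}$. Thus in a composition $\psi_{1,s_1}(\psi_{1,s_2}(x,y),z)$ applied to three distinct basis elements, non-vanishing of the inner call forces $e_1\in\{x,y\}$, while non-vanishing of the outer call forces $z = e_1$---impossible. Hence $\psi\circ\psi = 0$. A quick check of the lower central series---the correction $e_{j+2+k}$ lies in $\g^{j+2+k}\subset\g^{j+2}$ and so vanishes in the graded quotient---then shows $\gr\g\cong L_{n+1}$.

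The main step is to prove that $\g$ is characteristically nilpotent. Any semisimple derivation $D$ preserves the central filtration and therefore descends to a diagonalizable map on $\g/\g^2$; after replacing $e_0,e_1$ by eigenvectors of this induced map (plus corrections in $\g^2$) one may assume $D(e_0) = \lambda_0 e_0$ and $D(e_1) = \lambda_1 e_1$. The relations $[e_0,e_i] = e_{i+1}$ then force $\lambda_i = (i-1)\lambda_0 + \lambda_1$ for $i\ge 2$. Applying $D$ to $[e_1,e_2] = e_4 + e_{4+k}$ and comparing coefficients of the two basis vectors separately produces the equations $\lambda_4 = \lambda_1 + \lambda_2$ and $\lambda_{4+k} = \lambda_1 + \lambda_2$, which simplify to $\lambda_1 = 2\lambda_0$ and $\lambda_1 = (k+2)\lambda_0$ respectively. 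Hence $k\lambda_0 = 0$, so $\lambda_0 = \lambda_1 = 0$ and $D = 0$.

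The main obstacle is the diagonalization step above: showing that a would-be semisimple derivation can be assumed to have $e_0,e_1$ as eigenvectors. This rests on the conjugacy of maximal tori in $\ag$ together with the availability of an adapted basis simultaneously diagonalizing a chosen maximal torus, both of which are guaranteed by the filiform structure (cf.\ Theorem \ref{tAUT} and the discussion in Section \ref{sAFLA}). Once characteristic nilpotency is established, the preceding proposition applies directly and yields the desired $\Z_k$-grading of $\g$.
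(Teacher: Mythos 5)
Your example is exactly the paper's: the $(k+5)$-dimensional algebra $\mu=\mu_0+\psi_{1,4}+\psi_{1,4+k}$, with the Jacobi check and the final appeal to the preceding proposition done the same way (your $\psi\circ\psi=0$ argument and the congruence $s-2h\equiv 2\pmod k$ are both fine). Where you diverge is the proof of characteristic nilpotency: the paper invokes the sill-algebra criterion ($\mu$ is not isomorphic to its sill algebra $\mu_0+\psi_{1,4}$ when $k\neq 0$), whereas you try to show directly that every semisimple derivation vanishes.

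That direct argument has a genuine gap at the diagonalization step. You may certainly choose eigenvectors $f_0,f_1$ of a semisimple $D$ lifting an eigenbasis of $\g/\g^2$, and the resulting adapted basis $f_0,f_1,f_2=[f_0,f_1],\ldots$ consists of eigenvectors; but the structure constants in this \emph{new} adapted basis need not be those of the old one. Your computation compares coefficients in $[e_1,e_2]=e_4+e_{4+k}$, and the contradiction $k\lambda_0=0$ comes precisely from the presence of \emph{two} nonzero components at positions $4$ and $4+k$. If in the eigenbasis the bracket $[f_1,f_2]$ had only one nonzero component $c_m f_m$ (or none), you would only get the single relation $(m-2)\lambda_0=\lambda_1$ and no contradiction. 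So the whole burden of the proof is to show that no change of adapted basis can collapse the two terms of $[e_1,e_2]$ into one --- which is exactly the assertion ``$\mu$ is not isomorphic to its sill algebra'' (equivalently, $\g\not\cong A_{k+5}^p$ for any $p$) that the paper takes from the sill-algebra machinery. Your proposed remedy --- conjugacy of maximal tori plus Theorem \ref{tAUT} --- does not supply this: solvability of $\ag$ and conjugacy of tori say nothing about which structure constants survive in an eigen-adapted basis. To close the gap you would either have to invoke the sill-algebra criterion as the paper does, or prove by hand (e.g.\ by tracking the leading terms of $[f_1,f_2]$ under a general triangular change $f_0=\sum a_ie_i$, $f_1=\sum b_ie_i$, or via the decomposition of $[\psi]$ into the graded pieces $H^2_1\oplus H^2_{k+1}$) that the components in positions $4$ and $4+k$ cannot both be removed.
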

\pf If fact, we consider the  filiform Lie algebra of dimension $n=k+5$ given by
$$\mu=\mu_0+\psi_{1,4}+\psi_{1,4+k}.$$
that is $$
\left\{
\begin{array}{l}
\mu(e_0,e_i)=e_{i+1}, \ i=2,\cdots,k+3,\\
\mu(e_1,e_2)=e_{4}+e_{k+4},\\
\mu(e_1,e_i)=e_{i+2}, \ i=3,\cdots,k+2.
\end{array}
\right.
$$
The Jacobi conditions are satisfied. The sill algebra is given by $\mu_0+\psi_{1,4}$ and is not isomorphic to $\mu$ as soon as $k \neq 0$. Then this Lie algebra is characteristically nilpotent and from the previous proposition, it is $\Z_k$-graded.

\end{document}